\crefname{hypothesis}{Hypothesis}{Hypotheses}
\newcommand{\tens}[1]{\boldsymbol{\mathcal{#1}}}
\newcommand{\tenselem}[1]{\mathcal{#1}}
\newcommand{\matr}[1]{\boldsymbol{#1}}
\newcommand{\vect}[1]{\boldsymbol{#1}}
\newcommand{\set}[1]{\mathscr{#1}}
\newcommand{\T}{{\sf T}}        
\renewcommand{\H}{{\sf H}}      
\newcommand{\rank}[1]{\mathop{\operator@font rank}\{#1\}}
\newcommand{\colrank}[1]{\mathop{\operator@font colrank}\{#1\}}
\newcommand{\krank}[1]{\mathop{\operator@font krank}\{#1\}}
\newcommand{\trace}[1]{\mathop{\operator@font tr}\{#1\}}
\newcommand{\Diag}[1]{\mathop{\operator@font Diag}\{#1\}}    
\newcommand{\diag}[1]{\mathop{\operator@font diag}\{#1\}}    
\newcommand{\Span}[1]{\mathop{\operator@font Span}\{#1\}}    
\newcommand{\argmin}{\mathop{\operator@font argmin}}
\newcommand{\offdiag}[1]{\mathop{\operator@font offdiag}\{#1\}}    
\newcommand{\Proj}[2]{\mathop{\operator@font Proj_{#1}}{#2}}
\newcommand{\ProjGrad}[2]{\mathop{{\operator@font grad} }#1(#2)}
\newcommand{\Hess}[2]{\mathrm{Hess}_{#2}{#1}}
\newcommand{\HessAppl}[3]{\mathrm{Hess}_{#2}{#1} [#3]}
\newcommand{\eqdef}{\stackrel{\sf def}{=}}
\newcommand{\RR}{\mathbb{R}}
\newcommand{\CC}{\mathbb{C}}
\newcommand{\NN}{\mathbb{N}}
\newcommand{\UN}[1]{\set{U}_{#1}}
\newcommand{\tUN}[1]{\widetilde{\set{U}}_{#1}}
\newcommand{\Gmat}[3]{\matr{G}^{(#1,#2,#3)}}
\newcommand{\contr}[1]{\mathop{\bullet_{#1}}}   
\newcommand{\TangV}[2]{\mathcal{V}_{#1}{#2}}
\newcommand{\TangH}[2]{\mathcal{H}_{#1}{#2}}
\newcommand{\HessPS}[3]{\mathfrak{D}^{(#1,#2)}_{#3}}
\newcommand{\Tang}[2]{\mathbf{T}_{#1}{#2}}
\newcommand{\TangBundle}[1]{\mathbf{T}{#1}}
\newcommand{\deriv}[2]{{\nabla}_{#2}  {#1}}   
\newcommand{\R}{\Re}
\newcommand{\I}{\Im}
\newcommand{\Utwo}{\Psi}
\newcommand{\hij}[3]{h_{(#1,#2), #3}}
\newcommand{\Gamij}[3]{\matr{\Gamma}^{(#1,#2,#3)}}
\newcommand{\RInner}[2]{\left\langle #1, #2\right\rangle_{\Re}}   
\newcommand{\GamTens}{\tens{F}}
\def\bbordermatrix#1{\begingroup \m@th
  \@tempdima 4.75\p@
  \setbox\z@\vbox{%
    \def\cr{\crcr\noalign{\kern2\p@\global\let\cr\endline}}%
    \ialign{$##$\hfil\kern2\p@\kern\@tempdima&\thinspace\hfil$##$\hfil
      &&\quad\hfil$##$\hfil\crcr
      \omit\strut\hfil\crcr\noalign{\kern-\baselineskip}%
      #1\crcr\omit\strut\cr}}%
  \setbox\tw@\vbox{\unvcopy\z@\global\setbox\@ne\lastbox}%
  \setbox\tw@\hbox{\unhbox\@ne\unskip\global\setbox\@ne\lastbox}%
  \setbox\tw@\hbox{$\kern\wd\@ne\kern-\@tempdima\left[\kern-\wd\@ne
    \global\setbox\@ne\vbox{\box\@ne\kern2\p@}%
    \vcenter{\kern-\ht\@ne\unvbox\z@\kern-\baselineskip}\,\right]$}%
  \null\;\vbox{\kern\ht\@ne\box\tw@}\endgroup}
\def\bsmallmatrix#1{\left[\begin{smallmatrix}#1\end{smallmatrix}\right]}
\newcommand{\ue}{\mathrm{e}}
\newcommand{\ui}{i}
\newcommand{\uI}{\matr{I}}
\title{Approximate matrix and tensor diagonalization \\  by unitary transformations: \\ convergence of Jacobi-type algorithms\thanks{Submitted to the editors DATE.
\funding{This work was funded in part by the ERC project ``DECODA'' no.320594, in the frame of the European program FP7/2007-2013,  by the National Natural Science Foundation of China (No.11601371), 
and  by  the   Agence Nationale de Recherche (ANR grant LeaFleT, ANR-19-CE23-0021).}}}
\author{Konstantin Usevich\thanks{Universit\'{e} de Lorraine, CNRS, CRAN, Nancy, France
  (\email{konstantin.usevich@univ-lorraine.fr}).}
\and Jianze Li\thanks{Shenzhen Research Institute of Big Data, The Chinese University of Hong Kong, Shenzhen, China 
  (\email{lijianze@gmail.com}).}
\and Pierre Comon\thanks{Univ. Grenoble Alpes, CNRS, Grenoble INP, GIPSA-Lab, France 
  (\email{pierre.comon@gipsa-lab.fr}).}}
\begin{document}
\maketitle

\begin{abstract}
We propose a gradient-based Jacobi algorithm for a class of maximization problems on the unitary group, with a focus on approximate diagonalization of complex matrices and tensors by unitary transformations.
We provide weak convergence results, and prove local linear convergence of this algorithm.
The convergence results also apply to the case of real-valued tensors.
\end{abstract}

\begin{keywords}
optimization on manifolds, unitary group, Givens rotations, approximate tensor diagonalization, \L{}ojasiewicz gradient inequality, local convergence
\end{keywords}

\begin{AMS}
90C30,53B21,53B20,15A69,65K10,65Y20
\end{AMS}

\section{Introduction}
In this paper, we consider the following optimization problem
\begin{equation}\label{eq-gene-problem}
\matr{U}_{\ast} = \arg\max_{\matr{U} \in \UN{n}} f(\matr{U}),
\end{equation}
where $\UN{n}$ is the unitary group and $f:\UN{n}\rightarrow\RR$
is  real differentiable. 
An important class of such  problems stems from approximate matrix and tensor diagonalization  in numerical linear algebra \cite{BunsBM93:simax},  signal processing \cite{comon2010handbook} and machine learning \cite{Anan14:latent}.

Jacobi-type algorithms are widely used for maximization of these cost functions.
Inspired by the classic Jacobi algorithm \cite{GoluV96:jhu} for the symmetric eigenvalue problem, they  proceed by successive Givens  rotations that update only a pair of columns of $\matr{U}$.
The popularity of these approaches is explained by low computational complexity of the updates.
Despite their popularity, their convergence  has not yet been studied thoroughly, except the   case of matrices
 \cite{GoluV96:jhu}  and a pair of commuting matrices \cite{BunsBM93:simax}.

For tensor problems in the real-valued case (orthogonal group), a  gradient-based Jacobi-type algorithm  was proposed in \cite{IshtAV13:simax}, and its weak convergence\footnote{every accumulation point is a stationary point.} was proved\footnote{The algorithm \cite{IshtAV13:simax}  was proposed for a particular problem of Tucker approximation, but the convergence result of \cite{IshtAV13:simax}  are valid  for arbitrary smooth functions, see discussion in \cite{LUC2018}.}
In \cite{LUC2018},  its global (single-point) convergence\footnote{i.e., for any starting point, the iterations converge to a single limit point. Note that global convergence does not imply convergence to a global minimum; also, the notion of ``global convergence'' often has a different meaning in the numerical linear algebra community \cite{HariB19:complex}.}   for joint real 3rd order tensor or matrix diagonalization was proved. The proof in \cite{LUC2018}  
based on the \L{}ojasiewicz gradient inequality,  a  popular tool for studying convergence properties of nonlinear optimization algorithms \cite{AbsMA05:sjo,Lage07:phd,SU15:pro,AttoBS13:convergence}, including various tensor approximation problems \cite{Usch15:pjo,Hu2018}.

In this paper, we address the complex-valued case \eqref{eq-gene-problem}, and focus on  tensor and matrix approximate diagonalization problems.
Unlike the real case, where the Givens rotations are univariate (``line-search'' type), in the complex case the updates correspond to maximization on a sphere (similar in spirit to subspace methods).
The main contributions of the paper are: (i) we generalize the algorithm of \cite{IshtAV13:simax} to the  complex case,  prove its weak  convergence, and find global rates of convergence based on the results of \cite{boumal2016global}; (ii)  we show that the local convergence can be studied by combining the tools of \L{}ojasiewicz gradient inequality, geodesic convexity and  recent results on \L{}ojasiewicz exponent for Morse-Bott functions.
In particular, local linear convergence holds for local maxima satisfying second order regularity conditions. 
One of the motivations for this work was that the case of the unitary group is not  common in the optimization literature, unlike the orthogonal group or other matrix manifolds \cite{Absil08:Optimization}.

The structure of the paper is as follows. 
In \Cref{sec:previous},  we recall the  cost functions of  interest, the principle of Jacobi-type algorithms,  present the gradient-based  algorithm and a summary of main results. 
\Cref{sec:unitary_geom},  contains all necessary facts for differentiation on the unitary group. 
\Cref{sec:deriv_forms} contains  expressions for the  first- and second-order  derivatives, as well as expressions for Jacobi rotations for  cost functions of interest. 
In \Cref{sec:weak}, we present the results on weak convergence and global convergence rates. 
The  results  of \cite{boumal2016global} are summarized  in the same section.
In \Cref{sec:Lojasiewicz}, we recall  results  based on  \L{}ojasiewicz gradient inequality, and  facts on Morse-Bott functions. 
\Cref{sec-main-results} contains main results  and  lemmas.

\section{Background, problem statement, and summary of results}\label{sec:previous}
\subsection{Main notation} 
For an $\matr{X} \in \CC^{m\times n}$, we denote by $\matr{X}^{*}$ its elementwise conjugate, and by $\matr{X}^{\T}$, $\matr{X}^{\H}$ its transpose and Hermitian transpose.
We   use the following notation for the real and imaginary parts  $\matr{X} =\matr{X}^{\R} + \ui \matr{X}^{\I}$  of matrices, and $\R(z)$, $\I(z)$  for  $z\in \CC$. 
Let $\mathbb{T} \subset \CC$ be the unit circle, and $\UN{n}\subset\CC^{n\times n}$ be the unitary group. 

In this paper, we make no distinction between tensors and multi-way arrays; for simplicity, we consider only fully contravariant tensors \cite{Mccu87}. 
For a tensor or a matrix $\tens{A} \in \CC^{n\times \cdots \times n}$, we denote by $\diag{\tens{A}} \in \CC^{n}$ the vector of  all the diagonal elements $\tenselem{A}_{ii\cdots i}$  and by $\trace{\tens{A}}$ the sum of the diagonal elements. 
We denote by $\|\cdot\|$ the Frobenius norm of a tensor/matrix,
or the Euclidean norm of a vector. 
For a $d$-th order tensor $\tens{A}\in \CC^{n\times \cdots \times n}$ its contraction on the $k$th index  with  $\vect{v} \in \CC^n$ (resp.  $\matr{M} \in \CC^{m\times n}$)  is 
\[
(\tens{A} \contr{k} \vect{v}) _{i_1..{\cancel{i_k}}..i_d}  \eqdef \sum_{{j}=1}^{{n}} \tens{A}_{i_1.. i_{k\text{-}1} {j}i_{k+1}.. i_d}  {v}_{{j}},\;
(\tens{A} \contr{k} \matr{M}) _{i_1..i_d}  \eqdef \sum_{{j}=1}^{{n}} \tens{A}_{i_1.. i_{k\text{-}1} {j}i_{k+1}.. i_d}  {M}_{i_k,{j}}.
\]
By writing multiple contractions $\tens{A}\contr{k_1} \vect{v_1} \ldots \contr{k_\ell} \vect{v}_{\ell}$ we assume that they are performed simultaneously, i.e.,  the indexing of the tensor does not change before contractions are complete.
For a matrix $\matr{S} \in \CC^{n\times n}$, we will also denote the double contraction as
\[
(\tens{A} \contr{k,\ell} \matr{S}) _{i_1..{\cancel{i_k}}..{\cancel{i_{\ell}}}...i_d}  \eqdef \sum_{{j,s}=1}^{{n,n}} \tens{A}_{i_1\ldots i_{k-1} {j}i_{k+1}\ldots  i_{\ell-1} {s}i_{\ell+1} \ldots i_d}  {S}_{j,s} \\
\]
For a matrix $\matr{U} \in \CC^{n\times n}$, we will denote its columns as  
$\matr{U}  = \begin{bmatrix} \vect{u}_1 & \cdots  & \vect{u}_n \end{bmatrix}$. 

\subsection{Motivation}\label{sec-cost-fn}
This paper is motivated by following maximization problems: 
\begin{enumerate}[(i)]
\item joint \emph{approximate Hermitian diagonalization  of matrices} 
$\matr{A}^{(\ell)} \in \CC^{n\times n}, 1\!\leq\!\ell\!\leq\!L$:
\begin{equation}\label{eq-cost-jade}
f(\matr{U}) =  \sum\limits_{\ell=1}^{L}\|\diag{\matr{U}^{\H}\matr{A}^{(\ell)}\matr{U}}\|^2 = \sum\limits_{\ell=1}^{L} \sum\limits_{p=1}^{n}|\vect{u}^{\H}_p\matr{A}^{(\ell)}\vect{u}_p|^2;
\end{equation}
\item \emph{approximate  diagonalization of a 3rd order tensor} 
 $\tens{A}\in\CC^{n\times n\times n}$: 
\begin{equation}\label{eq-cost-3-tensor}
f(\matr{U}) =  \|\diag{\tens{A} \contr{1} \matr{U}^{\H} \contr{2} \matr{U}^{\T}\contr{3} \matr{U}^{\T}}\|^2 = 
\sum\limits_{p=1}^{n}|\tens{A} \contr{1} \vect{u}^*_p \contr{2} \vect{u}_p \contr{3} \vect{u}_p|^2;
\end{equation}

\item \emph{approximate   diagonalization of a  4th order tensor} $\tens{B}\in\CC^{n\times n\times n\times n}$ satisfying  a
 Hermitian  symmetry condition  $\tenselem{B}_{ijkl}=\tenselem{B}^{*}_{klij}$  for  any $1\leq i,j,k,l\leq n$:
\begin{equation}\label{cost-4-order}
f(\matr{U}) = \trace{\tens{B} \contr{1} \matr{U}^{\H} \contr{2} \matr{U}^{\H}\contr{3} \matr{U}^{\T}\contr{4} \matr{U}^{\T}} =
\sum\limits_{p=1}^{n}\tens{B} \contr{1} \vect{u}^*_p \contr{2} \vect{u}^*_p \contr{3} \vect{u}_p \contr{4} \vect{u}_p.
\end{equation}
\end{enumerate}
Such maximization problems appear in blind source separation \cite{comon2010handbook} in the context of: 
\begin{enumerate}[(i)]
\item  joint diagonalization of covariance matrices \cite{cardoso1993blind,cardoso1996jacobi};
\item diagonalization of  the cumulant tensor \cite{de1997signal} with $\tenselem{A}_{ijk}=\text{Cum}(v_i,v_j^{*},v_k^{*})$;
\item  diagonalization  of  the cumulant tensor \cite{comon2001source} $\tenselem{A}_{ijkl}=\text{Cum}(v_i,v_j,v_k^{*},v_l^{*})$    of a complex random vector $\vect{v}$, which may itself stem from a Fourier transform \cite{EmilCL98:ieeesp}.
\end{enumerate}
\begin{remark}\label{rem:approx_diag}
Due to invariance of  $\|\cdot\|$ to unitary transformations, maximizing  \eqref{eq-cost-jade} or \eqref{eq-cost-3-tensor} is equivalent to minimizing  sums of squares of  the off-diagonal elements of the rotated tensors/matrices, hence the name
 ``approximate diagonalization''.
For example, in the single matrix case (i.e., \eqref{eq-cost-jade}  and $L=1$), we can equivalently minimize the squared norm of the off-diagonal elements (so called off-norm) 
\begin{equation}\label{eq-off-norm}
\|\text{off} (\matr{U}^{\H}\matr{A}\matr{U})\|^2 = \|\matr{A}\|^2 - \|\diag{\matr{U}^{\H}\matr{A}\matr{U}}\|^2,
\end{equation}
which is typically done in the numerical linear algebra community \cite{GoluV96:jhu}.
\end{remark}
In this paper, we consider a class of  functions that generalizes\footnote{It is easy to see that \eqref{cost-fn-general} generalizes \eqref{eq-cost-jade} (for $d_1 = \cdots = d_L = 2, t_1 = \cdots = t_L = 1$) and  \eqref{eq-cost-3-tensor} (for  $L = 1, d_1 = 3, t_1 = 1$).
} \eqref{eq-cost-jade}--\eqref{cost-4-order}.
For a set of tensors $\tens{A}^{(1)}, \ldots, \tens{A}^{(L)}$ of orders $d_1,\ldots, d_L$ (potentially different), integers $t_\ell$, $0 \le t_\ell \le d_\ell$, and  $\alpha_\ell \in \RR$ (possibly negative), we define the  cost function as
\begin{equation}\label{cost-fn-general}
f(\matr{U}) = \sum\limits_{\ell=1}^{L}
 \alpha_\ell \|\diag{\tens{A}^{(\ell)} \contr{1} \matr{U}^{\H}\cdots \contr{t_\ell} \matr{U}^{\H} \contr{t_\ell+1} \matr{U}^{\T} \cdots \contr{d_\ell} \matr{U}^{\T}}\|^2,
\end{equation}
i.e., a conjugate transformation is applied $t_\ell$ times and a non-conjugate $d_\ell - t_\ell$ times.
If all $\alpha_k > 0$, maximization of \eqref{cost-fn-general} can be viewed as joint diagonalization of several tensors (as in \Cref{rem:approx_diag}); the general case of negative $\alpha_k$ allows for more flexibility.
Also, \eqref{cost-fn-general} includes symmetric diagonalization problems (without conjugations), e.g.,
\[
 f(\matr{U}) = \sum\limits_{\ell=1}^{L}\|\diag{\matr{U}^{\T}\matr{A}^{(\ell)}\matr{U}}\|^2,\quad \text{or }
f(\matr{U}) =  \|\diag{\tens{A} \contr{1} \matr{U}^{\T} \contr{2} \matr{U}^{\T}\contr{3} \matr{U}^{\T}}\|^2;
\]
It can be shown that $f$ admits representation \eqref{cost-fn-general} (with $d=\max(d_1,\ldots,d_L)$) if and only if there exists a $2d$-th order tensor
$\tens{B}$ that is Hermitian \cite{NieY19:Hermitian}, i.e., 
\begin{equation}\label{eq:hermitian}
\tenselem{B}_{i_1\cdots i_d j_1 \cdots j_d} = \tenselem{B}^{*}_{ j_1 \cdots j_d i_1\cdots i_d}
\end{equation}
  such that $f$ has a representation which generalizes \eqref{cost-4-order}:
\begin{equation}\label{cost-fn-general-trace}
f(\matr{U}) = \trace{\tens{B} \contr{1} \matr{U}^{\H} \cdots \contr{d} \matr{U}^{\H}\contr{d+1} \matr{U}^{\T} \cdots \contr{2d} \matr{U}^{\T}}.
\end{equation}
The equivalence between \eqref{cost-fn-general} and \eqref{cost-fn-general-trace} is analogous to the spectral theorem for Hermitian matrices; a proof can be found  in  \Cref{sec:deriv_forms}  (see also \cite[Prop. 3.5]{JianL16:characterizing}).

\subsection{Jacobi-type methods}
Fix an index pair $(i,j)$ that satisfies $1\leq i<j\leq n$.
Then, for a matrix $\matr{\Utwo} \in \UN{2}$, we define the  \emph{plane transformation} in $\UN{n}$ as:
\begin{equation}\label{eq:givens_trans}
\Gmat{i}{j}{\matr{\Utwo}} = {\small
\bbordermatrix{
&  & & i& & j && \cr
&1 &       & & & &&\cr
 & &\ddots & & & & \matr{0} &\cr
 i && & \Utwo_{1,1} & & \Utwo_{1,2} && \cr
 & & & & \ddots & &&\cr
j & & & \Utwo_{2,1} & & \Utwo_{2,2} && \cr
&  & \matr{0} & & & & \ddots &\cr
& &       & & & &&1}, }
\end{equation}
which coincides with $\matr{I}_n$ except  the positions $(i,i), (i,j), (j,i), (j,j)$. 
The set of matrices $\Gmat{i}{j}{\matr{\Utwo}}$ is a subgroup of $\UN{n}$ that is  canonically  isomorphic to $\UN{2}$.

Jacobi-type methods aim at maximizing the cost function  by applying successive plane  transformations.
The  iterations $\{\matr{U}_k\}$  are  generated multiplicatively
\[
\matr{U}_k = \matr{U}_{k-1} \Gmat{i_k}{j_k}{\matr{\Utwo}_k},
\]
where  $(i_k,j_k)$ is chosen according to a certain rule, and $\matr{\Utwo}_k$ maximizes the restriction $\hij{i_k}{j_k}{\matr{U}_{k-1}}$ of $f$ defined as 
\begin{equation}\label{eq-func-h-new}
  \begin{split}
\hij{i}{j}{\matr{U}}:\quad&\UN{2} \longrightarrow \RR\\
& {\matr{\Utwo}}\longmapsto f(\matr{U}\Gmat{i}{j}{\matr{\Utwo}}).
\end{split}
\end{equation}
When maximizing $\hij{i}{j}{\matr{U}}$, we can only consider \emph{rotations}, i.e.,  $\matr{\Psi}= \matr{\Utwo}(c,s_1,s_2) =$  
\begin{align}
\matr{\Utwo}(c,s_1,s_2)
&= \begin{bmatrix}
 c & -s \\
  s^{\ast} & c
 \end{bmatrix}
= \begin{bmatrix}
 c & -(s_1+\ui s_2) \\
  s_1-\ui s_2 & c
 \end{bmatrix}= \begin{bmatrix}
 \cos\theta & -\sin\theta \ue^{\ui\phi} \\
  \sin\theta \ue^{-\ui\phi} & \cos\theta
 \end{bmatrix}
\label{unitary-para-2}
\end{align}
where $c\in \RR^{+}$, $s= s_1 + \ui s_2\in\CC$, $c^2+|s|^2 =1$. 
This is due to the fact that  \eqref{cost-fn-general} and \eqref{cost-fn-general-trace}  are  invariant under
multiplications of columns of $\matr{U}$ by  scalars  from $\mathbb{T}$,  i.e., 
\begin{equation}\label{eq:invariance_scaling}
f(\matr{U}) = f(\matr{U}\matr{S}), \quad \text{ for all } \matr{S} =  
\begin{bmatrix}z_1 & &  0 \\  & \ddots &  \\ 0 & & z_n \end{bmatrix},\quad z_i\in\mathbb{T}, 1\leq i\leq n.
\end{equation}
As in the  matrix case \cite{GoluV96:jhu}, we  refer to  $\Gmat{i}{j}{\matr{\Utwo}}$ with $\matr{\Utwo}$ of the form \eqref{unitary-para-2} as \emph{Givens rotations}, and to maximizers of $\hij{i}{j}{\matr{U}}$ as \emph{Jacobi rotations}.
As shown in \Cref{sec:deriv_forms}, for any cost function \eqref{cost-fn-general} or \eqref{cost-fn-general-trace} with $d\le 3$, maximization of  $\hij{i}{j}{\matr{U}}$  is equivalent\footnote{This fact is known for  special cases \eqref{eq-cost-jade}-\eqref{cost-4-order}. For  $d > 3$, a closed form solution does not exist in general in the complex case, as shown in  \Cref{sec:deriv_forms}, where a form of $\hij{i}{j}{\matr{U}}$ is derived for any $d$.} to finding the leading eigenvalue/eigenvector pair of a $3\times 3$ symmetric matrix;  hence  updates are very cheap.
Therefore, in this paper, we mostly focus on the case $d\le3$. 

A typical choice of pairs $(i_k,j_k)$ (used  in \cite{cardoso1993blind,cardoso1996jacobi,de1997signal,comon2001source}) is, e.g.,  \emph{cyclic-by-row},  
\begin{equation}\label{equation-Jacobi-C}
  \begin{split}
&(1,2) \to (1,3) \to \cdots \to (1,n) \to  (2,3) \to \cdots \to (2,n) \to  \cdots  \to  (n-1,n)  \to \\
&(1,2) \to (1,3) \to \cdots
  \end{split}
  \end{equation}
The convergence of the iterations for cyclic Jacobi  algorithms is unknown, except in the single matrix case\footnote{or a similar case of a pair of commuting matrices \cite{BunsBM93:simax}. These cases are special because,  the matrices can be always diagonalized (the minimal value of the off-norm is zero).} \cite{GoluV96:jhu}. 
Most of the results for the matrix case are on the convergence of $f(\matr{U}_k)$ to $\|\matr{A}\|^2$ (or the off-norm \eqref{eq-off-norm} to zero).
The rate is linear and asymptotically quadratic, for the cyclic strategies of choice of pairs and a class of other strategies,  see  \cite[\S 8.4.3]{GoluV96:jhu} and \cite{HariB17:cyclic,HariB19:complex} for an overview.
Moreover, the  result  \cite{Masc95:simax} guarantees that  in this case $\matr{U}_k^{\H} \matr{A} \matr{U}_k$ converges to a  diagonal matrix.
However, this implies convergence  of $\matr{U}_k$ to a limit point only if the eigenvalues of $\matr{A}$ are distinct 
(for multiple eigenvalues, convergence of subspaces is proved \cite{Drma10:simax}).
All these results are specific to matrices, and cannot be directly applied to our case.
Finally,  note that an extension of the Jacobi algorithm to compact Lie groups was proposed in \cite{Klei04:lie}, but their setup is  different: it is the notion of diagonality of a matrix that is generalized to Lie groups in \cite{Klei04:lie}, while we consider  higher-order cost functions.

\subsection{Jacobi-G algorithm and an overview of results}
Recently, a gradient-based Jacobi algorithm (Jacobi-G) was proposed  \cite{IshtAV13:simax}  in a context of optimization on orthogonal group.
Its weak  convergence  was shown in \cite{IshtAV13:simax}  and global convergence for real matrix and 3rd order tensor case was proved in \cite{LUC2018}.
 In this subsection, we introduce a complex generalization of the  Jacobi-G algorithm (\Cref{Jacobi-G-general}).
The main idea behind the algorithm is to  choose  Givens transformations that are  well aligned with  the Riemannian gradient\footnote{The definition of Riemannian gradient is postponed to \Cref{sec:unitary_geom}.} of $f$ denoted by $\ProjGrad{f}{\cdot}$.

\begin{algorithm}
\caption{General Jacobi-G algorithm}\label{Jacobi-G-general}
{\bf Input:} A differentiable $f:\UN{n}\rightarrow\RR$,  constant $0<\delta\le\sqrt{2}/n$,  starting point $\matr{U}_{0}$.\\
{\bf Output:} Sequence of iterations $\matr{U}_{k}$.
\begin{itemize}
\item {\bf For} $k=1,2,\ldots$ until a stopping criterion is satisfied do
\item\quad Choose an index pair  $(i_k,j_k)$  satisfying
\begin{equation}\label{inequality-gra-based}
\|\ProjGrad{\hij{i_k}{j_k}{\matr{\matr{U}_{k-1}}}}{\matr{I}_2}\| \geq \delta
\|\ProjGrad{f}{\matr{U}_{k-1}}\|.
\end{equation}
\item\quad Find $\matr{\Utwo}_k$ that maximizes $h_k(\matr{\Utwo})\eqdef\hij{i_k}{j_k}{\matr{U}_{k-1}}(\matr{\Utwo})$.
\item\quad Update $\matr{U}_k = \matr{U}_{k-1} \Gmat{i_k}{j_k}{\matr{\Utwo}_k}$.
\item {\bf End for}
\end{itemize}
\end{algorithm}

It is shown in Section~\ref{sec:deriv_forms} that  it is always possible to find  $(i_k,j_k)$ satisfying \eqref{inequality-gra-based}, provided   $\delta\le\sqrt{2}/n$ (the meaning of $\delta$ will be also explained).
Next, we summarize main results on convergence of \Cref{Jacobi-G-general} for \eqref{cost-fn-general} and \eqref{cost-fn-general-trace}, $d\le 3$.
\begin{itemize}
\item \Cref{theorem-local-gene}: we show  that, similarly to the algorithm of \cite{IshtAV13:simax}, the weak convergence takes place ($\ProjGrad{f}{\matr{U}_k} \to 0$), which implies that every accumulation point $\overline{\matr{U}}$ of the sequence $\{\matr{U}_k\}$ is a stationary point; moreover, we are able to retrieve global convergence rates along the lines of \cite{boumal2016global}.

\item \Cref{thm:accumulation_points}: if an accumulation point $\overline{\matr{U}}$ satisfies  regularity conditions (i.e., restrictions $\hij{i}{j}{\overline{\matr{U}}}$, for all $i < j$, have semi-strict local maxima at $\matr{I}_2$), then $\overline{\matr{U}}$ is the only limit point  of $\{\matr{U}_k\}$; if in addition, the rank of the Hessian at $\overline{\matr{U}}$ is maximal (i.e., equal to $n(n-1)$), then the speed of convergence is linear.

\item \Cref{thm:local_linear}: if $\matr{U}_*$ is a semi-strict local maximum of $f$, then \Cref{Jacobi-G-general} converges linearly to  $\matr{U}_*$ (or an equivalent point) when started at any point in a neighborhood of $\matr{U}_*$.
\end{itemize}

We  eventually provide in \Cref{sec:examples}   examples of tensor and matrix diagonalization problems where the regularity conditions are satisfied. 
In the results listed above, we use the notion of semi-strict local maximum due to   invariance   of the cost function  with respect to \eqref{eq:invariance_scaling}.
This makes the Riemannian Hessian rank-deficient (rank at most $n(n-1)$) at any stationary point, hence the maxima cannot be strict.
We use the following tools to overcome this issue:
\begin{itemize}
\item Morse-Bott property that generalizes  Morse property at a stationary point;
\item quotient manifold ${\tUN{n}}$: factorizing  $\UN{n}$ by the equivalence relation in \eqref{eq:invariance_scaling}. 
\end{itemize}

\section{Unitary group as a real manifold}\label{sec:unitary_geom}

This section contains all necessary facts about the unitary group, derivatives of the cost functions, geodesics, etc. 

\subsection{Wirtinger calculus}
First, we introduce the following real-valued inner product\footnote{In some literature \cite{abrudan2008steepest}, a different inner product $\frac{1}{2} \Re \left({\trace{\matr{X}^{\H}\matr{Y}}}\right)$ is adopted. We prefer a definition that is compatible with the  Frobenius norm  $\RInner{\matr{X}}{\matr{X}} = \|\matr{X}\|^2$ } on $\CC^{m\times n}$. For  $\matr{X} = \matr{X}^{\R}+ \ui\matr{X}^{\I},\matr{Y} = \matr{Y}^{\R}+ \ui\matr{Y}^{\I}\in \CC^{m\times n}$, we denote 
\begin{equation}\label{eq:RInner}
\RInner{\matr{X}}{\matr{Y}} \eqdef 
\langle\matr{X}^{\R},\matr{Y}^{\R} \rangle + \langle\matr{X}^{\I},\matr{Y}^{\I} \rangle  =
\Re \left({\trace{\matr{X}^{\H}\matr{Y}}}\right).
\end{equation}
This makes $\CC^{m\times n}$ a real Euclidean space  of dimension $2mn$. 

Note that a function  $f:\CC^{m\times n}\rightarrow\RR$ is never holomorphic unless it is  constant; therefore we do not require $f$ to be complex differentiable, but  differentiable with respect to the real and imaginary parts. 
We use a shorthand notation $\deriv{f}{\matr{X}^{\R}},\deriv{f}{\matr{X}^{\I}} \in \RR^{m\times n}$ for  matrix derivatives with respect to  real and imaginary parts of $\matr{X} \in \CC^{m\times n}$.
The Wirtinger derivatives   $\deriv{f}{\matr{X}},\deriv{f}{\matr{X}^*} \in \CC^{m\times n}$ are standardly defined  \cite{abrudan2008steepest,brandwood1983complex,krantz2001function} as
\begin{equation*}
\deriv{f}{\matr{X}} := \frac{1}{2}\left(\deriv{f}{\matr{X}^{\R}}- \ui\deriv{f}{\matr{X}^{\I}}\right), \quad
\deriv{f}{\matr{X}^*} := \frac{1}{2}\left(\deriv{f}{\matr{X}^{\R}}+ \ui\deriv{f}{\matr{X}^{\I}}\right).
\end{equation*}
The matrix Euclidean gradient of $f$ with respect to the inner product \eqref{eq:RInner} becomes
\begin{equation*}
\nabla^{(\R)} f(\matr{X}) = \deriv{f}{\matr{X}^{\R}}+ \ui\deriv{f}{\matr{X}^{\I}} = 2 \deriv{f}{\matr{X}^*}(\matr{X}).
\end{equation*}

\subsection{Riemannian gradient}
$\UN{n}$ can be viewed as an embedded real submanifold of $\CC^{n\times n}$ (see also \cite[Appendix C.2.6]{Hall15:lie}).  By \cite[\S 3.5.7]{Absil08:Optimization}, the tangent space to $\UN{n}$ is associated with an $n^2$-dimensional $\RR$-linear subspace  of $\CC^{n\times n}$:
\begin{equation*}
\mathbf{T}_{\matr{U}}\UN{n} = \{ \matr{X}\in\CC^{n\times n} :  \matr{X}^{\H} \matr{U} +  \matr{U}^{\H} \matr{X} = 0  \} =  \{ \matr{X}\in\CC^{n\times n} :  \matr{X} = \matr{U} \matr{Z}, \quad  \matr{Z} + \matr{Z}^{\H} = 0   \}.
\end{equation*} 
Recall that $\UN{n}$ is a matrix Lie group, with the Lie algebra of skew-Hermitian matrices $\mathfrak{u}(n) =  \{ \matr{Z}\in\CC^{n\times n} :  \matr{Z} + \matr{Z}^{\H} = 0   \}$ (which coincides with $\mathbf{T}_{\matr{I}_n}\UN{n}$ in our notation).
Then for $f: \CC^{n\times n} \to \RR$ differentiable in a neighborhood of $\UN{n}$,  the Riemannian gradient is just  the orthogonal projection of   $\nabla^{(\R)} f(\matr{U})$  on $\mathbf{T}_{\matr{U}}\UN{n}$: 
\begin{align}
&\ProjGrad{f}{\matr{U}}   = \matr{U} \matr{\Lambda}(\matr{U}) \in \mathbf{T}_{\matr{U}}\UN{n},\quad\text{where}\label{eq-riem-grad} \\
&\matr{\Lambda}(\matr{U})  =\frac{\matr{U}^{\H}
\nabla^{(\R)} f(\matr{U}) - (\nabla^{(\R)} f(\matr{U}))^{\H}\matr{U}}{2}  =\matr{U}^{\H}
\deriv{f}{\matr{U}^{*}}(\matr{U}) - (\deriv{f}{\matr{U}^{*}}(\matr{U}))^{\H}\matr{U}.\label{eq:Lambda}
\end{align}
Note that $\matr{\Lambda}(\matr{U})$ is a skew-Hermitian matrix,  i.e.,  
\begin{equation}\label{eq:Lambda-antihermitian}
\Lambda_{ij}(\matr{U})=-(\Lambda_{ji}(\matr{U}))^{*},\ \ \ 1\leq i,j\leq n.
\end{equation}
In what follows, we will  use the exponential map  \cite[p.102]{Absil08:Optimization} $\text{Exp}_{\matr{U}}: \mathbf{T}_{\matr{U}}\UN{n}  \to \UN{n}$,
which maps 1-dimensional lines in the tangent space to geodesics and is given by 
\begin{equation}\label{eq:exp_map}
\text{Exp}_{\matr{U}}(\matr{U}\Omega) = \matr{U} \exp(\Omega),
\end{equation}
where $\exp(\cdot)$ is the matrix exponential.
We will frequently use the following relation between $\text{Exp}_{\matr{U}}$ and the Riemannian gradient.
For any $\Delta \in \mathbf{T}_{\matr{U}}\UN{n}$, we have
\begin{equation}\label{eq:gradient_exp_map}
\RInner{\matr{\Delta}}{\ProjGrad{f}{\matr{U}}}  =
\left.\left( \frac{d}{dt} f (\text{Exp}_{\matr{U}}(t\matr{\Delta}) )\right)\right|_{t=0}. 
\end{equation}
We also need the following fact about the case of scale-invariant functions.
\begin{lemma}\label{lem:grad_scale_invariant}
 Assume that $f: \UN{n} \to \RR$ satisfies  the invariance property \eqref{eq:invariance_scaling}.
 Then for any $\matr{U} \in \UN{n}$ and  $\matr{S}$ as in \eqref{eq:invariance_scaling} it holds that
 \[
 \ProjGrad{f}{\matr{U}\matr{S}} =  \ProjGrad{f}{\matr{U}}{\matr{S}}.
 \]
\end{lemma}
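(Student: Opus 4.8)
The plan is to work directly from the closed form \eqref{eq-riem-grad}--\eqref{eq:Lambda} of the Riemannian gradient, since the only object that needs to be transported under the right translation $R_{\matr{S}}\colon\matr{U}\mapsto\matr{U}\matr{S}$ is the Wirtinger derivative $\deriv{f}{\matr{U}^{*}}$. As $\matr{S}$ is a constant diagonal matrix, the entrywise Wirtinger chain rule applied to $g(\matr{U})\eqdef f(\matr{U}\matr{S})$ gives $\deriv{g}{\matr{U}^{*}}(\matr{U})=\bigl(\deriv{f}{\matr{U}^{*}}(\matr{U}\matr{S})\bigr)\matr{S}^{*}$; the conjugate $\matr{S}^{*}$ (rather than $\matr{S}$) appears because, entrywise, $\partial(\matr{U}\matr{S})/\partial\matr{U}^{*}=0$ while $\partial(\matr{U}\matr{S})^{*}/\partial\matr{U}^{*}=\matr{S}^{*}$. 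The invariance \eqref{eq:invariance_scaling} means $g\equiv f$, so $\deriv{f}{\matr{U}^{*}}(\matr{U})=\bigl(\deriv{f}{\matr{U}^{*}}(\matr{U}\matr{S})\bigr)\matr{S}^{*}$, and multiplying on the right by $\matr{S}$ and using $\matr{S}^{*}\matr{S}=\matr{I}_{n}$ yields the transformation rule
\[
\deriv{f}{\matr{U}^{*}}(\matr{U}\matr{S}) \;=\; \Bigl(\deriv{f}{\matr{U}^{*}}(\matr{U})\Bigr)\matr{S}.
\]

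Substituting this into \eqref{eq:Lambda} together with $(\matr{U}\matr{S})^{\H}=\matr{S}^{\H}\matr{U}^{\H}$ is then a one-line computation giving $\matr{\Lambda}(\matr{U}\matr{S})=\matr{S}^{\H}\matr{\Lambda}(\matr{U})\matr{S}$, whence, by \eqref{eq-riem-grad},
\[
\ProjGrad{f}{\matr{U}\matr{S}} = (\matr{U}\matr{S})\,\matr{\Lambda}(\matr{U}\matr{S}) = \matr{U}\matr{S}\matr{S}^{\H}\matr{\Lambda}(\matr{U})\matr{S} = \matr{U}\matr{\Lambda}(\matr{U})\matr{S} = \ProjGrad{f}{\matr{U}}\matr{S},
\]
which is the claim. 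As a consistency check, $\matr{S}^{\H}\matr{\Lambda}(\matr{U})\matr{S}$ is again skew-Hermitian (as forced by \eqref{eq:Lambda-antihermitian}), so the right-hand side genuinely lies in $\mathbf{T}_{\matr{U}\matr{S}}\UN{n}$.

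The only delicate point is the Wirtinger chain rule in the first step — specifically, tracking the conjugations so that $\matr{S}^{*}$, and not $\matr{S}$, appears when differentiating with respect to $\matr{U}^{*}$. A coordinate-free alternative that bypasses this is to use the variational characterization \eqref{eq:gradient_exp_map}: every $\matr{\Delta}\in\mathbf{T}_{\matr{U}\matr{S}}\UN{n}$ can be written as $\matr{\Delta}=\matr{\Delta}'\matr{S}$ with $\matr{\Delta}'\in\mathbf{T}_{\matr{U}}\UN{n}$ (a bijection between the two tangent spaces, with inverse $\matr{\Delta}'\mapsto\matr{\Delta}'\matr{S}$); using \eqref{eq:exp_map} one checks $\text{Exp}_{\matr{U}\matr{S}}(t\matr{\Delta})=\text{Exp}_{\matr{U}}(t\matr{\Delta}')\,\matr{S}$ by conjugating the matrix exponential by $\matr{S}$, so \eqref{eq:invariance_scaling} gives $f\bigl(\text{Exp}_{\matr{U}\matr{S}}(t\matr{\Delta})\bigr)=f\bigl(\text{Exp}_{\matr{U}}(t\matr{\Delta}')\bigr)$; differentiating at $t=0$ and applying \eqref{eq:gradient_exp_map} twice yields $\RInner{\matr{\Delta}}{\ProjGrad{f}{\matr{U}\matr{S}}}=\RInner{\matr{\Delta}'}{\ProjGrad{f}{\matr{U}}}=\RInner{\matr{\Delta}}{\ProjGrad{f}{\matr{U}}\matr{S}}$ for all $\matr{\Delta}$, and non-degeneracy of $\RInner{\cdot}{\cdot}$ on $\mathbf{T}_{\matr{U}\matr{S}}\UN{n}$ completes the proof. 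I would present the direct computation from \eqref{eq:Lambda} as the main argument.
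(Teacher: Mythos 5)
Your proof is correct and follows the same route as the paper: compute the transformation rule for the Wirtinger derivative $\deriv{f}{\matr{U}^{*}}$ under $\matr{U}\mapsto\matr{U}\matr{S}$ and substitute into \eqref{eq-riem-grad}--\eqref{eq:Lambda}. The paper compresses the chain-rule-plus-invariance argument into a single line ``$\deriv{f}{\matr{U}^{*}}(\matr{U}\matr{S})=\bigl(\deriv{f}{\matr{U}^{*}}(\matr{U})\bigr)\matr{S}$'', whereas you helpfully unpack the intermediate step in which $\matr{S}^{*}$ appears from the Wirtinger chain rule before invariance and $\matr{S}^{*}\matr{S}=\matr{I}_n$ convert it to $\matr{S}$; the conclusion is identical, and your variational alternative, while not needed, is also sound.
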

\begin{proof}
By  the chain rule, we have  $\deriv{f}{\matr{U}^*}(\matr{U}\matr{S}) = \left(\deriv{f}{\matr{U}^*}(\matr{U})\right)\matr{S}$. Therefore,
\[
\ProjGrad{f}{\matr{U}\matr{S}}  = 
\matr{U}\matr{S}
\left(\matr{S}^{\H}\matr{U}^{\H} \deriv{f}{\matr{U}^{*}}(\matr{U})\matr{S} - (\deriv{f}{\matr{U}^{*}}(\matr{U})\matr{S})^{\H}\matr{U}\matr{S}\right) = \matr{U} \matr{\Lambda}(\matr{U}) \matr{S},
\] 
where the last equality follows from  $\matr{S}\matr{S}^{\H} = \matr{I}$.
\end{proof}

\subsection{Derivatives for elementary rotations}
This section contains general facts about derivatives of $\hij{i}{j}{\matr{U}}$. 
First, for  $i \neq j$ we introduce a useful projection operator $\mathcal{P}_{i,j}: \CC^{n\times n} \to\CC^{2\times 2}$ that extracts a  submatrix of  $\matr{X} \in \CC^{n\times n}$ as follows:
\begin{equation}\label{eq:restriction}
\mathcal{P}_{i,j} (\matr{X}) = \begin{bmatrix}
{X}_{ii} & {X}_{ij} \\
{X}_{ji} & {X}_{jj}
 \end{bmatrix}.
 \end{equation}
 Its adjoint operator is $\mathcal{P}_{i,j}^{\T} : \CC^{2\times 2} \to \CC^{n\times n}$,  i.e.,  
 \begin{equation}\label{eq:restriction_adjoint}
 \mathcal{P}_{i,j}^{\T} \left( \begin{bmatrix} a & c \\ b & d \end{bmatrix}\right) =
  { \bbordermatrix{
  & & i& & j & \cr
   &\matr{0} & \vdots & & \vdots & \matr{0}&\cr
  i &\cdots &  a & &  c & \cr
   & & &  & &\cr
 j  &\cdots &  b & &  d & \cr
   & \matr{0} & & & & \matr{0} }}.
 \end{equation}
Note that   for the Givens transformation in \eqref{eq:givens_trans} we have 
\begin{equation*}
\mathcal{P}_{i,j} (\Gmat{i}{j}{\matr{\Utwo}}) = \matr{\Utwo},
\end{equation*}
which makes it easy to express  the Riemannian gradient of  $\hij{i}{j}{\matr{U}}$  through that of $f$.
\begin{lemma}\label{lem:ProjGradSubmatrix}
The Riemannian gradient of  $\hij{i}{j}{\matr{U}}$ defined in \eqref{eq-func-h-new}  at the identity matrix $\matr{I}_2$ is a submatrix of the matrix $\matr{\Lambda}(\matr{U})$ defined in \eqref{eq:Lambda}:
\begin{align}
\ProjGrad{\hij{i}{j}{\matr{U}}}{\matr{I}_2} = \mathcal{P}_{i,j} (\matr{\Lambda}(\matr{U}))
= \begin{bmatrix}
 \Lambda_{ii}(\matr{U}) &
 \Lambda_{ij}(\matr{U}) \\
 \Lambda_{ji}(\matr{U})  &  \Lambda_{jj}(\matr{U}) 
 \end{bmatrix}.\label{eq-gradient-h}
\end{align}
\end{lemma}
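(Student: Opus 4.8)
The plan is to verify the defining variational property \eqref{eq:gradient_exp_map} of the Riemannian gradient directly, thereby reducing the gradient of $\hij{i}{j}{\matr{U}}$ at $\matr{I}_2$ to that of $f$ at $\matr{U}$. Fix an arbitrary tangent vector $\matr{\Delta}\in\Tang{\matr{I}_2}{\UN{2}}=\mathfrak{u}(2)$, i.e.\ a skew-Hermitian $2\times2$ matrix. By \eqref{eq:exp_map} the geodesic through $\matr{I}_2$ in the direction $\matr{\Delta}$ is $t\mapsto\exp(t\matr{\Delta})$, so it suffices to compute $\frac{d}{dt}f\big(\matr{U}\,\Gmat{i}{j}{\exp(t\matr{\Delta})}\big)\big|_{t=0}$ and to put it in the form $\RInner{\matr{\Delta}}{\,\cdot\,}$.

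The crucial step is the identity $\Gmat{i}{j}{\exp(t\matr{\Delta})}=\exp\big(t\,\mathcal{P}_{i,j}^{\T}(\matr{\Delta})\big)$: the plane-transformation map $\matr{\Psi}\mapsto\Gmat{i}{j}{\matr{\Psi}}$ is the canonical group isomorphism of $\UN{2}$ onto the plane subgroup of $\UN{n}$, hence it commutes with the matrix exponential, and $\mathcal{P}_{i,j}^{\T}(\matr{\Delta})$ from \eqref{eq:restriction_adjoint} is exactly the skew-Hermitian $n\times n$ matrix whose $(i,i),(i,j),(j,i),(j,j)$ entries form the block $\matr{\Delta}$. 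Consequently $\matr{U}\,\Gmat{i}{j}{\exp(t\matr{\Delta})}=\text{Exp}_{\matr{U}}\big(t\,\matr{U}\mathcal{P}_{i,j}^{\T}(\matr{\Delta})\big)$ with $\matr{U}\mathcal{P}_{i,j}^{\T}(\matr{\Delta})\in\Tang{\matr{U}}{\UN{n}}$, and applying \eqref{eq:gradient_exp_map} at $\matr{U}$ gives
\[
\left.\frac{d}{dt}\hij{i}{j}{\matr{U}}(\exp(t\matr{\Delta}))\right|_{t=0}
=\RInner{\matr{U}\mathcal{P}_{i,j}^{\T}(\matr{\Delta})}{\ProjGrad{f}{\matr{U}}}
=\RInner{\matr{U}\mathcal{P}_{i,j}^{\T}(\matr{\Delta})}{\matr{U}\matr{\Lambda}(\matr{U})}.
\]
Two elementary observations then finish the computation: the inner product \eqref{eq:RInner} is invariant under left multiplication by the unitary $\matr{U}$, so the right-hand side equals $\RInner{\mathcal{P}_{i,j}^{\T}(\matr{\Delta})}{\matr{\Lambda}(\matr{U})}$; and $\mathcal{P}_{i,j}$ is the adjoint of $\mathcal{P}_{i,j}^{\T}$ with respect to \eqref{eq:RInner}, so this in turn equals $\RInner{\matr{\Delta}}{\mathcal{P}_{i,j}(\matr{\Lambda}(\matr{U}))}$.

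Combining the above, $\RInner{\matr{\Delta}}{\ProjGrad{\hij{i}{j}{\matr{U}}}{\matr{I}_2}}=\RInner{\matr{\Delta}}{\mathcal{P}_{i,j}(\matr{\Lambda}(\matr{U}))}$ for every $\matr{\Delta}\in\mathfrak{u}(2)$. Since \eqref{eq:Lambda-antihermitian} guarantees that $\mathcal{P}_{i,j}(\matr{\Lambda}(\matr{U}))$ is skew-Hermitian, it lies in $\Tang{\matr{I}_2}{\UN{2}}$, and uniqueness of the tangent vector representing the differential yields \eqref{eq-gradient-h}. The only point requiring genuine care is the exponential identity $\Gmat{i}{j}{\exp(t\matr{\Delta})}=\exp(t\mathcal{P}_{i,j}^{\T}(\matr{\Delta}))$; if one prefers not to invoke Lie-theoretic generalities, it follows from a direct power-series computation using $\mathcal{P}_{i,j}^{\T}(\matr{\Delta})^{m}=\mathcal{P}_{i,j}^{\T}(\matr{\Delta}^{m})$ for all $m\ge1$, which holds because products of matrices supported on the $\{i,j\}$ rows and columns remain supported there.
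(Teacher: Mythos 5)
Your proof is correct and follows essentially the same route as the paper's: both invoke the variational characterization \eqref{eq:gradient_exp_map} of the Riemannian gradient, rewrite $\matr{U}\Gmat{i}{j}{\text{Exp}_{\matr{I}_2}(t\matr{\Delta})}$ as $\text{Exp}_{\matr{U}}(t\matr{U}\mathcal{P}_{i,j}^{\T}(\matr{\Delta}))$, and then pass from $\ProjGrad{f}{\matr{U}}$ to $\mathcal{P}_{i,j}(\matr{\Lambda}(\matr{U}))$ via the adjointness of $\mathcal{P}_{i,j}^{\T}$ and $\mathcal{P}_{i,j}$ with respect to \eqref{eq:RInner}. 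You have simply made explicit the intermediate steps that the paper leaves implicit (the exponential identity $\Gmat{i}{j}{\exp(t\matr{\Delta})}=\exp(t\mathcal{P}_{i,j}^{\T}(\matr{\Delta}))$, the unitary invariance of the inner product, and the fact that $\mathcal{P}_{i,j}(\matr{\Lambda}(\matr{U}))$ is itself a valid tangent vector at $\matr{I}_2$).
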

\begin{proof}
Denote $h = \hij{i}{j}{\matr{U}}$ for simplicity.
For any $\matr{\Delta} \in \Tang{\matr{I}_2}{\UN{2}}$, by \eqref{eq:gradient_exp_map}
\begin{equation*}
\begin{split}
&\RInner{\matr{\Delta}}{\ProjGrad{h}{\matr{I}_2}}  =
\left.\left( \frac{d}{dt} h (\text{Exp}_{\matr{I}_2}(t\matr{\Delta}) )\right)\right|_{t=0} = 
\left.\left( \frac{d}{dt} f (\matr{U} \Gmat{i}{j}{\text{Exp}_{\matr{I}_2}(\matr{\Delta} t)} )\right)\right|_{t=0} \\
&= \left.\left( \frac{d}{dt} f (\text{Exp}_{\matr{U}} (\matr{U} \mathcal{P}_{i,j}^{\T}(\matr{\Delta} )t )) \right)\right|_{t=0} 
 = \RInner{\matr{U} \mathcal{P}_{i,j}^{\T}(\matr{\Delta})}{\ProjGrad{f}{\matr{U}}} =
\RInner{\matr{\Delta}}{\mathcal{P}_{i,j} (\matr{\Lambda}(\matr{U}))},
\end{split}
\end{equation*}
which completes the proof.
\end{proof}

\subsection{Quotient manifold}\label{sec:quotient}

In order to handle scale invariance, it is often convenient to work on the quotient manifold. We define  the action of $\mathbb{T}^{n}$ on $\UN{n}$ as 
\begin{equation*}
\matr{U} \cdot (z_1,\ldots, z_n) = \matr{U} \begin{bmatrix}z_1 & &  0 \\  & \ddots &  \\ 0 & & z_n \end{bmatrix}.
\end{equation*} 
Since the action of $\mathbb{T}^{n}$ on $\UN{n}$ is free and proper, the quotient manifold ${\tUN{n}} = \UN{n}/\mathbb{T}^{n}$ is well-defined.
In order to define the gradient and Hessians on ${\tUN{n}}$, we use the standard splitting into vertical and horizontal space
\begin{equation*}
\Tang{\matr{U}}{\UN{n}}  = \TangV{\matr{U}}{\UN{n}} \oplus \TangH{\matr{U}}{\UN{n}}, 
\end{equation*}
where $\TangH{\matr{U}}{\UN{n}}$ contains the skew-symmetric matrices with zero diagonal:
\begin{equation*}
\TangH{\matr{U}}{\UN{n}} = \{ \matr{X}\in\CC^{n\times n} :  \matr{X} = \matr{U} \matr{Z}, \quad  \matr{Z} + \matr{Z}^{\H} = 0, \quad \diag{\matr{Z}} = \vect{0}   \}.
\end{equation*}
An element $\widetilde{\matr{U}} \in {\tUN{n}}$  is then  represented by $\matr{U}$ and the tangent space $\Tang{\widetilde{\matr{U}}} {{\tUN{n}}}$ is identified with $\TangH{{\matr{U}}}{{\UN{n}}}$, see \cite[\S 3.5.8]{Absil08:Optimization}.
Moreover, the Riemannian metric on ${\tUN{n}}$  is  defined as
\begin{equation*}
\langle\widetilde{\xi}, \widetilde{\eta}\rangle_{\Tang{\widetilde{\matr{U}}} {{\tUN{n}}}} = \langle\xi, \eta\rangle_{\Tang{{\matr{U}}} {{\UN{n}}}},
\end{equation*}
because the inner product  is invariant with respect to the choice of representative $\matr{U}$,  see  \cite[Section 3.6.2]{Absil08:Optimization}. 
This makes ${\tUN{n}}$ a Riemannian manifold;
the natural projection $\pi: \matr{U} \mapsto \widetilde{\matr{U}}$ then becomes a Riemannian submersion.

Due to the invariance property \eqref{eq:invariance_scaling}, the function $f$ is, in fact, defined on ${\tUN{n}}$ (we will denote the corresponding function  by  $\widetilde{f}: {\tUN{n}} \to\RR$).
\begin{remark}\label{lem-zero-diagonal}
 As shown in \cite[eqn. (3.39)]{Absil08:Optimization}, for any  $f$ satisfying the scale invariance property, we have    \eqref{eq:invariance_scaling}, $\ProjGrad{f}{\matr{U}} \in  \TangH{\matr{U}}{\UN{n}}$ (which naturally represents the gradient of $\widetilde{f}$ in ${\tUN{n}}$).
Therefore, in particular, the main diagonal of $\matr{\Lambda}({\matr{U}})$ is zero.
\end{remark}

\begin{remark}\label{exp-map-quotient}
Note that as in \cite[Thm. A.15]{MassA20:quotient}, for any $\matr{Z} \in \TangH{\matr{U}}{\UN{n}}$  the geodesic 
\begin{equation}\label{eq-exp-map-quotient}
\gamma(t) = \text{Exp}_{\matr{U}}(\matr{Z} t)
\end{equation}
is horizontal  (i.e. its derivative stays in the horizontal space $\dot{\gamma}(t) \in \TangH{\gamma(t)}{\UN{n}}$).
Thus the exponential map in the quotient manifold ${\tUN{n}}$ is also defined by \eqref{eq-exp-map-quotient}.
\end{remark}
Finally, we make  remarks about the two-dimensional manifold of $2\times 2$ rotations ${\tUN{2}}$.
\begin{remark}
The matrices $\matr{\Utwo}(c,s_1,s_2)$ defined in \eqref{unitary-para-2}, in fact, parametrize  ${\tUN{2}}$.
\end{remark}
\begin{remark}
Since all the elements on the diagonals are zero, the tangent space $\Tang{\widetilde{\matr{U}}} {{\tUN{n}}}$ to the $n(n-1)$-dimensional manifold ${\tUN{n}}$ can be decomposed as a direct sum of $\frac{n(n-1)}{2}$ copies of $\Tang{\widetilde{\matr{I}_2}} {{\tUN{2}}}$ (spaces of $2\times2$ skew-symmetric matrices with zero diagonal corresponding to different pairs $(i,j)$);
this can be also seen from \Cref{lem:ProjGradSubmatrix}.
\end{remark}

\subsection{Riemannian Hessian and stationary points}
For a Riemannian manifold $\mathcal{M}$ and a $C^2$ function $f: \mathcal{M} \to \RR$, the  Riemannian Hessian at $x\in\mathcal{M}$ is either defined as a linear map $\mathbf{T}_{x}\mathcal{M} \to \mathbf{T}_{x}\mathcal{M}$ or as a bilinear form on $\mathbf{T}_{x}\mathcal{M}$; the usual definition is based on the Riemannian connection \cite[p.105]{Absil08:Optimization}. 

For our purposes, for simplicity, we assume that the exponential map $\text{Exp}_{\matr{x}}: \mathbf{T}_{x}\mathcal{M} \to \mathcal{M}$ is given, and adopt the following definition based on \cite[Proposition 5.5.4]{Absil08:Optimization}.
The Riemannian Hessian $\Hess{f}{x}$ is the linear map $\mathbf{T}_{x}\mathcal{M} \to \mathbf{T}_{x}\mathcal{M}$  defined by
\[
\Hess{f}{x} = \Hess{(f \circ\text{Exp}_{\matr{x}})}{\matr{0}_x},
\]
where $\matr{0}_x$ is the origin in the tangent space, and $\Hess{g}{\matr{0}_x}$ is the Euclidean Hessian of $g: \mathbf{T}_{x}\mathcal{M}  \to \RR$.
Hence, similarly to \eqref{eq:gradient_exp_map}, there is the following expression for the values of Riemannian Hessian as a quadratic form at $\matr{\Delta} \in \mathbf{T}_{x}\mathcal{M}$:
\begin{equation}\label{eq:hessian_quad_form}
\RInner{\HessAppl{f}{x}{\matr{\Delta}}}{\matr{\Delta}} = 
\left.\left( \frac{d^2}{dt^2} f (\text{Exp}_{x}(t\matr{\Delta}) )\right)\right|_{t=0}. 
\end{equation}
The Riemannian  Hessian gives  necessary  and sufficient conditions of local extrema (see, for example, \cite[Theorem 4.1]{rapcsak1991}).
\begin{itemize}
\item If $x$ is a local maximum of $f$ on $\mathcal{M}$, then $\Hess{f}{x} \preceq 0$ (negative semidefinite);
\item If $\ProjGrad{f}{x} = 0$ and $\Hess{f}{x}  \prec_{\mathbf{T}_{x}\mathcal{M}} 0$  (i.e., $\Hess{f}{x}  \preceq 0$  and $\rank{\Hess{f}{x}} = \dim (\mathcal{M})$), then $f$ has a strict local maximum at $x$.
\end{itemize}
Finally, we distinguish stationary points with nonsingular Riemannian Hessian.
\begin{definition}\label{def:nondegenerate}
A stationary point ($x \in \mathcal{M}$, $\ProjGrad{f}{x} = 0$) is called \emph{non-degenerate} if $\Hess{f}{x}$ is nonsingular on $\mathbf{T}_{x}\mathcal{M}$.
\end{definition}
In our case, a stationary point is never non-degenerate, as shown below.  
\begin{lemma}\label{lem:f_invariant_Hessian}
 Assume that $f: \UN{n} \to \RR$ satisfies  the invariance property \eqref{eq:invariance_scaling}. 
Let $\matr{U}$ be a stationary point ($\ProjGrad{f}{\matr{U}} = 0$) and
\begin{equation}\label{eq:Z_k}
\matr{Z}_k = \begin{bmatrix} \vect{0} &\cdots & \vect{0} & \ui \vect{u}_k &\vect{0} &\cdots &\vect{0} \end{bmatrix}=  \matr{U} \matr{\Omega}_k \in \mathbf{T}_{\matr{U}}   \UN{n},
\end{equation}
where $\matr{\Omega}_k = i \vect{e}_k \vect{e}_k^{\T}$ (where $\vect{e}_k$ is the k-th unit vector).
Then  $\HessAppl{f}{\matr{U}}{\matr{Z}_k} = \vect{0}$ (i.e., all  $\matr{Z}_k$ are in the kernel of $\Hess{f}{\matr{U}}$). 
In particular, 
 $\rank{\Hess{f}{\matr{U}}} \le n(n-1)$. 
\end{lemma}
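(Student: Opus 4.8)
The plan is to exploit the scale invariance \eqref{eq:invariance_scaling} directly along the one-parameter subgroups generated by the $\matr{\Omega}_k$. First I would note that for fixed $k$, the matrix $\exp(t\matr{\Omega}_k) = \exp(\ui t \vect{e}_k \vect{e}_k^{\T})$ is precisely a diagonal matrix $\matr{S}(t)$ of the form appearing in \eqref{eq:invariance_scaling}: it has $\ue^{\ui t}$ in the $(k,k)$ position and $1$ elsewhere, so all its diagonal entries lie on $\mathbb{T}$. Hence $\text{Exp}_{\matr{U}}(t\matr{Z}_k) = \matr{U}\exp(t\matr{\Omega}_k) = \matr{U}\matr{S}(t)$ by \eqref{eq:exp_map}, and by the invariance property $f(\text{Exp}_{\matr{U}}(t\matr{Z}_k)) = f(\matr{U}\matr{S}(t)) = f(\matr{U})$ for all $t$. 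Therefore $t \mapsto f(\text{Exp}_{\matr{U}}(t\matr{Z}_k))$ is constant.

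Then the second derivative of this constant function at $t=0$ vanishes, so by the Hessian formula \eqref{eq:hessian_quad_form} we get $\RInner{\HessAppl{f}{\matr{U}}{\matr{Z}_k}}{\matr{Z}_k} = 0$. To upgrade this to $\HessAppl{f}{\matr{U}}{\matr{Z}_k} = \vect{0}$, I would use that $\matr{U}$ is a stationary point, so $\Hess{f}{\matr{U}}$ is a symmetric (self-adjoint with respect to $\RInner{\cdot}{\cdot}$) operator on $\mathbf{T}_{\matr{U}}\UN{n}$; a symmetric operator that sends $\matr{Z}_k$ to something orthogonal to $\matr{Z}_k$ with zero quadratic form need not itself kill $\matr{Z}_k$, so I need a bit more. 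The cleaner route: the polarization/bilinear form $\RInner{\HessAppl{f}{\matr{U}}{\matr{Z}_k}}{\matr{\Delta}}$ equals the mixed second derivative $\partial_s\partial_t f(\text{Exp}_{\matr{U}}(t\matr{Z}_k + s\matr{\Delta}))|_{0}$ up to the usual correction terms, but a more robust argument is to observe that the orbit map $\matr{S} \mapsto f(\matr{U}\matr{S})$ being constant on the whole torus $\mathbb{T}^n$ means $f$ is constant on the submanifold $\matr{U}\cdot\mathbb{T}^n \subset \UN{n}$, whose tangent space at $\matr{U}$ is exactly $\sspan\{\matr{Z}_1,\ldots,\matr{Z}_n\}$ (the vertical space $\TangV{\matr{U}}{\UN{n}}$). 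Since $f$ restricted to this submanifold is identically equal to its value at $\matr{U}$, which is a stationary point of $f$ on all of $\UN{n}$, the full Euclidean Hessian of $f\circ\text{Exp}_{\matr{U}}$ restricted to the vertical space is zero; combined with the fact that $\ProjGrad{f}{\matr{U}}=0$ kills the first-order cross terms, this forces $\Hess{f}{\matr{U}}$ to vanish on all of $\TangV{\matr{U}}{\UN{n}}$, in particular on each $\matr{Z}_k$.

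For the rank bound, once we know $\matr{Z}_1,\ldots,\matr{Z}_n$ lie in the kernel of $\Hess{f}{\matr{U}}$, and these $n$ matrices are $\RR$-linearly independent (they have disjoint supports), the kernel has dimension at least $n$; since $\dim_{\RR} \mathbf{T}_{\matr{U}}\UN{n} = n^2$, we conclude $\rank{\Hess{f}{\matr{U}}} \le n^2 - n = n(n-1)$.

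I expect the only genuine subtlety to be the step from ``the quadratic form vanishes on each $\matr{Z}_k$'' to ``each $\matr{Z}_k$ is in the kernel.'' The safe way to handle it, which I would adopt, is to argue at the level of the restricted function: since $f\circ \text{Exp}_{\matr{U}}$ is constant on the linear subspace $\sspan\{\matr{Z}_1,\ldots,\matr{Z}_n\}$ of the tangent space (because $\text{Exp}_{\matr{U}}$ maps this subspace onto the torus orbit $\matr{U}\cdot\mathbb{T}^n$ on which $f$ is constant), every directional and mixed second derivative of $f\circ\text{Exp}_{\matr{U}}$ in directions from this subspace is zero, which is exactly the statement that the bilinear form $\Hess{(f\circ\text{Exp}_{\matr{U}})}{\matr{0}}$ vanishes identically on that subspace; by the definition $\Hess{f}{\matr{U}} = \Hess{(f\circ\text{Exp}_{\matr{U}})}{\matr{0}}$ and self-adjointness, this gives $\HessAppl{f}{\matr{U}}{\matr{Z}_k} = \vect{0}$ for each $k$. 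Everything else is routine bookkeeping with the exponential map and \Cref{lem:grad_scale_invariant}.
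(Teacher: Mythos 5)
There is a genuine gap at the step you yourself flagged as the ``only genuine subtlety,'' and the fix you propose does not close it. You correctly observe that $f$ is constant on the torus orbit $\matr{U}\cdot\mathbb{T}^n$, and therefore $f\circ\text{Exp}_{\matr{U}}$ is constant on the vertical subspace $\sspan\{\matr{Z}_1,\ldots,\matr{Z}_n\}$. From this you conclude that all second derivatives of $f\circ\text{Exp}_{\matr{U}}$ \emph{in directions lying in that subspace} vanish, i.e.\ the bilinear form $\Hess{f}{\matr{U}}$ restricted to the vertical subspace is identically zero. But that only says $\RInner{\HessAppl{f}{\matr{U}}{\matr{Z}_j}}{\matr{Z}_k}=0$ for all $j,k$, i.e.\ each $\HessAppl{f}{\matr{U}}{\matr{Z}_k}$ lies in the orthogonal complement of the vertical space; it does \emph{not} say $\HessAppl{f}{\matr{U}}{\matr{Z}_k}=\vect{0}$. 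Self-adjointness cannot rescue this: a self-adjoint $A$ with $\langle Aw,w'\rangle=0$ for all $w,w'$ in a subspace $W$ satisfies only $A(W)\subseteq W^\perp$, not $A(W)=0$. A toy counterexample to the reasoning you use is $f(x,y)=xy$ on $\RR^2$ at the origin: $f$ is constant on the $x$-axis, the origin is critical, the Hessian restricted to the $x$-axis is zero, yet the Hessian does not annihilate $e_1$.

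What is actually needed (and what the paper uses) is the stronger fact that the \emph{entire} torus orbit consists of stationary points, not merely that $f$ is constant along it through a critical point. This follows from \Cref{lem:grad_scale_invariant}, which gives $\ProjGrad{f}{\matr{U}\matr{S}}=\ProjGrad{f}{\matr{U}}\matr{S}=0$ for every $\matr{S}$ in the torus, so the orbit is a critical submanifold. Once you know that, the standard argument is to differentiate the Riemannian gradient along the curve $\gamma(t)=\matr{U}\exp(t\matr{\Omega}_k)$: by \Cref{lem:grad_scale_invariant}, $\ProjGrad{f}{\gamma(t)}=\ProjGrad{f}{\matr{U}}\exp(t\matr{\Omega}_k)$, whose covariant derivative at $t=0$ is a projection of $\ProjGrad{f}{\matr{U}}\matr{\Omega}_k$, which is already zero since $\ProjGrad{f}{\matr{U}}=0$. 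This directly yields $\HessAppl{f}{\matr{U}}{\matr{Z}_k}=\vect{0}$ via the covariant-derivative characterization of the Hessian, avoiding the quadratic-form detour entirely. Your linear-independence argument for the rank bound is fine once the kernel claim is established.
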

\begin{proof}
Let $\gamma :\RR\to \UN{n}$ be a curve defined as
$\gamma(t) = \text{Exp}_{\matr{U}}(t\matr{Z}_k)$,
with $\gamma(0)=\matr{U}$, $\gamma'(0)=\matr{Z}_k$.
By \cite[Def. 5.5.1]{Absil08:Optimization}, \cite[(5.15)]{Absil08:Optimization} and  \Cref{lem:grad_scale_invariant}, we obtain
\begin{align*}
\HessAppl{f}{\matr{U}}{\matr{Z}_k}&= {\bf P}_{\matr{U}}\left(\left.\frac{d}{d t}\ProjGrad{f}{\gamma(t)}\right|_{t=0}\right) = {\bf P}_{\matr{U}}\left(\left.\frac{d}{d t}\ProjGrad{f}{\matr{U}}\exp(t \matr{\Omega}_k) \right|_{t=0}\right)  \\
&= {\bf P}_{\matr{U}}\left(\ProjGrad{f}{\matr{U}}  \matr{\Omega}_k\right) =\frac{\matr{U}}{2} \left(\matr{\Lambda}(\matr{U}) \matr{\Omega}_k  -\matr{\Omega}_k \matr{\Lambda}(\matr{U})  \right).
\end{align*} 
Note that $\matr{U}$ is a stationary point. Then  $\matr{\Lambda}(\matr{U}) = 0$, and  thus  $\HessAppl{f}{\matr{U}}{\matr{Z}_k} = \matr{0}$. 
Since  $\{\matr{Z}_k\}_{k=1}^n$,  are linearly independent,  $\rank{\Hess{f}{\matr{U}}} \le n(n-1)$. 
\end{proof}

\section{Finding Jacobi rotations and derivatives for complex forms}\label{sec:deriv_forms}

\subsection{On correctness of  Jacobi-G}
The following fact follows from  \cref{lem:ProjGradSubmatrix}. 
\begin{corollary}\label{cor:ProjGradSubmatrixInequality}
Let $f$ and $\hij{i}{j}{\matr{\matr{U}}}$ be as in \Cref{lem:ProjGradSubmatrix}. Then
\begin{equation*}
\max_{1\le i<j \le n}
\|\ProjGrad{\hij{i}{j}{\matr{\matr{U}}}}{\matr{I}_2}\| \geq \frac{\sqrt{2}}{n}
\|\ProjGrad{f}{\matr{U}}\|.
\end{equation*}
\end{corollary}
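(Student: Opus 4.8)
The plan is to deduce the inequality from \Cref{lem:ProjGradSubmatrix} by a purely algebraic norm-counting argument. By \Cref{lem:ProjGradSubmatrix}, for each pair $1 \le i < j \le n$ we have $\ProjGrad{\hij{i}{j}{\matr{U}}}{\matr{I}_2} = \mathcal{P}_{i,j}(\matr{\Lambda}(\matr{U}))$, so
\[
\|\ProjGrad{\hij{i}{j}{\matr{U}}}{\matr{I}_2}\|^2 = |\Lambda_{ii}(\matr{U})|^2 + |\Lambda_{ij}(\matr{U})|^2 + |\Lambda_{ji}(\matr{U})|^2 + |\Lambda_{jj}(\matr{U})|^2.
\]
Meanwhile $\|\ProjGrad{f}{\matr{U}}\|^2 = \|\matr{\Lambda}(\matr{U})\|^2 = \sum_{i,j} |\Lambda_{ij}(\matr{U})|^2$, since left-multiplication by the unitary $\matr{U}$ is an isometry for the Frobenius norm (recall $\ProjGrad{f}{\matr{U}} = \matr{U}\matr{\Lambda}(\matr{U})$ from \eqref{eq-riem-grad}). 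The skew-Hermitian property \eqref{eq:Lambda-antihermitian} gives $|\Lambda_{ij}(\matr{U})| = |\Lambda_{ji}(\matr{U})|$, and in particular $\Lambda_{ii}(\matr{U})$ is purely imaginary; combined with \Cref{lem-zero-diagonal} we in fact have $\Lambda_{ii}(\matr{U}) = 0$ for all $i$, so the diagonal drops out entirely and $\|\matr{\Lambda}(\matr{U})\|^2 = \sum_{i \ne j}|\Lambda_{ij}(\matr{U})|^2 = 2\sum_{i<j}|\Lambda_{ij}(\matr{U})|^2$.

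Now I would carry out the counting. Summing the per-pair quantities over all $\binom{n}{2}$ pairs,
\[
\sum_{1 \le i < j \le n} \|\ProjGrad{\hij{i}{j}{\matr{U}}}{\matr{I}_2}\|^2 = \sum_{i<j} \bigl( |\Lambda_{ij}(\matr{U})|^2 + |\Lambda_{ji}(\matr{U})|^2 \bigr) = \sum_{i \ne j} |\Lambda_{ij}(\matr{U})|^2 = \|\ProjGrad{f}{\matr{U}}\|^2,
\]
where the diagonal contributions vanished as noted above. Since there are $\binom{n}{2} = \tfrac{n(n-1)}{2}$ terms in the sum on the left, the maximum term is at least the average:
\[
\max_{1 \le i < j \le n} \|\ProjGrad{\hij{i}{j}{\matr{U}}}{\matr{I}_2}\|^2 \;\ge\; \frac{1}{\binom{n}{2}} \|\ProjGrad{f}{\matr{U}}\|^2 \;=\; \frac{2}{n(n-1)}\|\ProjGrad{f}{\matr{U}}\|^2 \;\ge\; \frac{2}{n^2}\|\ProjGrad{f}{\matr{U}}\|^2,
\]
and taking square roots yields the claimed bound with constant $\sqrt{2}/n$.

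There is no real obstacle here; the only point requiring a little care is making sure the diagonal entries of $\matr{\Lambda}(\matr{U})$ are handled correctly. If one does not want to invoke \Cref{lem-zero-diagonal} (the zero-diagonal property, which relies on scale invariance \eqref{eq:invariance_scaling}), one can instead keep the diagonal and observe that the per-pair norms double-count nothing off-diagonal but each diagonal entry $\Lambda_{ii}$ appears in $n-1$ pairs; this only strengthens the left-hand side, so the averaging argument goes through verbatim with the weaker (and still sufficient) constant. Either way the essential content is just: the squared pair-gradients sum to (at least) the squared full gradient over $\binom{n}{2}$ terms, so the largest is at least a $2/n^2$ fraction of it.
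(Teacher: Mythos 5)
Your proof is correct, and it takes essentially the same route as the paper's: express each $\|\ProjGrad{\hij{i}{j}{\matr{U}}}{\matr{I}_2}\|^2$ as a sum of $|\Lambda_{ij}|^2$ entries via \Cref{lem:ProjGradSubmatrix}, use $\|\ProjGrad{f}{\matr{U}}\| = \|\matr{\Lambda}(\matr{U})\|$, and run an averaging argument over the $\binom{n}{2}$ pairs. You write out the counting explicitly, and you are right that the diagonal entries need to be handled: your closing remark that the diagonal terms only strengthen the left-hand side (since each $|\Lambda_{ii}|^2$ appears $n-1$ times in the pair-sum) means the bound holds whether or not $f$ is scale-invariant, which is a welcome clarification since the corollary as stated only assumes the hypotheses of \Cref{lem:ProjGradSubmatrix}; the paper's one-line proof implicitly leans on the zero-diagonal property.
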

\begin{proof}
 By \eqref{eq-riem-grad} and \Cref{lem:ProjGradSubmatrix}, we see that 
\begin{equation*}
\begin{split}
\|\ProjGrad{f}{\matr{U}}\|^2 &= \|\matr{\Lambda}(\matr{U})\|^2  = \sum\limits_{i,j=1}^{n,n} |\Lambda(\matr{U})_{i,j}|^2 \le \frac{n^2}{2} \max_{1\le i<j \le n} \|\ProjGrad{\hij{i}{j}{\matr{\matr{U}}}}{\uI_2}\|^2. \qed
\end{split}
\end{equation*}
\end{proof}

\begin{remark}
\Cref{cor:ProjGradSubmatrixInequality}  implies that for any differentiable $f$ it is always possible to find $(i_k,j_k)$ satisfying the inequality \eqref{inequality-gra-based}, provided $\delta \le \sqrt{2}/n$.
\end{remark}
In fact, it gives an explicit way to find such a pair, as shown by the following remark.

\begin{remark}\label{rem:choosing_pair}
From \Cref{lem:ProjGradSubmatrix,lem-zero-diagonal}, the condition \eqref{inequality-gra-based} becomes
\begin{equation}\label{inequality-gra-Lambda}
\sqrt{2}|\Lambda_{i_k,j_k}| \ge \delta \|\matr{\Lambda}\|,
\end{equation}
where $\Lambda = \Lambda(\matr{U}_{k-1}) = \matr{U}^{\H}_{k-1}\ProjGrad{f}{\matr{U}_{k-1}}$ is as in \eqref{eq:Lambda}.
Thus the pair can be selected by looking at the elements of $\matr{\Lambda}$, for example, according to one of the strategies:
(a) choose the maximal modulus element of $\matr{\Lambda}$; or
(b) choose the first pair (e.g., in cyclic order) that satisfies \eqref{inequality-gra-Lambda};  if $\delta$ is small, then  \eqref{inequality-gra-Lambda} is most of the time satisfied.
\end{remark}

\subsection{Elementary rotations}
First of all,  our  cost functions   that  satisfy the invariance property \eqref{eq:invariance_scaling}; hence 
 the restriction \eqref{eq-func-h-new}  is also scale-invariant 
\begin{equation}\label{eq:hij_invariance}
\hij{i}{j}{\matr{U}}(\matr{\Utwo}) = \hij{i}{j}{\matr{U}}\left(\matr{\Utwo} \left[\begin{smallmatrix}z_1 & 0 \\ 0 & z_2\end{smallmatrix}\right]\right),\quad\text{for all } z_1, z_2\in \mathbb{T}. 
\end{equation}
Hence, we can  restrict to  matrices $\matr{\Utwo} = \matr{\Utwo}(c,s_1,s_2)$ defined in
\eqref{unitary-para-2},
and maximize
\[
\hij{i}{j}{\matr{U}}(c,s_1,s_2)  \eqdef \hij{i}{j}{\matr{U}}\left(\matr{\Utwo}(c,s_1,s_2)\right) = \hij{i}{j}{\matr{U}}\left( \left[\begin{smallmatrix}
 c & -(s_1+is_2) \\
 s_1-is_2 & c
 \end{smallmatrix}\right]\right);
\]
next, we show how to maximize $\hij{i}{j}{\matr{U}}(c,s_1,s_2)$ for  cost functions \eqref{cost-fn-general} and \eqref{cost-fn-general-trace}.
\begin{proposition}\label{lem-cost-quadratic-form}
For a cost function $f$ of the form  \eqref{cost-fn-general} and \eqref{cost-fn-general-trace} with  $d \le 3$, its restriction for any pair $(i,j)$ and   $\matr{U} \in \UN{n}$  can be expressed as a quadratic form 
\begin{align}
&\hij{i}{j}{\matr{U}}(c,s_1,s_2) = \vect{r}^{\T} \Gamij{i}{j}{\matr{U}}\vect{r} + C,\quad \text{where}\label{eq-cost-quadratic-form} \\
&\vect{r} = \vect{r}(c,s_1,s_2)  \eqdef\left[\begin{smallmatrix}2c^2-1&\; -2cs_1&\; -2cs_2\end{smallmatrix}\right]^{\T} = \left[\begin{smallmatrix}\cos 2\theta&\; -\sin 2\theta  \cos\phi &\; -\sin 2\theta  \sin\phi \end{smallmatrix}\right]^{\T}, \label{eq:w_definition}
\end{align}
$\Gamij{i}{j}{\matr{U}}\in \RR^{3\times 3}$ is a symmetric matrix and $C$ is a constant, whose entries depend polynomially on the real and imaginary parts of $\matr{U}$ and of tensors $\tens{B}$ or $\tens{A}^{(\ell)}$.
\end{proposition}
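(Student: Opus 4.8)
The plan is to reduce the statement to a direct computation on the column-pair that the Givens rotation touches. First I would observe that, because the cost function $f$ in \eqref{cost-fn-general} (equivalently \eqref{cost-fn-general-trace}) depends only on the columns $\vect{u}_1,\ldots,\vect{u}_n$ of $\matr{U}$, and the Givens rotation $\Gmat{i}{j}{\matr{\Utwo}}$ modifies only columns $i$ and $j$, the restriction $\hij{i}{j}{\matr{U}}(\matr{\Utwo})$ depends only on the new pair $\widetilde{\vect{u}}_i = c\vect{u}_i + s^*\vect{u}_j$, $\widetilde{\vect{u}}_j = -s\vect{u}_i + c\vect{u}_j$ (with $c\in\RR^+$, $s=s_1+\ui s_2$), while the contribution of the other $n-2$ columns is the constant $C$. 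So the whole problem is a $2\times 2$ problem, and I would treat it by expanding each summand of \eqref{cost-fn-general-trace} of the form $\tens{B}\contr{1}\vect{w}^*\cdots\contr{d}\vect{w}^*\contr{d+1}\vect{w}\cdots\contr{2d}\vect{w}$ with $\vect{w}\in\{\widetilde{\vect{u}}_i,\widetilde{\vect{u}}_j\}$, where each $\vect{w}$ is a $\CC$-linear combination of $\vect{u}_i,\vect{u}_j$ with coefficients that are monomials of degree $\le 1$ in $c,s_1,s_2$ (since $c,s$ are linear, and $\overline{s}=s_1-\ui s_2$ also linear). Multiplying out $d$ conjugate and $d$ non-conjugate contractions produces, for each summand, a polynomial in $c,s_1,s_2$ of total degree $2d$; summing over $\ell$ and over $p\in\{i,j\}$ gives $\hij{i}{j}{\matr{U}}$ as a real polynomial of degree $\le 2d \le 6$ in $(c,s_1,s_2)$, with coefficients that are polynomial in the real and imaginary parts of $\matr{U}$ and of the tensors.

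Next I would exploit the scale-invariance \eqref{eq:hij_invariance}: the function is invariant under $(\widetilde{\vect{u}}_i,\widetilde{\vect{u}}_j)\mapsto(z_1\widetilde{\vect{u}}_i,z_2\widetilde{\vect{u}}_j)$, $z_1,z_2\in\mathbb{T}$. This means that after expansion only those monomials in the coefficients $c,s,\overline{s}$ survive in which, for each column index, the number of unconjugated factors of that column's coefficient minus the number of conjugated factors is the same as for the opposite column; concretely, in the trace form \eqref{cost-fn-general-trace} exactly $d$ conjugated and $d$ unconjugated coefficients appear, and invariance forces the surviving terms to be products of the "modulus-type" quantities $c^2 = c\cdot\overline{c}$, $|s|^2 = s\overline{s}$, $c s$-paired-with-$c\overline{s}$, i.e. of $c^2$, $s_1^2+s_2^2$, $c s_1$, $c s_2$, and their conjugate-balanced combinations. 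Using $c^2+|s|^2 = 1$, every such monomial of degree $2d\le 6$ can be rewritten as a polynomial in the three quantities $c^2-|s|^2 = 2c^2-1$, $2cs_1$, $2cs_2$, which are exactly (up to sign) the components of the vector $\vect{r}$ in \eqref{eq:w_definition}; moreover the relevant identity is that $(2c^2-1)^2 + (2cs_1)^2 + (2cs_2)^2 = (c^2+|s|^2)^2 = 1$, so $\vect{r}$ lies on the unit sphere and any homogeneous-of-total-degree-$2d$ invariant polynomial on it reduces, modulo $\|\vect{r}\|^2=1$, to a polynomial of degree $\le 1$ in the entries of $\vect{r}$ — wait, more carefully: a degree-$2d$ invariant reduces to degree $\le d$ in $\vect{r}$, and for $d\le 3$ one uses $\|\vect{r}\|^2 = 1$ repeatedly to bring it down to an affine-plus-quadratic form, i.e. precisely $\vect{r}^\T \Gamij{i}{j}{\matr{U}}\vect{r} + (\text{linear}) + C$; the linear part then must also vanish by a parity/invariance argument (replacing $\theta$ by $\pi-\theta$, equivalently $c\mapsto c$, $s\mapsto -s$, is a further symmetry coming from $z_1=z_2=-1$ combined with a relabeling), leaving the pure quadratic form as claimed.

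The cleanest way to carry this out, and the route I would actually write up, is to appeal to the explicit derivation of $\hij{i}{j}{\matr{U}}$ promised in \Cref{sec:deriv_forms}: derive once and for all the monomial expansion of a single summand in terms of the entries of the $2\times d$ matrix $[\vect{u}_i\ \vect{u}_j]$ and the coefficients $(c,s_1,s_2)$, substitute the parametrization \eqref{unitary-para-2}, collect terms, and read off that for $d=1$ (matrix case) one already gets a quadratic form in $\vect{r}$, and for $d=2,3$ the higher-degree terms collapse via $2c^2-1 = c^2-|s|^2$ and $c^2+|s|^2=1$ into the same quadratic-form shape; the symmetric matrix $\Gamij{i}{j}{\matr{U}}$ is then assembled from the coefficients, which are manifestly polynomial in $\matr{U}^\R,\matr{U}^\I$ and in $\tens{B}^\R,\tens{B}^\I$ (or $(\tens{A}^{(\ell)})^\R,(\tens{A}^{(\ell)})^\I$). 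The main obstacle is purely bookkeeping: for $d=3$ one multiplies out three conjugated and three unconjugated linear forms, getting initially many monomials of degrees up to $6$ in $(c,s_1,s_2)$, and one must verify systematically — using both the $\mathbb{T}^2$-invariance to kill non-balanced monomials and the constraint $c^2+|s|^2=1$ to reduce the balanced ones — that no irreducible cubic-or-higher term in $\vect{r}$ remains and no linear term in $\vect{r}$ remains. I expect this reduction (rather than any conceptual point) to be the real content; a convenient organizing device is to note that $\{1, 2c^2-1, 2cs_1, 2cs_2\}$ together with the products $r_a r_b$ span, modulo $\|\vect{r}\|^2=1$, the full space of $\mathbb{T}^2\times\{c\mapsto c, s\mapsto -s\}$-invariant polynomials of degree $\le 6$ in $(c,s_1,s_2)$, which is exactly the span needed.
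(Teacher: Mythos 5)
Your proposal correctly identifies that $\hij{i}{j}{\matr{U}}$ is a $2\times 2$ problem, expands in the monomials of $(c,s_1,s_2)$, uses the scale invariance to keep only ``balanced'' monomials, and packages balanced degree-$2$ blocks into the entries of $\vect{r}$. So far this parallels the paper. But there is a genuine gap in the degree-reduction step, and the substitute argument you offer for it is false.

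After balancing, you arrive (as you say) at a polynomial of degree $\le d$ in the three entries of $\vect{r}$. For $d=3$ that is degree $3$, and your claim that ``one uses $\|\vect{r}\|^2=1$ repeatedly to bring it down to an affine-plus-quadratic form'' is incorrect: modulo the ideal generated by $r_1^2+r_2^2+r_3^2-1$, a cubic such as $r_1 r_2 r_3$ or $r_1^3$ does \emph{not} reduce to anything of degree $\le 2$. (The harmonic polynomials of degree $3$ restrict to linearly independent functions on the sphere, so the degree-$3$ part genuinely survives modulo the sphere relation.) You then try to dispatch the odd (linear, and implicitly cubic) part by a parity symmetry ``$c\mapsto c$, $s\mapsto -s$'', but this map is not a $\mathbb{T}^2$-scaling symmetry of the parametrization $\matr{\Utwo}(c,s_1,s_2)$: the diagonal matrix $\matr{S}$ multiplies a whole column by a single scalar, so it cannot flip the sign of $s$ while leaving $c$ fixed in the canonical representative. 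So there is no reason, from your two tools alone (invariance plus $\|\vect{r}\|=1$), for the degree-$3$ and degree-$1$ terms in $\vect{r}$ to vanish.

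The mechanism the paper uses (in \Cref{thm:elementary_updates}, proved in \Cref{sec:ml_proofs}) is precisely what your proposal is missing: the \emph{trace} structure of $\hij{i}{j}{\matr{U}}$, i.e.\ the fact that it is the sum of the contributions from column $i$ \emph{and} column $j$. The rank-one Hermitian projectors onto the two new columns are
\[
\begin{bmatrix} c \\ s^{*} \end{bmatrix}^{*}\begin{bmatrix} c & s^{*} \end{bmatrix}
= \tfrac{1}{2}\left(\matr{I}_2 + \matr{R}\right),
\qquad
\begin{bmatrix} -s \\ c \end{bmatrix}^{*}\begin{bmatrix} -s & c \end{bmatrix}
= \tfrac{1}{2}\left(\matr{I}_2 - \matr{R}\right),
\]
with $\matr{R}$ a traceless Hermitian matrix whose Pauli-basis coordinates are exactly $\vect{r}$. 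When the two contributions $g_{\tens{T},d}$ are added, the expansion of $(\matr{I}_2+\matr{R})^{\otimes d}$ cancels against that of $(\matr{I}_2-\matr{R})^{\otimes d}$ in all odd powers of $\matr{R}$, so only even powers of $\vect{r}$ survive; for $d\le 3$ this means exactly a constant plus a quadratic form. The realness of $\Gamij{i}{j}{\matr{U}}$ then follows because contraction of the Hermitian tensor $\tens{T}$ with the (Hermitian) basis matrices preserves Hermitian symmetry. Without this projector-cancellation, your expansion stops at a degree-$\le 3$ polynomial in $\vect{r}$ with no further reduction available, so the proof as proposed does not go through.
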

In fact, \Cref{lem-cost-quadratic-form}  was already known for special cases of problems \eqref{eq-cost-jade}--\eqref{cost-4-order} (see \cite[Ch. 5]{comon2010handbook} for an overview); in its general form, \Cref{lem-cost-quadratic-form} is a special case of a general result (\Cref{thm:elementary_updates} in  \cref{sec:elem_updates_general}) that establishes the form of $\hij{i}{j}{\matr{U}}$ for any  order $d$.

To illustrate the idea, we give an example for joint matrix diagonalization.
\begin{example}\label{example-quadratic-form-jade}
For the  function \eqref{eq-cost-jade}, denote $\matr{W}^{(\ell)}=\matr{U}^{\H}\matr{A}^{(\ell)}\matr{U}$, so that  $f(\matr{U}) = \sum\limits_{\ell=1}^{L} \|\diag{\matr{W}^{(\ell)}}\|^2$.
Then it is known \cite{cardoso1996jacobi} that\footnote{Note that these expressions can be simplified for Hermitian matrices $\matr{W}^{(\ell)}$, because in this case
$W_{ij}+W_{ji} =  2{W}^{\R}_{ij}$ and $-\ui (W_{ij} - W_{ji}) = 2 W^{\I}_{ij}$.} 
\begin{align*}
\Gamij{i}{j}{\matr{U}} & =\frac{1}{2}\sum\limits_{\ell=1}^{L} \left( |{{W}^{(\ell)}_{jj}}+{{W}^{(\ell)}_{ii}}|^2  \matr{I}_3 +  \R\left(\vect{z}(\matr{W}^{(\ell)}) \matr{z}^{\H}(\matr{W}^{(\ell)})\right)\right),\quad\mbox{where} \\
\vect{z}(\matr{W}) & \eqdef \begin{bmatrix} W_{jj}-W_{ii} & W_{ij}+W_{ji} & -\ui (W_{ij}-W_{ji}) \end{bmatrix}^{\T}.
\end{align*}
\end{example}
Similar expressions exist for the cost functions  \eqref{eq-cost-3-tensor} (see \cite[(9.29)]{de1997signal} and \cite[Section 5.3.2]{comon2010handbook})  and  \eqref{cost-4-order} (see  \cite{comon2001source}),  but we omit them due to space limitations, and also because \Cref{lem-cost-quadratic-form} supersedes all these results. 

\begin{remark}\label{remark-eigenvectors}
By \Cref{lem-cost-quadratic-form}, the maximization of $\hij{i}{j}{\matr{U}}(c,s_1,s_2)$ is equivalent to maximization of  $\vect{r}^{\T}\Gamij{i}{j}{\matr{U}} \vect{r}$ subject to $\|\vect{r}\| =1$. 
 Thus a maximizer of $\hij{i}{j}{\matr{U}}(c,s_1,s_2)$ can be obtained from an eigenvector of $\Gamij{i}{j}{\matr{U}}$, which we summarize in \Cref{alg-eigenvector-of-Gamma}.
Note that we can choose the maximizer such that $c \ge \frac{\sqrt{2}}{2}$.
\end{remark}

\begin{algorithm}
\caption{Finding Jacobi rotations}\label{alg-eigenvector-of-Gamma} 
{\bf Input:} Point $\matr{U}$, pair $(i,j)$.\\
{\bf Output:} A maximizer $(c,s_1,s_2)$ of $\hij{i}{j}{\matr{U}}(c,s_1,s_2)$.
\begin{itemize}
\item Build $\matr{\Gamma} = \Gamij{i}{j}{\matr{U}}$ according to \Cref{lem-cost-quadratic-form}.
\item Find a leading eigenvector $\vect{w}$ corresponding to the maximal eigenvalue of $\matr{\Gamma}$ (with normalization $\|\vect{w}\| = 1$, $w_1 \ge 0$).
\item Choose $\theta \in \left[0,\frac{\pi}{4}\right]$  and $s_1,s_2$ by  setting 
$\theta = \frac{\arccos(w_1)}{2} \in \left[0,\frac{\pi}{4}\right]$, 
$c = \cos (\theta) = \sqrt{\frac{w_1+1}{2}}\ge \frac{\sqrt{2}}{2}$, 
$s_1 = -\frac{w_2}{2 c}$,  $s_2 = -\frac{w_3}{2 c}$.
\end{itemize}
\end{algorithm}

\subsection{Riemannian derivatives for the  cost functions}
In this subsection, we link the Riemannian derivatives of $\hij{i}{j}{\matr{U}}$  with  the entries of the matrix $\Gamij{i}{j}{\matr{U}}$.
\begin{lemma}\label{prop:dir_deriv}
Let $\hij{i}{j}{\matr{U}}$ satisfy  \eqref{eq:hij_invariance} and be expressed as in \eqref{eq-cost-quadratic-form}.
Then 
\begin{align*}
\ProjGrad{\hij{i}{j}{\matr{U}}}{\uI_2} = 2
\begin{bmatrix}
  0 & \Gamij{i}{j}{\matr{U}}_{12} + \ui \Gamij{i}{j}{\matr{U}}_{13}\\
-\Gamij{i}{j}{\matr{U}}_{12} + \ui \Gamij{i}{j}{\matr{U}}_{13}& 0
 \end{bmatrix},
\end{align*}
hence, in particular, ${\Lambda}_{ij}(\matr{U}) = 2(\Gamij{i}{j}{\matr{U}}_{12} + \ui \Gamij{i}{j}{\matr{U}}_{13})$ by \Cref{lem:ProjGradSubmatrix}.
\end{lemma}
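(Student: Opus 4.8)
The plan is to compute the Riemannian gradient of $\hij{i}{j}{\matr{U}}$ at $\matr{I}_2$ directly from the formula \eqref{eq:gradient_exp_map}, using the quadratic-form representation \eqref{eq-cost-quadratic-form} and the explicit parametrization \eqref{unitary-para-2}. The key observation is that we only need the first-order behavior of $t \mapsto \hij{i}{j}{\matr{U}}(\text{Exp}_{\matr{I}_2}(t\matr{\Delta}))$ at $t = 0$ for a spanning set of tangent vectors $\matr{\Delta} \in \Tang{\matr{I}_2}{\UN{2}}$. Since $\hij{i}{j}{\matr{U}}$ is scale-invariant by \eqref{eq:hij_invariance}, \Cref{lem-zero-diagonal} tells us the gradient lies in $\TangH{\matr{I}_2}{\UN{2}}$, the space of $2\times 2$ skew-Hermitian matrices with zero diagonal; this space is $2$-dimensional over $\RR$, spanned by $\matr{\Delta}_1 = \bsmallmatrix{0 & -1 \\ 1 & 0}$ and $\matr{\Delta}_2 = \bsmallmatrix{0 & -\ui \\ -\ui & 0}$. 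So it suffices to evaluate the directional derivatives along these two directions.

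First I would relate the exponential-map curves to the $(c,s_1,s_2)$ coordinates. For $\matr{\Delta}_1$, $\text{Exp}_{\matr{I}_2}(t\matr{\Delta}_1) = \exp(t\matr{\Delta}_1) = \matr{\Utwo}(\cos t, \sin t, 0)$, so along this curve $\vect{r}(c,s_1,s_2) = \bsmallmatrix{\cos 2t & -\sin 2t & 0}^{\T}$, which has derivative $\bsmallmatrix{0 & -2 & 0}^{\T}$ at $t=0$. Plugging into $\hij{i}{j}{\matr{U}} = \vect{r}^{\T}\Gamij{i}{j}{\matr{U}}\vect{r} + C$ and differentiating, using $\vect{r}(0) = \vect{e}_1 = \bsmallmatrix{1&0&0}^{\T}$, gives $\frac{d}{dt}\big|_{t=0} = 2 \vect{r}(0)^{\T}\Gamij{i}{j}{\matr{U}}\, \vect{r}'(0) = 2\cdot (-2) \Gamij{i}{j}{\matr{U}}_{12} = -4\Gamij{i}{j}{\matr{U}}_{12}$. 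Similarly, for $\matr{\Delta}_2$, $\exp(t\matr{\Delta}_2) = \matr{\Utwo}(\cos t, 0, \sin t)$ (one checks this against \eqref{unitary-para-2} with $s_1 = 0$, $s_2 = \sin t$), giving $\vect{r}'(0) = \bsmallmatrix{0&0&-2}^{\T}$ and hence directional derivative $-4\Gamij{i}{j}{\matr{U}}_{13}$. Then from \eqref{eq:gradient_exp_map}, writing $\matr{G} \eqdef \ProjGrad{\hij{i}{j}{\matr{U}}}{\uI_2} = \bsmallmatrix{0 & g \\ -g^* & 0}$ for some $g \in \CC$, we have $\RInner{\matr{\Delta}_1}{\matr{G}} = \R(\trace{\matr{\Delta}_1^{\H}\matr{G}})$ and $\RInner{\matr{\Delta}_2}{\matr{G}}$, which I would compute to be (up to the correct sign and factor) $-2\R(g)$ and $-2\I(g)$ respectively; matching these against $-4\Gamij{i}{j}{\matr{U}}_{12}$ and $-4\Gamij{i}{j}{\matr{U}}_{13}$ yields $g = 2(\Gamij{i}{j}{\matr{U}}_{12} + \ui\Gamij{i}{j}{\matr{U}}_{13})$, which is exactly the claimed formula.

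The main obstacle, such as it is, will be bookkeeping: getting the normalization constants and signs consistent across three conventions simultaneously — the factor of $2$ in $\vect{r}$ via the double-angle, the factor of $2$ from differentiating the quadratic form, and the inner product \eqref{eq:RInner} conventions on the $2\times 2$ skew-Hermitian space, together with verifying that $\exp(t\matr{\Delta}_2)$ really equals $\matr{\Utwo}(\cos t, 0, \sin t)$ and not some variant with a different sign in the $s_2$ slot. There is no conceptual difficulty; one could also simply quote \Cref{lem:ProjGradSubmatrix} to identify $\Lambda_{ij}(\matr{U})$ with $g$ and thereby get the stated consequence for free once the matrix form is established. I would also remark that the diagonal entries of $\matr{G}$ vanish automatically by the scale invariance \eqref{eq:hij_invariance} and \Cref{lem-zero-diagonal}, which is why no computation along diagonal directions is needed.
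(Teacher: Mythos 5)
Your proposal is correct and follows essentially the same approach as the paper's proof: both pick an orthogonal basis of the horizontal tangent space $\TangH{\matr{I}_2}{\UN{2}}$, compute directional derivatives via the exponential curves $e^{t\matr{\Delta}_k}$, the parametrization $\vect{r}(c,s_1,s_2)$, and the quadratic form $\vect{r}^{\T}\matr{\Gamma}\vect{r}$, then read off the gradient from the inner products. The only cosmetic difference is normalization bookkeeping: the paper uses the half-size basis vectors $\matr{\Delta}_k$ with $\|\matr{\Delta}_k\|^2=1/2$ and a coefficient decomposition $\ProjGrad{h}{\matr{I}_2}=2\omega_1\matr{\Delta}_1+2\omega_2\matr{\Delta}_2$, whereas you use unnormalized basis vectors and resolve the scaling directly through $\R(\trace{\matr{\Delta}_k^{\H}\matr{G}})$.
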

\begin{proof}
 Denote $h=\hij{i}{j}{\matr{U}}$ and $\matr{\Gamma}=\Gamij{i}{j}{\matr{U}}$.
By \eqref{eq:Lambda-antihermitian} and \Cref{lem-zero-diagonal},  we see that $\ProjGrad{h}{\uI_2}$ is skew-Hermitian, and is decomposable as 
 $\ProjGrad{h}{\uI_2}   = {2\omega_1} \matr{\Delta}_1 + {2\omega_2} \matr{\Delta}_2$, 
\begin{align}
& \text{where}\quad\matr{\Delta}_1 = 
\begin{bmatrix}
 0 &  -\frac{1}{2} \\
\frac{1}{2}  & 0
 \end{bmatrix}, \quad \matr{\Delta}_2 = 
\begin{bmatrix}
0  &  -\frac{\ui}{2} \\
-\frac{\ui}{2}  & 0
 \end{bmatrix}.\label{eq:T_U2_basis}
\end{align}
Note that $\{\matr{\Delta}_1,\matr{\Delta}_2\}$ is an orthogonal basis of  $\Tang{\matr{I}_2}{{\tUN{2}}}$.
Since  $\|\matr{\Delta}_1\|^2 = \|\matr{\Delta}_2\|^2 = 1/2$  
\begin{equation*}
\omega_k = \RInner{\matr{\Delta}_k}{\ProjGrad{h}{\matr{I}_2}} = \left.\left( \frac{d}{dt} h (e^{t\matr{\Delta}_k})\right)\right|_{t=0} 
\end{equation*}
 for  $k=1,\,2$. 
On the other hand, we have  
\begin{equation*}
\begin{split}
 h (e^{t\matr{\Delta}_1}) & = h \left( \begin{bmatrix}
 \cos \frac{t}{2} & -\sin \frac{t}{2} \\
\sin  \frac{t}{2} & \cos  \frac{t}{2}
 \end{bmatrix} \right) = h \left(\cos \frac{t}{2}, \sin \frac{t}{2}, 0\right) =   \overline{h} (\cos t, -\sin t, 0),\\
 h (e^{t\matr{\Delta}_2}) & = h \left(\begin{bmatrix}
 \cos \frac{t}{2} & -i\sin \frac{t}{2} \\
-i\sin \frac{t}{2} & \cos \frac{t}{2}
 \end{bmatrix} \right) =h \left(\cos \frac{t}{2}, 0, \sin \frac{t}{2}\right) =  \overline{h} (\cos t, 0, -\sin t),
 \end{split}
\end{equation*}
where  $\overline{h} (\vect{r}) =\vect{r}^{\T} \matr{\Gamma}  \vect{r}$.  
Since   $\nabla \overline{h} (\vect{r})  = 2 \matr{\Gamma}  \vect{r}$,  we have
\begin{equation*}
\omega_1 = -\frac{\partial \overline{h}}{\partial  r_2} (1,0,0) =  -2\Gamma_{21}, \quad \omega_2 = -\frac{\partial \overline{h}}{\partial  r_3} (1,0,0) =  -2\Gamma_{31},
\end{equation*}
which completes the proof.
\end{proof}
\begin{lemma}\label{prop:dir_deriv_Hessian}
For $\hij{i}{j}{\matr{U}}$ as in \Cref{prop:dir_deriv}, and the basis of $\Tang{\matr{I}_2}{{\tUN{2}}}$  as in \eqref{eq:T_U2_basis},
the Riemannian Hessian of $\widetilde{h}$ ($\hij{i}{j}{\matr{U}}$ on ${\tUN{2}}$) is 
\begin{equation}\label{eq:hess_2x2}
\Hess{\widetilde{h}}{\matr{I}_2}  = \HessPS{i}{j}{\matr{U}} \eqdef  2\left(\begin{bmatrix}\Gamij{i}{j}{\matr{U}}_{2,2} & \Gamij{i}{j}{\matr{U}}_{2,3} \\ \Gamij{i}{j}{\matr{U}}_{3,2} & \Gamij{i}{j}{\matr{U}}_{3,3}  \end{bmatrix} - \Gamij{i}{j}{\matr{U}}_{1,1}\matr{I}_2\right).
\end{equation}
\end{lemma}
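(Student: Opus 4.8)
The plan is to compute the Riemannian Hessian of $\widetilde{h}$ at $\matr{I}_2$ directly from the defining formula \eqref{eq:hessian_quad_form}, using the explicit quadratic-form expression \eqref{eq-cost-quadratic-form} for $\hij{i}{j}{\matr{U}}$ together with the geodesics through $\matr{I}_2$ in the quotient manifold ${\tUN{2}}$. Since $\dim {\tUN{2}} = 1$ and the tangent space $\Tang{\matr{I}_2}{{\tUN{2}}}$ is two-dimensional (spanned by $\matr{\Delta}_1, \matr{\Delta}_2$ from \eqref{eq:T_U2_basis}), the Hessian is a $2\times 2$ symmetric matrix with respect to this basis, and it suffices to evaluate the quadratic form \eqref{eq:hessian_quad_form} at $\matr{\Delta}_1$, at $\matr{\Delta}_2$, and at $\matr{\Delta}_1 + \matr{\Delta}_2$ (or to compute the mixed partial directly).

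First I would reuse the curves already appearing in the proof of \Cref{prop:dir_deriv}: the geodesic $t \mapsto e^{t\matr{\Delta}_1}$ satisfies $h(e^{t\matr{\Delta}_1}) = \overline{h}(\cos t, -\sin t, 0)$, and $t \mapsto e^{t\matr{\Delta}_2}$ satisfies $h(e^{t\matr{\Delta}_2}) = \overline{h}(\cos t, 0, -\sin t)$, where $\overline{h}(\vect{r}) = \vect{r}^{\T}\matr{\Gamma}\vect{r}$ with $\matr{\Gamma} = \Gamij{i}{j}{\matr{U}}$. By \eqref{eq-exp-map-quotient}, these are exactly the geodesics in ${\tUN{2}}$ emanating from $\matr{I}_2$ in the directions $2\matr{\Delta}_1$ and $2\matr{\Delta}_2$ respectively (note the factor $2$: $\frac{d}{dt}e^{t\matr{\Delta}_k}|_{t=0} = \matr{\Delta}_k$, but I must be careful matching the parametrization in \eqref{eq:hessian_quad_form} — the direction of the geodesic $e^{t\matr{\Delta}_k}$ is $\matr{\Delta}_k$, so $\RInner{\HessAppl{\widetilde h}{\matr{I}_2}{\matr{\Delta}_k}}{\matr{\Delta}_k} = \frac{d^2}{dt^2} \overline{h}(\cos t, \mp\sin t, 0)\big|_{t=0}$). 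Differentiating $\overline{h}(\cos t, -\sin t, 0) = \Gamma_{11}\cos^2 t - 2\Gamma_{12}\cos t \sin t + \Gamma_{22}\sin^2 t$ twice and evaluating at $t=0$ gives $2\Gamma_{22} - 2\Gamma_{11}$; similarly the $\matr{\Delta}_2$ direction gives $2\Gamma_{33} - 2\Gamma_{11}$. For the off-diagonal entry I would use the geodesic through $\matr{I}_2$ in the direction $\matr{\Delta}_1 + \matr{\Delta}_2$, which corresponds to $\vect{r}(t) = (\cos t, -\tfrac{1}{\sqrt2}\sin t \cdot \sqrt2, \ldots)$ — more cleanly, polarize: $\RInner{\HessAppl{\widetilde h}{\matr{I}_2}{\matr{\Delta}_1}}{\matr{\Delta}_2}$ is extracted from $\frac{d^2}{dt^2} \overline{h}(\cos t, -\tfrac{s_1(t)}{\ }, -\tfrac{s_2(t)}{\ })$ along the combined geodesic; the curvature of the sphere $\|\vect{r}\|=1$ contributes only to the $\Gamma_{11}$ (diagonal-shift) part and not to the cross term, so the mixed second derivative of $\vect{r}^{\T}\matr{\Gamma}\vect{r}$ along directions tangent to $(1,0,0)$ in the $r_2, r_3$ plane is just $2\Gamma_{23}$. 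Assembling these four numbers into the symmetric matrix with respect to $\{\matr{\Delta}_1, \matr{\Delta}_2\}$ yields exactly $\HessPS{i}{j}{\matr{U}}$ as claimed in \eqref{eq:hess_2x2}.

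The main obstacle I anticipate is purely bookkeeping: getting the normalization and the curvature contribution right. The map $t \mapsto \vect{r}(\cos\frac{t}{2}, \ldots)$ sends $\Tang{\matr{I}_2}{{\tUN{2}}}$ to the sphere $\|\vect{r}\|=1$ with $\vect{r}(\matr{I}_2) = (1,0,0)$, and because the sphere is curved, the second derivative of $\overline{h}\circ\vect{r}$ along a geodesic of ${\tUN{2}}$ picks up a term $-2\Gamma_{11}$ times $\|\text{direction}\|^2$ beyond the naive Euclidean Hessian of $\overline{h}$ restricted to the tangent plane $\{r_1 = 1\}$; this is precisely the origin of the $-\Gamij{i}{j}{\matr{U}}_{1,1}\matr{I}_2$ shift in \eqref{eq:hess_2x2}. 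I would verify this by noting $r_1(t) = \cos t$ so $r_1''(0) = -1$, and $\partial \overline{h}/\partial r_1$ at $(1,0,0)$ equals $2\Gamma_{11}$, giving a contribution $r_1''(0)\cdot 2\Gamma_{11} = -2\Gamma_{11}$; the remaining terms come from $(r_2')^2$, $(r_3')^2$, $r_2' r_3'$ paired with the corresponding second-order coefficients of $\overline{h}$, namely $2\Gamma_{22}, 2\Gamma_{33}, 2\Gamma_{23}$. Once these pieces are identified the result follows, and I would double-check consistency with \Cref{prop:dir_deriv}, where the same curves were used to compute the gradient.
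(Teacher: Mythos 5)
Your approach is essentially the same as the paper's: both compute $\frac{d^2}{dt^2}\overline{h}(\cos t, -\alpha_1\sin t, -\alpha_2\sin t)|_{t=0}$ along geodesics $e^{t\matr{\Omega}}$ in ${\tUN{2}}$ and read off the quadratic form; the paper does this in one pass with a general $\matr{\Omega}=\alpha_1\matr{\Delta}_1+\alpha_2\matr{\Delta}_2$ (so the $\Gamma_{23}$ cross term falls out automatically), whereas you evaluate on the basis directions and polarize, but the calculation is the same and your identification of the $-2\Gamma_{11}$ shift as coming from $r_1''(0)\,\partial_{r_1}\overline{h}(1,0,0)$ is exactly right. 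One small slip: you write $\dim{\tUN{2}}=1$, but it is $2$ (as you yourself then use, with $\Tang{\matr{I}_2}{\tUN{2}}$ two-dimensional).
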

\begin{proof}
We denote $h= \hij{i}{j}{\matr{U}}$ and $\matr{\Gamma}=\Gamij{i}{j}{\matr{U}}$  for simplicity, and take 
\begin{equation}\label{eq:omega_geodesic}
\matr{\Omega} = \alpha_1\matr{\Delta}_1 + \alpha_2\matr{\Delta}_2, \quad \text{where } \alpha_1, \alpha_2 \in \RR, \,\alpha_1^2 + \alpha_2^2 =1,
\end{equation}
and $\{\matr{\Delta}_1,\matr{\Delta}_2\}$ are as in \eqref{eq:T_U2_basis}.
Then $h(e^{t\matr{\Omega}})$  and its derivative can be expressed as 
\begin{align}
h(e^{t\matr{\Omega}}) &= h \left(\left[ \begin{smallmatrix}
 \cos \frac{t}{2} & -(\alpha_1 + \ui \alpha_2)\sin \frac{t}{2}  \\
(\alpha_1 -\ui\alpha_2) \sin  \frac{t}{2} & \cos  \frac{t}{2}
 \end{smallmatrix}\right] \right) =
\overline{h} (\cos t, -\alpha_1\sin t,  -\alpha_2\sin t), \label{eq:geodesic_special}\\ 
\frac{d}{dt} h (e^{t\matr{\Omega}}) &=
-2 \begin{bmatrix}\sin t & \alpha_1 \cos t &\alpha_2 \cos t \end{bmatrix}
\matr{\Gamma} \begin{bmatrix} \cos t & -\alpha_1\sin t  & -\alpha_2\sin t \end{bmatrix}^{\T}, \notag
\end{align}
and thus by \cite[(5.32)]{Absil08:Optimization} 
\begin{equation}\label{eq:quad_form_hessian}
\RInner{\matr{\Omega}}{\HessAppl{h}{\matr{I}_2}{\matr{\Omega}}} =  \left. \left( \frac{d^2}{dt^2} h (e^{t\matr{\Omega}})\right)\right|_{t=0} =
\begin{bmatrix} \alpha_1 & \alpha_2 \end{bmatrix} \HessPS{i}{j}{\matr{U}}  \begin{bmatrix} \alpha_1 \\ \alpha_2 \end{bmatrix},
\end{equation}
Finally, note that the geodesic is horizontal (its derivative stays in the horizontal space), hence \eqref{eq:quad_form_hessian} is valid for the Hessian of $\widetilde{h}$.
\end{proof}

\begin{corollary}
If $\matr{I}_2$ is a local maximizer of $\hij{i}{j}{\matr{U}}$, then $\HessPS{i}{j}{\matr{U}} \preceq 0$.
\end{corollary}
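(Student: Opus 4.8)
The plan is to deduce the statement from the standard second-order necessary condition for a local maximum, read on the quotient manifold ${\tUN{2}}$. First I would note that, by the scale invariance \eqref{eq:hij_invariance}, the restriction $h = \hij{i}{j}{\matr{U}}$ descends to a well-defined function $\widetilde{h}$ on ${\tUN{2}}$ (as in \Cref{prop:dir_deriv_Hessian}), and that, since $\matr{I}_2$ represents $\widetilde{\matr{I}_2} \in {\tUN{2}}$, the point $\matr{I}_2$ is a local maximizer of $h$ on $\UN{2}$ if and only if $\widetilde{\matr{I}_2}$ is a local maximizer of $\widetilde{h}$ on ${\tUN{2}}$. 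Then I would invoke the necessary condition recalled in \Cref{sec:unitary_geom} (at a local maximum of a $C^2$ function on a Riemannian manifold the Riemannian Hessian is negative semidefinite), applied to $\widetilde{h}$ at $\widetilde{\matr{I}_2}$, to get $\Hess{\widetilde{h}}{\matr{I}_2} \preceq 0$. Finally, \Cref{prop:dir_deriv_Hessian} identifies $\Hess{\widetilde{h}}{\matr{I}_2}$ with $\HessPS{i}{j}{\matr{U}}$, which is exactly the claim.

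Alternatively, and bypassing the quotient-manifold language, one can argue directly from the computation already carried out in the proof of \Cref{prop:dir_deriv_Hessian}. For any unit vector $(\alpha_1,\alpha_2) \in \RR^2$, the curve $t \mapsto e^{t\matr{\Omega}}$ with $\matr{\Omega} = \alpha_1\matr{\Delta}_1 + \alpha_2\matr{\Delta}_2$ (the basis \eqref{eq:T_U2_basis}) is a geodesic through $\matr{I}_2$, so $t \mapsto h(e^{t\matr{\Omega}})$ is a real function of one variable with a local maximum at $t = 0$, hence its second derivative at $t = 0$ is $\le 0$. By \eqref{eq:quad_form_hessian} this second derivative equals $\begin{bmatrix}\alpha_1 & \alpha_2\end{bmatrix} \HessPS{i}{j}{\matr{U}} \begin{bmatrix}\alpha_1 \\ \alpha_2\end{bmatrix}$, and since $(\alpha_1,\alpha_2)$ ranges over all unit vectors, $\HessPS{i}{j}{\matr{U}} \preceq 0$.

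There is essentially no obstacle here: the entire content is the second-order necessary condition for a smooth (univariate, or manifold-valued) maximization problem, together with the already-established identity \eqref{eq:hess_2x2}. The only point needing a line of care is the passage between a local maximum of $h$ on $\UN{2}$ and one of $\widetilde{h}$ on ${\tUN{2}}$ — or, in the direct version, the elementary fact that a one-variable function with an interior local maximum has nonpositive second derivative there — so the ``hard part'' is purely bookkeeping of the definitions introduced earlier.
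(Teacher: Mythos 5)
Your proposal is correct, and it captures exactly the implicit argument the paper intends: the corollary is placed immediately after \Cref{prop:dir_deriv_Hessian} precisely so that the standard second-order necessary condition (recalled in \Cref{sec:unitary_geom}: at a local maximum the Riemannian Hessian is negative semidefinite) combines with the identity $\Hess{\widetilde{h}}{\matr{I}_2} = \HessPS{i}{j}{\matr{U}}$ from \eqref{eq:hess_2x2}. Your second, more elementary variant — reducing to the one-variable fact that $t \mapsto h(e^{t\matr{\Omega}})$ has nonpositive second derivative at an interior maximum and invoking \eqref{eq:quad_form_hessian} — is equally valid and sidesteps any need to verify that a local maximum of $h$ on $\UN{2}$ descends to one of $\widetilde{h}$ on ${\tUN{2}}$.
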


\begin{remark}\label{eq-Dij-euivalence}
Denote $\matr{\Gamma}=\Gamij{i}{j}{\matr{U}}$.
Then $\HessPS{i}{j}{\matr{U}}$ is negative definite if and only if
\begin{equation*}
\Gamma_{11} >
\lambda_{\rm max}\left(\begin{bmatrix}
\Gamma_{22} & \Gamma_{23}\\
\Gamma_{23}  & \Gamma_{33}
 \end{bmatrix}\right).
\end{equation*}
If, in addition, $\ProjGrad{\hij{i}{j}{\matr{U}}}{\matr{I}_2} =0$, this is equivalent to saying that $\lambda_1(\Gamma) > \lambda_2(\Gamma)$ (i.e., the first two eigenvalues are separated) and  $\Gamma_{11} = \lambda_1(\Gamma)$. 
\end{remark}

\subsection{Complex conjugate forms and equivalence of the cost functions}
For  $\tens{A} \in \CC^{n\times \cdots \times n}$ of order $d$ and an integer $t$, $0 \le t \le d$, we define the corresponding homogeneous conjugate  form \cite{JianL16:characterizing} (a generalization of a homogeneous polynomial) as 
\begin{equation}\label{eq-g-A}
g_{\tens{A}, t}(\vect{u})  
=\tens{A}\contr{1} \vect{u}^{*} \cdots \contr{t} \vect{u}^{*} \contr{t+1} \vect{u} \cdots  \contr{d} \vect{u},
\end{equation}
i.e., the tensor contracted $t$ times with $\vect{u}^{*}$ and the remaining $d-t$ times with $\vect{u}$. 
Then it is easy to see that the cost functions \eqref{cost-fn-general} and \eqref{cost-fn-general-trace}  can be written as\footnote{similarly to  contrast functions  \cite{comon2001source,Como04:ijacsp}}
\begin{equation}\label{eq-sum-simple-functions}
f(\matr{U}) = \sum\limits_{k=1}^{n} \gamma(\vect{u}_k),
\end{equation}
where $\gamma(\vect{u})$ is one of the following options depending on the cost function:
\begin{align}
\gamma(\vect{u}) & = \sum\limits_{\ell=1}^L \alpha_{\ell} | g_{\tens{A}^{(\ell)}, t_\ell}(\vect{u}) |^2, \quad \text{or}\label{eq-sum-of-squares}\\
\gamma(\vect{u}) & = g_{\tens{B}, d}(\vect{u}), \quad\text{where }\tens{B} \text{ is Hermitian  in the sense of \eqref{eq:hermitian}}.\label{eq-hermitian-form}
\end{align}
Note that we call forms of type \eqref{eq-hermitian-form} \emph{Hermitian forms}.
The equivalence of \eqref{cost-fn-general} and \eqref{cost-fn-general-trace} is established by the following result.
\begin{lemma}\label{prop:equivalence_cost_functions}
When restricted to norm-one vectors $\vect{u}$, $\|\vect{u}\|=1$, a function  $\gamma(\vect{u})$ is a Hermitian form  \eqref{eq-hermitian-form} of order $2d$ if and only if it can be represented 
as \eqref{eq-sum-of-squares}  for tensors $\tens{A}^{(1)},\ldots,\tens{A}^{(L)}$ of orders $d_1,\ldots,d_L \le d$.
\end{lemma}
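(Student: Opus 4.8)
The plan is to prove the two directions separately, working throughout with the homogeneous conjugate forms $g_{\tens{A},t}$ and exploiting the Hermitian symmetry structure. For the direction ``sum of squares $\Rightarrow$ Hermitian form'', I would take $\gamma(\vect{u}) = \sum_{\ell} \alpha_\ell |g_{\tens{A}^{(\ell)},t_\ell}(\vect{u})|^2$ and observe that $|g_{\tens{A}^{(\ell)},t_\ell}(\vect{u})|^2 = g_{\tens{A}^{(\ell)},t_\ell}(\vect{u}) \cdot \overline{g_{\tens{A}^{(\ell)},t_\ell}(\vect{u})}$, which, written out as a multilinear expression, is a contraction of the tensor product $\tens{A}^{(\ell)} \otimes (\tens{A}^{(\ell)})^{*}$ against $\vect{u}^{*}$ repeated $t_\ell + (d_\ell - t_\ell) = d_\ell$ times and $\vect{u}$ repeated $d_\ell$ times. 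So each summand is already a degree-$(d_\ell,d_\ell)$ conjugate form; if $d_\ell < d$, I pad the tensor by tensoring with an appropriate copy of the ``identity'' pattern $\sum_i \vect{e}_i^{\otimes 2}$ acting on the extra conjugated/unconjugated slots — this is exactly where the restriction $\|\vect{u}\|=1$ is used, since $\vect{u}^{*\T}\vect{u} = 1$ lets me raise the degree freely. After symmetrizing the resulting order-$2d$ tensor $\tens{B} = \sum_\ell \alpha_\ell (\text{padded } \tens{A}^{(\ell)} \otimes (\tens{A}^{(\ell)})^{*})^{\mathrm{sym}}$ over the first $d$ indices and over the last $d$ indices, one checks directly that $\tens{B}_{\vect{i}\vect{j}} = \tens{B}^{*}_{\vect{j}\vect{i}}$ because complex conjugation swaps the roles of $\tens{A}^{(\ell)}$ and $(\tens{A}^{(\ell)})^{*}$ and swaps the two blocks of indices; the real coefficients $\alpha_\ell$ are preserved under conjugation. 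Hence $\gamma(\vect{u}) = g_{\tens{B},d}(\vect{u})$ with $\tens{B}$ Hermitian.

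For the converse, ``Hermitian form $\Rightarrow$ sum of squares'', I would argue via a diagonalization/spectral decomposition of the Hermitian tensor, paralleling the remark in the excerpt that this equivalence ``is analogous to the spectral theorem for Hermitian matrices.'' Concretely, flatten $\tens{B}$ into a matrix $\matr{M} \in \CC^{n^d \times n^d}$ by grouping the first $d$ indices as a row multi-index and the last $d$ as a column multi-index; the Hermitian symmetry \eqref{eq:hermitian} says precisely $\matr{M} = \matr{M}^{\H}$. By the ordinary spectral theorem, $\matr{M} = \sum_{\ell} \lambda_\ell \vect{v}_\ell \vect{v}_\ell^{\H}$ with $\lambda_\ell \in \RR$ real eigenvalues and $\vect{v}_\ell \in \CC^{n^d}$ orthonormal eigenvectors. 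Reshaping each $\vect{v}_\ell$ back into an order-$d$ tensor $\tens{A}^{(\ell)} \in \CC^{n \times \cdots \times n}$, the quadratic form $\vect{x}^{\H}\matr{M}\vect{x}$ evaluated at $\vect{x} = \vect{u}^{\otimes d}$ becomes $\sum_\ell \lambda_\ell |\langle \vect{v}_\ell, \vect{u}^{\otimes d}\rangle|^2$. I then need to check that $\langle \vect{v}_\ell, \vect{u}^{\otimes d}\rangle$, up to conjugation conventions, equals $g_{\tens{A}^{(\ell)},d}(\vect{u})$ or a relabeling thereof — i.e. that contracting the reshaped $\tens{A}^{(\ell)}$ with $d$ copies of $\vect{u}$ (possibly conjugated, matching the placement of conjugates in \eqref{eq-g-A}) reproduces the inner product. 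Absorbing the signs of $\lambda_\ell$ into the coefficients $\alpha_\ell = \lambda_\ell$ (allowed to be negative, as the statement permits) and using $g_{\tens{B},d}(\vect{u}) = \vect{u}^{\otimes d \,\H}\matr{M}\,\vect{u}^{\otimes d}$ when $\|\vect{u}\|=1$, I obtain $\gamma(\vect{u}) = \sum_\ell \alpha_\ell |g_{\tens{A}^{(\ell)},d}(\vect{u})|^2$, with all $d_\ell = d \le d$. A small subtlety: the $g_{\tens{A},t}$ form distinguishes the number $t$ of conjugated slots, so I should verify that a Hermitian form of ``conjugation type'' $(d,d)$ indeed matches $t_\ell = d$ for each $\tens{A}^{(\ell)}$; this follows because the flattening puts all $d$ conjugated indices of $\vect{u}^{\otimes d\,\H}$ on one side.

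The main obstacle I anticipate is bookkeeping of the conjugation pattern and of which index blocks get symmetrized, rather than any deep difficulty: one must be careful that the ``Hermitian'' condition \eqref{eq:hermitian} on the $2d$-index tensor corresponds exactly to Hermiticity of the $n^d \times n^d$ flattening under the chosen index grouping, and that symmetrization over the two $d$-blocks does not change the value of the form on rank-one symmetric arguments $\vect{u}^{\otimes d}$ (it does not, since the form only sees the symmetric part). The norm-one restriction is essential in the forward direction for degree-raising and in the backward direction for identifying $g_{\tens{B},d}(\vect{u})$ with the quadratic form $\vect{u}^{\otimes d\,\H}\matr{M}\,\vect{u}^{\otimes d}$; I would flag this explicitly. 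I expect the final write-up to be short once the flattening/reshaping correspondence is set up cleanly, and I would reference \cite[Prop.\ 3.5]{JianL16:characterizing} for the analogous statement in the pure-form setting.
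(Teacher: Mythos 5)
Your proposal follows the paper's own proof essentially exactly: spectral decomposition of the $n^d\times n^d$ matricization of $\tens{B}$ for the ``Hermitian $\Rightarrow$ sum of squares'' direction, and $|g_{\tens{A},t}|^2$ realized as a permuted contraction of $\tens{A}\otimes\tens{A}^{*}$ plus degree-raising by tensoring with $\matr{I}_n$ for the converse. One small correction to your final paragraph: the identity $g_{\tens{B},d}(\vect{u}) = (\vect{u}^{\otimes d})^{\H}\matr{M}\,\vect{u}^{\otimes d}$ is an algebraic identity valid for every $\vect{u}$, so the norm-one restriction is genuinely needed only for the degree-raising step, not for the matricization/spectral direction.
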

\Cref{prop:equivalence_cost_functions} is a rather straightforward generalization of the results of \cite[Proposition 3.5]{JianL16:characterizing}; still, we provide a proof in \Cref{sec:ml_proofs} for completeness, and also because our notation is slightly different from that of  \cite{JianL16:characterizing}.  

We conclude this subsection by showing how to find Wirtinger derivatives for forms \eqref{eq-g-A}. 
\begin{lemma}\label{lem-multilinear-form}
For a  form $g(\vect{u}) = g_{\tens{A}, t} (\vect{u})$  defined in \eqref{eq-g-A}, it holds that
\begin{equation*}
\begin{split}
\deriv{g}{\vect{u}^*}(\vect{u}) &= \sum\limits_{k=1}^{t} 
\tens{A}\contr{1} \vect{u}^{*} \cdots \, \xcancel{\contr{k} \vect{u}^*} \, \cdots \contr{t} \vect{u}^{*} \contr{t+1} \vect{u} \cdots\cdots\cdots\cdots  \contr{d} \vect{u}, \\
\deriv{g}{\vect{u}}(\vect{u}) &= \sum\limits_{k=1}^{d- t} 
\tens{A}\contr{1} \vect{u}^{*} \cdots\cdots\cdots\cdots \contr{t} \vect{u}^{*} \contr{t+1} \vect{u} \cdots \xcancel{\contr{t+k} \vect{u}}  \cdots  \contr{d} \vect{u},\\
\deriv{|g(\vect{u}) |^2}{\vect{u}^{*}}& = (g(\vect{u}))^{*} \deriv{g}{\vect{u}^*}(\vect{u})  + (g(\vect{u}))  \left(\deriv{g}{\vect{u}}(\vect{u})\right)^*.  
\end{split}
\end{equation*}
\end{lemma}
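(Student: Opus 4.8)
The plan is to compute the Wirtinger derivatives directly from the definition of the conjugate form $g_{\tens{A},t}$ in \eqref{eq-g-A} by differentiating the multilinear expression entry by entry. The key observation is that $g(\vect{u})$ is a polynomial in the entries of $\vect{u}$ and $\vect{u}^{*}$, which we may regard as formally independent variables for the purposes of Wirtinger calculus; it is linear in each of the $t$ ``conjugated'' slots and linear in each of the $d-t$ ``unconjugated'' slots. First I would write out $g(\vect{u}) = \sum_{i_1,\ldots,i_d} \tens{A}_{i_1\cdots i_d} u^{*}_{i_1}\cdots u^{*}_{i_t} u_{i_{t+1}}\cdots u_{i_d}$ and differentiate with respect to a single entry $u^{*}_{m}$. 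Since each of the first $t$ factors contributes one term in which $u^{*}_{i_k}$ is removed and $i_k$ is set to $m$, and the remaining factors are untouched (they depend only on $\vect{u}$, not $\vect{u}^{*}$), the derivative $\partial g/\partial u^{*}_m$ is exactly the sum over $k=1,\ldots,t$ of the tensor contracted in all slots except the $k$-th, with that free $k$-th index then being $m$; collecting these into vector form gives the stated expression for $\deriv{g}{\vect{u}^*}(\vect{u})$. The computation of $\deriv{g}{\vect{u}}(\vect{u})$ is entirely symmetric, differentiating instead with respect to $u_m$ and hitting the last $d-t$ slots.

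For the third identity, I would apply the product rule for Wirtinger derivatives to $|g(\vect{u})|^2 = g(\vect{u})\, \overline{g(\vect{u})}$. Using the standard facts that $\deriv{(g h)}{\vect{u}^*} = h \deriv{g}{\vect{u}^*} + g \deriv{h}{\vect{u}^*}$ and that $\deriv{\overline{g}}{\vect{u}^*} = \overline{\deriv{g}{\vect{u}}}$ (conjugation swaps the roles of $\vect{u}$ and $\vect{u}^{*}$ in Wirtinger calculus), one immediately gets $\deriv{|g|^2}{\vect{u}^{*}} = \overline{g(\vect{u})}\,\deriv{g}{\vect{u}^*}(\vect{u}) + g(\vect{u})\,\overline{\deriv{g}{\vect{u}}(\vect{u})}$, which is the claimed formula. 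No real obstacle is expected here; the only care needed is bookkeeping of which slots survive each differentiation and making sure the crossed-out-slot notation in the displayed sums matches the index ranges ($k=1,\ldots,t$ in the first sum, $k=1,\ldots,d-t$ with slot $t+k$ in the second). I would present this cleanly by fixing the single-entry derivative first and then vectorizing, rather than manipulating the vector expressions symbolically throughout.
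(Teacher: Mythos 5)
Your proof is correct and takes essentially the same route as the paper: both derive the first two identities from the product rule applied to the multilinear structure together with the elementary Wirtinger rules for $\vect{u}^{\H}\vect{a}$ and $\vect{u}^{\T}\vect{a}$, and both obtain the third identity from the product/chain rule for Wirtinger derivatives combined with the relation $\deriv{g^{*}}{\vect{u}^{*}} = \bigl(\deriv{g}{\vect{u}}\bigr)^{*}$ (equivalently $d|z|^2 = z^{*}dz + z\,dz^{*}$). The paper merely states these as citations while you spell out the index-level computation, but there is no substantive difference.
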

\begin{proof}
The first two equations  follow by the rule of product differentiation and the following identities \cite[Table IV]{hjorundes2007complex-valued}
\begin{equation*}
\deriv{(\vect{u}^{\H}\vect{a})}{\vect{u}}(\vect{u})  = \deriv{(\vect{u}^{\T}\vect{a})}{\vect{u}^{*}}(\vect{u})  =  \vect{0},\quad \deriv{(\vect{u}^{\H}\vect{a})}{\vect{u}^{*}}(\vect{u}) = \deriv{(\vect{u}^{\T}\vect{a})}{\vect{u}}(\vect{u}) =  \vect{a}. \end{equation*}
The last equation  follows\footnote{An alternative proof can be derived by using the representation of  $|g(\vect{u}) |^2$ as a Hermitian form, contained in the proof of \Cref{lem-multilinear-form}.} from   the rule of differentiation of composition \cite[Theorem 1]{hjorundes2007complex-valued}, and the fact that $d|z|^2 = z^*dz + zdz^*$.
\end{proof}

\subsection{Riemannian gradients for cost functions of interest}\label{sec:cost_unified}
Before  computing derivatives for \eqref{cost-fn-general} and \eqref{cost-fn-general-trace}, we make a remark about symmetries in these  functions.
\begin{remark}\label{rem:partial_symmetry}
For any form $g_{\tens{A}, t} (\vect{u})$   \eqref{eq-g-A} we can assume without loss of generality that the tensor $\tens{A}$ is $t$-\emph{semi-symmetric}, i.e., satisfies the following symmetries:
\[
\tenselem{A}_{i_1\ldots i_t i_{t+1} \ldots i_{d}} = \tenselem{A}_{\pi_1(i_1\ldots i_t) \pi_2(i_{t+1} \ldots i_{d})}
\]
for any index $(i_1\ldots i_t i_{t+1} \ldots i_{d})$ and any pair of permutations $\pi_1(\cdot)$ and $\pi_2(\cdot)$ of indices corresponding to the same group of contractions in \eqref{eq-g-A}.
For example,
\begin{itemize}
\item 
for any  $\tens{A} \in \CC^{n\times n \times n}$  we can define  $\tenselem{T}_{ijk} = \frac{\tenselem{A}_{ijk}+\tenselem{A}_{ikj}}{2}$ such that
\begin{equation*}
\tens{A} \contr{1} \vect{u}^* \contr{2} \vect{u} \contr{3} \vect{u} = \tens{T} \contr{1} \vect{u}^* \contr{2} \vect{u} \contr{3} \vect{u};
\end{equation*}
\item similarly, for any $\tens{B} \in \CC^{n\times n \times n \times n}$  and $\tenselem{S}_{ijkl} = \frac{\tenselem{B}_{ijkl}+\tenselem{B}_{ijlk} + \tenselem{B}_{jikl}+\tenselem{B}_{jilk}}{4}$  we have
\begin{equation*}
\tens{B} \contr{1} \vect{u}^*  \contr{2} \vect{u}^* \contr{3} \vect{u} \contr{4} \vect{u} = \tens{S} \contr{1} \vect{u}^*  \contr{2} \vect{u}^* \contr{3} \vect{u} \contr{4} \vect{u}. 
\end{equation*}%
\end{itemize}%
Thus the tensors can be assumed to be semi-symmetric in  \eqref{cost-fn-general} and \eqref{cost-fn-general-trace}.
\end{remark}

Next, we are going to find  Riemannian gradients for cost functions \eqref{cost-fn-general} and \eqref{cost-fn-general-trace}.
Since the cost functions can be written as \eqref{eq-sum-simple-functions}, we have 
\begin{equation}\label{eq:gradient_contrast_like}
\deriv{f}{\matr{U}^{*}}(\matr{U}) = \begin{bmatrix} \deriv{\gamma}{\vect{u}^{*}}(\vect{u}_1) &
\cdots  & \deriv{\gamma}{\vect{u}^{*}}(\vect{u}_n)\end{bmatrix},
\end{equation}
hence    \Cref{lem-multilinear-form} can be used to prove the following result.

\begin{proposition}\label{prop:gradient-general-fn}
\begin{enumerate}[(i)]
\item Let $\tens{B}$ be a Hermitian $d$--semi-symmetric (as in \Cref{rem:partial_symmetry}) tensor. Then for the cost function \eqref{cost-fn-general-trace},
\begin{align}
&\left(\matr{U}^{\H} \deriv{f}{\matr{U}^{*}}(\matr{U}) \right)_{ij} = d \tenselem{V}_{ij\cdots j}, \quad \Lambda(\matr{U})_{ij}  = d (\tenselem{V}_{ij\cdots j} - \tenselem{V}_{i\cdots ij}),\label{cost-general-trace-gradient} \\
&\text{where }\tens{V} = \tens{B} \contr{1} \matr{U}^{\H} \cdots \contr{d} \matr{U}^{\H}\contr{d+1} \matr{U}^{\T} \cdots \contr{2d} \matr{U}^{\T} \label{eq-rotated-hermitian-tensor}
\end{align}
is the rotated Hermitian tensor.
\item Let   $\tens{A}$ be a $t$-semi-symmetric tensor and $\gamma(\vect{u}) = |g_{\tens{A}, t}(\vect{u}) |^2$. For  $f$ defined as \eqref{eq-sum-simple-functions} the gradients can be expressed  as 
\begin{equation}\label{cost-general-gradient}
\left(\matr{U}^{\H} \deriv{f}{\matr{U}^{*}}(\matr{U}) \right)_{ij}   = 
t \tenselem{W}^{*}_{j\ldots j}\tenselem{W}_{ij\ldots j} +
 (d-t)\tenselem{W}_{j\ldots j} \tenselem{W}^{*}_{j\ldots ji};
\end{equation}
\[
\Lambda_{ij}(\matr{U}) =  t (\tenselem{W}^{*}_{j\ldots j}\tenselem{W}_{ij\ldots j} - \tenselem{W}_{i\ldots i}\tenselem{W}^*_{ji\ldots i}) + (d-t)(\tenselem{W}_{j\ldots j} \tenselem{W}^{*}_{j\ldots ji} - \tenselem{W}^{*}_{i\ldots i} \tenselem{W}_{i\ldots ij} ), \]
where $\tens{W} = \tens{A}\contr{1} \matr{U}^{\H} \cdots \contr{t} \matr{U}^{\H} \contr{t+1} \matr{U}^{\T} \cdots  \contr{d} \matr{U}^{\T}$ is the rotated tensor.
\end{enumerate}
\end{proposition}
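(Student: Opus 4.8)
\emph{Proof plan.}
The plan is to reduce both parts to \Cref{lem-multilinear-form} via the column-wise decomposition \eqref{eq:gradient_contrast_like}. Since $f(\matr{U}) = \sum_k \gamma(\vect{u}_k)$, the $j$-th column of $\deriv{f}{\matr{U}^{*}}(\matr{U})$ is $\deriv{\gamma}{\vect{u}^{*}}(\vect{u}_j)$, so that
\[
\left(\matr{U}^{\H}\deriv{f}{\matr{U}^{*}}(\matr{U})\right)_{ij} = \vect{u}_i^{\H}\,\deriv{\gamma}{\vect{u}^{*}}(\vect{u}_j),
\]
and it remains to insert the Wirtinger-derivative formulas of \Cref{lem-multilinear-form}, contract the free mode with $\vect{u}_i^{\H}$ on the left, and recognize the outcome as an entry of the rotated tensor. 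The formulas for $\matr{\Lambda}(\matr{U})$ then follow from \eqref{eq:Lambda} together with the identity $\Lambda_{ij}(\matr{U}) = \left(\matr{U}^{\H}\deriv{f}{\matr{U}^{*}}(\matr{U})\right)_{ij} - \overline{\left(\matr{U}^{\H}\deriv{f}{\matr{U}^{*}}(\matr{U})\right)_{ji}}$, obtained by writing out the $(i,j)$-entry of $(\deriv{f}{\matr{U}^{*}}(\matr{U}))^{\H}\matr{U}$ and relabeling the summation index.

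First I would handle part (ii) by applying the three displayed identities of \Cref{lem-multilinear-form} to $g = g_{\tens{A},t}$. Invoking the $t$-semi-symmetry of $\tens{A}$ (\Cref{rem:partial_symmetry}), the $t$-term sum defining $\deriv{g}{\vect{u}^{*}}(\vect{u})$ collapses to $t$ copies of a single contraction whose free mode may be placed in position $1$, and the $(d-t)$-term sum defining $\deriv{g}{\vect{u}}(\vect{u})$ collapses to $(d-t)$ copies of a contraction with free mode in position $t+1$. Contracting with $\vect{u}_i^{\H}$, writing $g(\vect{u}_j) = \tenselem{W}_{j\cdots j}$, and using that $\tens{W}$ inherits the $t$-semi-symmetry of $\tens{A}$ (so that $\overline{\tenselem{W}_{j\cdots j\, i\, j\cdots j}}$, with $i$ in mode $t+1$, equals $\tenselem{W}^{*}_{j\cdots j i}$), one gets $\vect{u}_i^{\H}\deriv{\gamma}{\vect{u}^{*}}(\vect{u}_j) = t\,\tenselem{W}^{*}_{j\cdots j}\tenselem{W}_{ij\cdots j} + (d-t)\,\tenselem{W}_{j\cdots j}\tenselem{W}^{*}_{j\cdots ji}$, which is \eqref{cost-general-gradient}; subtracting the conjugate of the $(j,i)$-entry yields the stated expression for $\Lambda_{ij}(\matr{U})$.

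Part (i) is the special case $\gamma = g_{\tens{B},d}$, i.e. $t=d$ and $d-t=0$, so \Cref{lem-multilinear-form} produces only the $\deriv{g}{\vect{u}^{*}}$ term; its $d$-term sum collapses by the $d$-semi-symmetry of $\tens{B}$, and contracting with $\vect{u}_i^{\H}$ gives $d\,\tenselem{V}_{ij\cdots j}$ with $\tens{V}$ as in \eqref{eq-rotated-hermitian-tensor}. To read off $\matr{\Lambda}(\matr{U})$ from \eqref{eq:Lambda} I need $\overline{\tenselem{V}_{ji\cdots i}} = \tenselem{V}_{i\cdots ij}$. This follows once I check that the rotation \eqref{eq-rotated-hermitian-tensor} preserves the Hermitian property \eqref{eq:hermitian}: substituting $(\matr{U}^{\H})_{ka} = \overline{U_{ak}}$ on the first $d$ modes and $(\matr{U}^{\T})_{kb} = U_{bk}$ on the last $d$ modes, relabeling the two groups of summation indices, and applying \eqref{eq:hermitian} for $\tens{B}$ gives $\overline{\tenselem{V}_{j_1\cdots j_d i_1\cdots i_d}} = \tenselem{V}_{i_1\cdots i_d j_1\cdots j_d}$; combining this with the $d$-semi-symmetry of $\tens{V}$ (also inherited from $\tens{B}$) delivers the required identity, and hence $\Lambda_{ij}(\matr{U}) = d(\tenselem{V}_{ij\cdots j} - \tenselem{V}_{i\cdots ij})$.

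The computation is essentially bookkeeping: the genuinely delicate step is keeping track of which mode carries the free index after each partial Wirtinger differentiation and then matching the conjugated multilinear contractions with entries of $\tens{V}$ and $\tens{W}$. I expect the main obstacle to be the verification that the unitary rotation preserves Hermitian symmetry and semi-symmetry, since this is exactly what licenses the replacements $\overline{\tenselem{V}_{ji\cdots i}} \to \tenselem{V}_{i\cdots ij}$ and $\tenselem{W}^{*}_{j\cdots ji} \to \overline{\tenselem{W}_{j\cdots ji}}$ that turn the raw expressions into the compact forms in the statement.
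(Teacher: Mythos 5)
Your proof is correct and follows essentially the same route as the paper's: the column-wise decomposition $\left(\matr{U}^{\H}\deriv{f}{\matr{U}^{*}}(\matr{U})\right)_{ij}=\vect{u}_i^{\H}\deriv{\gamma}{\vect{u}^{*}}(\vect{u}_j)$ from \eqref{eq:gradient_contrast_like}, the Wirtinger-derivative formulas of \Cref{lem-multilinear-form}, collapsing the resulting multi-term sums by semi-symmetry, and reading off $\matr{\Lambda}$ from \eqref{eq:Lambda}. The only genuine addition is your explicit check that the unitary rotation preserves the Hermitian symmetry \eqref{eq:hermitian} and the semi-symmetry of the rotated tensors (which licenses $\overline{\tenselem{V}_{ji\cdots i}}\to\tenselem{V}_{i\cdots ij}$ and moving the index $i$ from slot $t+1$ to the last slot in $\tenselem{W}^{*}_{j\cdots ji}$); the paper states ``the form of $\matr{\Lambda}$ follows from \eqref{eq:Lambda}'' and ``due to symmetries'' and leaves those verifications implicit, so your version fills in detail rather than taking a different route. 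One small slip: in part (i) the form $g_{\tens{B},d}$ sits inside \Cref{lem-multilinear-form} with tensor order $2d$ and $t=d$ conjugate contractions, so the lemma's $d-t$ is $d$, not $0$; but you correctly only need the $\deriv{g}{\vect{u}^{*}}$ line (since $\gamma=g$ rather than $|g|^2$), so the conclusion stands.
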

\begin{proof}
\begin{enumerate}[(i)]
\item By  \eqref{eq:gradient_contrast_like} and \Cref{lem-multilinear-form},  we get
\begin{align*}
& \left(\matr{U}^{\H} \deriv{f}{\matr{U}^{*}}(\matr{U}) \right)_{ij} = \vect{u}_i^{\H} \deriv{\gamma}{\vect{u}^{*}}(\vect{u}_j) \\
&= \sum\limits_{k=1}^{d} 
\tens{B}\contr{1} \vect{u}^{*}_j \cdots \, \xcancel{\contr{k} \vect{u}^*_j} \, \cdots \contr{d} \vect{u}^{*}_j \contr{d+1} \vect{u}_j \cdots  \contr{2d} \vect{u}_j \contr{k} \vect{u}^*_i  = d \tenselem{V}_{ij\ldots j},
\end{align*}
where the last equality is due to symmetries. The form of $\matr{\Lambda}$ follows from \eqref{eq:Lambda}.
\item The proof is similar\footnote{The proof can be also directly obtained from \eqref{eq-g-A-squared} and (i); the tensor needs to be semi-symmetrized before applying (i), hence the second term appears in \eqref{cost-general-trace-gradient} compared with \eqref{cost-general-gradient}.} to (i), and follows from \Cref{lem-multilinear-form}   and the equalities
\[
g(\vect{u}_j) = g_{\tens{A}, t}(\vect{u}_j) = \tenselem{W}_{j\ldots j}, \; \vect{u}^{\H}_i\deriv{g}{\vect{u}^*}(\vect{u}_j) = \tenselem{W}_{ij\ldots j},\;
\vect{u}^{\H}_i \left(\deriv{g}{\vect{u}}(\vect{u})\right)^* = \tenselem{W}^*_{ji\ldots i}
\]
\end{enumerate}
\end{proof}
\begin{remark}
Part ii of \Cref{prop:gradient-general-fn} also allows us  to find the Riemannian gradient for all functions of the form \eqref{cost-fn-general}, by summing  individual gradients for each $\tens{A}^{(\ell)}$.
For example,  the Riemannian gradient  of the cost function \eqref{eq-cost-jade}   simplifies to 
\begin{equation}\label{eq:lambda_jade}
\Lambda_{ij}(\matr{U}) =  \sum\limits_{\ell=1}^{L} ({W}^{(\ell)}_{jj}-W^{(\ell)}_{ii})^*{W}^{(\ell)}_{ij} + ({W}^{(\ell)}_{jj} -W^{(\ell)}_{ii}) ({W}^{(\ell)}_{ji})^{*},
\end{equation}
where ${W}^{(\ell)}$ is as in \Cref{example-quadratic-form-jade}. Note that \eqref{eq:lambda_jade} agrees with   \Cref{prop:dir_deriv}. 
\end{remark}

\subsection{Elementary update for Hermitian forms}\label{sec:elem_updates_general}
In this subsection, for simplicity, we only consider Hermitian tensors \eqref{eq:hermitian} of order $2d$ which we assume to be  $d$-semi-symmetric;
we also take $\tens{V}$ as in \eqref{eq-rotated-hermitian-tensor}.
Then $\hij{i}{j}{\matr{U}}(\matr{\Utwo})$ has the form
\[
\hij{i}{j}{\matr{U}}(\matr{\Utwo}) = \trace{\tens{V} \contr{1} \matr{G}^{\H} \cdots \contr{d} \matr{G}^{\H}\contr{d+1} \matr{G}^{\T} \cdots \contr{2d} \matr{G}^{\T}}
\]
where $\matr{G}= \Gmat{i}{j_k}{\matr{\Utwo}}$ is the Givens transformation.
Note that the Givens transformations change only elements of $\tens{V}$ with at least one of indices equal to $i$ or $j$, hence
\begin{equation}\label{eq:hij_to_2x2}
\hij{i}{j}{\matr{U}}(\matr{\Utwo})  = \underbrace{\sum\limits_{k\neq i,j}\tenselem{V}_{k\cdots k}}_{\text{constant}} +
\trace{\tens{T} \contr{1} \matr{\Utwo}^{\H} \cdots \contr{d} \matr{\Utwo}^{\H}\contr{d+1} \matr{\Utwo}^{\T} \cdots \contr{2d} \matr{\Utwo}^{\T}},
\end{equation}
where $\tens{T} = \tens{V}_{(i,j),\ldots,(i,j)}$ is the $2\times \cdots \times  2$ subtensor of $\tens{V}$ corresponding to indices $i,j$.
Then the following result characterizes the elementary rotations.

\begin{theorem}\label{thm:elementary_updates}
Let $\tens{T}$ be a Hermitian $2d$-order $d$-semi-symmetric $2\times \cdots \times 2$ tensor. 
Then there exists a $3\times \cdots \times 3$ real symmetric tensor $\GamTens$ of order $2m$ for $m=\lfloor \frac{d}{2} \rfloor$ such that
\[
\widetilde{h}(c,s_1,s_2) \eqdef \trace{\tens{T} \contr{1} \matr{\Utwo}^{\H} \cdots \contr{d} \matr{\Utwo}^{\H}\contr{d+1} \matr{\Utwo}^{\T} \cdots \contr{2d} \matr{\Utwo}^{\T}} = 
\GamTens \contr{1} \vect{r}  \cdots \contr{2m} \vect{r},
\]
where  $\matr{\Utwo} = \matr{\Utwo}(c, s_1, s_2)  $ and $\vect{r} = \vect{r}(c,s_1,s_2)$ are as in \eqref{unitary-para-2} and \eqref{eq:w_definition}.
\end{theorem}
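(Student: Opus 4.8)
The plan is to reduce the claim to a statement about a single homogeneous conjugate form on $\CC^2$, and then to show that such a form, restricted to the image of the parametrization $\matr{\Utwo} \mapsto \vect{r}(c,s_1,s_2)$, becomes a genuine polynomial in $\vect{r}$ of degree $2m$ with $m = \lfloor d/2 \rfloor$. First I would observe, via the invariance property \eqref{eq:hij_invariance} (inherited from \eqref{eq:invariance_scaling}), that $\widetilde{h}$ depends only on the equivalence class of $\matr{\Utwo}$ in ${\tUN{2}}$, so it is legitimate to substitute the three-parameter form \eqref{unitary-para-2}. The key algebraic fact is that the columns of $\matr{\Utwo}(c,s_1,s_2)$ are $\vect{u}_1 = (c, s_1 - \ui s_2)^{\T}$ and $\vect{u}_2 = (-(s_1+\ui s_2), c)^{\T}$, and every entry of the rotated tensor $\tens{T} \contr{1}\matr{\Utwo}^{\H}\cdots\contr{2d}\matr{\Utwo}^{\T}$ is a sum of monomials, each of which is a product of $d$ factors drawn from $\{c, s_1+\ui s_2, s_1 - \ui s_2\}$ times $d$ conjugate factors. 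Taking the trace (setting all index groups equal) and using the Hermitian symmetry to pair each monomial with its conjugate, one gets a real expression that is a polynomial in $c^2$, $s_1$, $s_2$, $|s|^2 = s_1^2+s_2^2$, homogeneous of total degree $2d$ in $(c,s_1,s_2)$, and — crucially — invariant under $(s_1,s_2) \mapsto -(s_1,s_2)$ together with $c \mapsto c$ (this is the residual $\mathbb{T}$-scaling by $-1$ on one column). Hence only even powers of the pair $(s_1,s_2)$ taken \emph{jointly} with appropriate powers of $c$ survive.

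The second step is the change of variables $\vect{r} = (2c^2-1, -2cs_1, -2cs_2)^{\T} = (\cos 2\theta, -\sin 2\theta\cos\phi, -\sin 2\theta\sin\phi)^{\T}$, which satisfies $\|\vect{r}\| = 1$ since $c^2 + |s|^2 = 1$. I would show that the monomials surviving in $\widetilde{h}$ — after using $c^2 = (1+r_1)/2$, $c s_1 = -r_2/2$, $c s_2 = -r_3/2$, $s_1^2 + s_2^2 = (1-r_1)/2$, and $s_1 s_2$, $s_1^2 - s_2^2$ expressed through $r_2, r_3$ and $c^2$ — all become polynomials in $r_1, r_2, r_3$. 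The degree count is the delicate point: a term of the form $c^{2a}(\text{quadratic in } s_1,s_2)^{b}$ with $a + b = d$ contributes, after dividing appropriately, degree at most $\max(a, b) \le d$ in the original scale, but when written in $\vect{r}$ each factor $c^2$ or $cs_k$ or $s_k^2/c^{0}$... — more carefully, every surviving monomial in $(c, s_1, s_2)$ of degree $2d$ can be grouped into $m = \lfloor d/2\rfloor$ "blocks," each block being one of $c^2$, $cs_1$, $cs_2$, $s_1^2$, $s_1 s_2$, $s_2^2$ (degree-2 pieces), possibly with one leftover factor $c^2$ when $d$ is odd, which is then absorbed using $c^2 + s_1^2 + s_2^2 = 1$; each degree-2 block is a linear combination of $1, r_1, r_2, r_3$, so the product is a polynomial of degree at most $2m$ in $\vect{r}$, homogeneous after homogenizing with $\|\vect{r}\|^2 = 1$. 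This yields the desired symmetric tensor $\GamTens$ of order $2m$, and its entries are real and symmetric because $\widetilde{h}$ is real-valued and we may symmetrize $\GamTens$ over its index permutations without changing the contracted value.

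The main obstacle I anticipate is the bookkeeping in the degree reduction: showing cleanly that the degree in $\vect{r}$ is exactly $2m$ and not, say, $2d$, requires tracking how the $\mathbb{T}$-invariance forces $s_1, s_2$ to appear only in even-degree combinations and how the constraint $c^2 = 1 - |s|^2$ lets one trade an odd leftover power of $c^2$ (present when $d$ is odd) for a constant plus lower-order terms. A convenient way to organize this is to first prove the statement for the purely "non-conjugated" building blocks by induction on $d$: the case $d = 1$ gives a linear form in $\vect{r}$ restricted to $\|\vect{r}\|=1$, hence degree $0$ ($m=0$), matching $\lfloor 1/2\rfloor = 0$... — actually $d=1$ is excluded here since the cost functions of interest have $d \ge 2$; I would instead anchor the induction at $d = 2$ (giving $m=1$, the quadratic form of \Cref{lem-cost-quadratic-form}) and $d = 3$ (again $m=1$), then pass from $d$ to $d+2$ by noting that two extra conjugated/non-conjugated contractions multiply the integrand by one more degree-2 block in $(c, s_1, s_2)$, i.e. one more linear factor in $\vect{r}$ on each side, raising $2m$ by $2$. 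Finally, I would remark that reality and full symmetry of $\GamTens$ follow by replacing $\GamTens$ with its symmetrization $\frac{1}{(2m)!}\sum_{\pi} \pi\cdot\R(\GamTens)$, which leaves $\GamTens \contr{1}\vect{r}\cdots\contr{2m}\vect{r}$ unchanged.
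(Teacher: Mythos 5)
Your proposal has several genuine gaps, and the core algebraic mechanism of the paper's proof is missing.

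\textbf{The invariance claim is false.} You assert that $\widetilde{h}$ is invariant under $(s_1,s_2)\mapsto(-s_1,-s_2)$ with $c$ fixed, attributing this to a residual $\mathbb{T}$-scaling of one column. But the scaling freedom \eqref{eq:invariance_scaling} is already fully absorbed when reducing to the parametrization $\matr{\Utwo}(c,s_1,s_2)$ of \eqref{unitary-para-2}; there is no residual phase acting as $(s_1,s_2)\mapsto(-s_1,-s_2)$. A direct check for $d=2$, $L=1$, $\matr{W}$ Hermitian shows the discrepancy $\widetilde{h}(c,s_1,s_2)-\widetilde{h}(c,-s_1,-s_2)$ is generically a nonzero multiple of $(c^2-|s|^2)(W_{11}-W_{22})\,cs_1\R W_{12}-(W_{11}+W_{22})\,cs_2\I W_{12}$. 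The actual $\mathbb{Z}_2$ symmetry of $\widetilde{h}=g_{\tens{T},d}(\vect{v}_1)+g_{\tens{T},d}(\vect{v}_2)$ is the swap of the two columns $\vect{v}_1\leftrightarrow\vect{v}_2$, which corresponds to $\vect{r}\mapsto-\vect{r}$ (not to your map).

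\textbf{The block-counting argument is internally inconsistent, and the blocks are not polynomial in $\vect{r}$.} A surviving monomial in $(c,s_1,s_2)$ has degree $2d$, so it factors into $d$ degree-two blocks, not $m=\lfloor d/2\rfloor$ as you write. Converting degree-2 blocks to linear expressions in $\vect{r}$ would then give degree $d$, and for odd $d$ the theorem asserts degree $2m=d-1$: you need a genuine degree drop for which your argument offers no mechanism. Worse, the blocks $s_1^2$, $s_2^2$, $s_1s_2$ are \emph{not} affine in $\vect{r}$: e.g.\ $s_1s_2 = r_2r_3/\bigl(2(1+r_1)\bigr)$ is a rational function. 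Only $c^2=(1+r_1)/2$, $cs_k=-r_{k+1}/2$, and the combination $s_1^2+s_2^2=(1-r_1)/2$ are affine. Your final induction step is also circular, since you anchor it on \Cref{lem-cost-quadratic-form}, which the paper derives as a corollary of this very theorem.

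What makes the theorem work, and what is missing from your plan, is the identity $\vect{v}_1^{*}\vect{v}_1^{\T}=\tfrac12(\matr{I}_2+\matr{R})$, $\vect{v}_2^{*}\vect{v}_2^{\T}=\tfrac12(\matr{I}_2-\matr{R})$ where $\matr{R}=\bigl[\begin{smallmatrix}2c^2-1 & 2cs^*\\2cs & 1-2c^2\end{smallmatrix}\bigr]$ is traceless Hermitian and \emph{linear} in $\vect{r}$. Pairing the $k$-th conjugated contraction with the $(d+k)$-th non-conjugated one into a double contraction by $\frac12(\matr{I}_2\pm\matr{R})$, summing $g_{\tens{T},d}(\vect{v}_1)+g_{\tens{T},d}(\vect{v}_2)$ cancels all odd powers of $\matr{R}$, which gives degree at most $2m$ in $\vect{r}$ automatically; reality of the coefficients follows because contraction of a Hermitian tensor with the Hermitian basis matrices of the traceless space stays Hermitian. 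Without this rank-one/cancellation step, the degree bound and reality cannot be recovered by the proposed route.
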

The proof of \Cref{thm:elementary_updates} is contained in \Cref{sec:ml_proofs}.

 \begin{remark}
 Theorem~\ref{thm:elementary_updates} implies that:
 \begin{itemize}
 \item $m = 1$ for  $d \le 3$, i.e., $\GamTens$ is a symmetric $3\times 3$ matrix (called $\matr{\Gamma}$ in \Cref{lem-cost-quadratic-form}).
 Thus,  Theorem~\ref{thm:elementary_updates}  provides a proof for \Cref{lem-cost-quadratic-form}.
 \item $m=2$ for $d=4$, in particular, the elementary update for the $4$-th order complex tensor diagonalization requires maximizing a $4$-th order ternary form (which was established in \cite{de1997signal} for this particular case).
 \item For  $d >3$  (unlike $d\le 3$), the update cannot be computed in a closed form.
 \end{itemize}
 \end{remark}

\begin{remark}
The proof of \Cref{thm:elementary_updates} gives a systematic way to find the coefficients of $\GamTens$ for any instance of \eqref{cost-fn-general} or \eqref{cost-fn-general-trace}, and thus generalizes existing expressions derived for special cases (see \cite[Ch. 5]{comon2010handbook}).
\end{remark}

\section{Weak convergence results}\label{sec:weak}
\subsection{Global rates of convergence of descent algorithms on  manifolds}
We first recall a simplified version of  result presented in \cite[Thm. 2.5]{boumal2016global} on convergence of  ascent  algorithms (originally proposed in \cite{boumal2016global}  for retraction-based algorithms).

\begin{lemma}[{\cite[{Theorem 2.5}]{boumal2016global}}]\label{Riemannian-descent-convergence}
Let $f:\mathcal{M}\rightarrow\RR$ be bounded from above  by $f^{*}$.
Suppose that, for a sequence\footnote{Note that in the original formulation of {\cite[{Theorem 2.5}]{boumal2016global}} $x_k$ were chosen as retractions of some vectors in $\mathbf{T}_{x_{k-1}}$. However, it is easy to see that this condition is not needed in the proof.} of $x_k$,  there exists $c>0$ such that  
\begin{equation}\label{Riemannian-descent-condition}
f(x_{k +1}) - f(x_{k}) \ge c\|\ProjGrad{f}{x_k}\|^2.
\end{equation}
Then 
\begin{enumerate}[(i)]
\item $\|\ProjGrad{f}{x_k}|\| \to  0$ as $k\to \infty$;
\item We can find an $x_k$ with $\|\ProjGrad{f}{x_k}\|  \le  \varepsilon$  and $f(x_{k})  \ge  f(x_0)$ in at most
\begin{equation*}
K_{\varepsilon} = \left\lceil \frac{f^* - f(x_0)}{c} \frac{1}{\varepsilon^2} \right\rceil 
\end{equation*}
iterations; 
i.e., there exists $k \le K_{\varepsilon}$ such that $\|\ProjGrad{f}{x_k}\| < \varepsilon$.
\end{enumerate}
\end{lemma}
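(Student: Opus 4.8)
The plan is a standard telescoping-plus-pigeonhole argument; no deep ingredient enters beyond the sufficient-ascent inequality \eqref{Riemannian-descent-condition} and the upper bound $f^{*}$. First I would note that the right-hand side of \eqref{Riemannian-descent-condition} is nonnegative, so $\{f(x_k)\}$ is nondecreasing; being bounded above by $f^{*}$, it converges, and in particular $f(x_k)\ge f(x_0)$ for every $k$, which already yields the ``$f(x_k)\ge f(x_0)$'' part of (ii). Summing \eqref{Riemannian-descent-condition} over $k=0,\ldots,N$ telescopes to
\[
c\sum_{k=0}^{N}\|\ProjGrad{f}{x_k}\|^2 \;\le\; f(x_{N+1})-f(x_0) \;\le\; f^{*}-f(x_0).
\]

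For part (i), letting $N\to\infty$ shows that the series $\sum_{k\ge 0}\|\ProjGrad{f}{x_k}\|^2$ has partial sums bounded by $(f^{*}-f(x_0))/c$, hence converges, so its general term tends to $0$, i.e. $\|\ProjGrad{f}{x_k}\|\to 0$.

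For part (ii), the displayed inequality gives, for every $N$, the averaging bound
\[
\min_{0\le k\le N}\|\ProjGrad{f}{x_k}\|^2 \;\le\; \frac{1}{N+1}\sum_{k=0}^{N}\|\ProjGrad{f}{x_k}\|^2 \;\le\; \frac{f^{*}-f(x_0)}{c\,(N+1)}.
\]
If $f^{*}=f(x_0)$ then $f$ is constant along the sequence and \eqref{Riemannian-descent-condition} forces $\ProjGrad{f}{x_k}=0$ for all $k$, so the claim is trivial; otherwise I would take $N=K_{\varepsilon}$ with $K_{\varepsilon}=\lceil (f^{*}-f(x_0))/(c\varepsilon^{2})\rceil$, so that $N+1=K_{\varepsilon}+1>(f^{*}-f(x_0))/(c\varepsilon^{2})$ and the right-hand side above is strictly less than $\varepsilon^{2}$. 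Hence some index $k\le K_{\varepsilon}$ satisfies $\|\ProjGrad{f}{x_k}\|<\varepsilon$, and $f(x_k)\ge f(x_0)$ by monotonicity, which is precisely the assertion.

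The argument poses essentially no obstacle; the only points requiring a little care are the off-by-one bookkeeping between $N$, $N+1$ and the ceiling defining $K_{\varepsilon}$, and the observation — already flagged in the footnote to the statement — that the extra hypothesis of \cite{boumal2016global} (that each $x_{k+1}$ be a retraction of a tangent vector at $x_k$) is never used: only \eqref{Riemannian-descent-condition} and boundedness above enter, so the lemma applies verbatim to the multiplicative Jacobi-type iterates $\matr{U}_k=\matr{U}_{k-1}\Gmat{i_k}{j_k}{\matr{\Utwo}_k}$ considered in this paper.
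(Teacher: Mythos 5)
Your proof is correct and follows essentially the same telescoping argument as the paper: part (i) is identical, and for part (ii) you phrase the bound as an averaging/pigeonhole inequality where the paper argues by contraposition (assume the gradient norm exceeds $\varepsilon$ for the first $K$ iterates and derive $K < K_\varepsilon$), which is the same estimate. Your extra remarks — monotonicity of $f(x_k)$ giving $f(x_k)\ge f(x_0)$, the degenerate case $f^*=f(x_0)$, and the observation that the retraction hypothesis of the cited theorem is unused — are accurate and consistent with the paper's footnote.
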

\begin{proof}
\begin{enumerate}[(i)]
\item
We use the classic telescopic sums argument to obtain
\[
f^* -  f(x_0)   \ge  f(x_K)  - f(x_0) =  \sum\limits_{k=0}^{K-1} (f(x_{k+1}) - f(x_k)) \ge c \sum\limits_{k=0}^{K-1} \|\ProjGrad{f}{x_k}\|^2.
\] 
Then we have that  $\sum\limits_{k=0}^{\infty} \|\ProjGrad{f}{x_k}\|^2$ is convergent, thus  $\|\ProjGrad{f}{x_k}\| \to 0$. 
\item 
Assume that $ \|\ProjGrad{f}{x_k}\| > \varepsilon$ for all  $K-1$ iterations. Then, in a similar way, 
\begin{equation*}
f^* -  f(x_0)    \ge c K \min_{0\le k \le K-1} \|\ProjGrad{f}{x_k}\|^2 >  c K \varepsilon^2, 
\end{equation*}
which can only hold if $K < K_\varepsilon$.
\end{enumerate}
\end{proof}
For checking  the  ascent  condition \eqref{Riemannian-descent-condition},  we recall a lemma on retractions. 
\begin{definition}\label{dfn-retraction}(\cite[Definition 4.4.1]{Absil08:Optimization})
A \emph{retraction} on a manifold $\mathcal{M}$ is a smooth mapping ${\rm Retr}$
from the tangent bundle $\TangBundle{\mathcal{M}}$ to $\mathcal{M}$ with the following properties.
Let ${\rm Retr}_{x}:\textbf{T}_{x}\mathcal{M}\rightarrow\mathcal{M}$ denote the restriction of
${\rm Retr}$ to the tangent vector space $\textbf{T}_{x}\mathcal{M}$.\\
(i) ${\rm Retr}_{x}(\vect{0}_{x}) = x$, where $\vect{0}_{x}$ is the zero vector in $\textbf{T}_{x}\mathcal{M}$;\\
(ii) The differential of ${\rm Retr}_{x}$ at $\vect{0}_x$, ${\rm DRetr}_{x}(\vect{0}_x)$, is the identity map.
\end{definition}

\begin{lemma}[{\cite[{Lemma 2.7}]{boumal2016global}}]\label{lemma-lipschitz-gradient}
Let $\mathcal{M}\subseteq\RR^{n}$ be a compact Riemannian submanifold.
Let ${\rm Retr}$ be a retraction on $\mathcal{M}$.
Suppose that $f:\mathcal{M}\rightarrow\RR$ has Lipschitz continuous gradient in the convex hull of $\mathcal{M}$.
Then there exists
$L\geq 0$ such that for all $x\in\mathcal{M}$ and  $\eta\in \mathbf{T}_{x}\mathcal{M}$,
it holds that
\begin{equation}\label{eq-1st-taylor}
\left|{f}({\rm Retr}_{x}(\eta))-\big(f(x)+\langle\eta,\ProjGrad{f}{x}\rangle\big)\right|\leq \frac{L}{2}\|\eta\|^2,
\end{equation}
i.e., ${f}({\rm Retr}_{x}(\eta))$ is uniformly well approximated by its first order approximation.
\end{lemma}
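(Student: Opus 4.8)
The plan is to reduce the estimate to the classical Lipschitz-gradient (``descent'') inequality in the ambient Euclidean space and then absorb the nonlinearity of the retraction into a second-order term, using only the two defining properties of a retraction together with compactness of $\mathcal{M}$. Throughout, I regard $f$ as defined on the compact convex set $\mathrm{conv}(\mathcal{M})$ with $L_0$-Lipschitz Euclidean gradient $\nabla f$, and I record two elementary facts: since $\mathcal{M}$ is compact, $G \eqdef \max_{x\in\mathcal{M}}\|\nabla f(x)\| < \infty$ and $D \eqdef \mathrm{diam}(\mathcal{M}) < \infty$; and since $\eta\in\mathbf{T}_x\mathcal{M}$ while $\ProjGrad{f}{x}$ is the orthogonal projection of $\nabla f(x)$ onto $\mathbf{T}_x\mathcal{M}$, we have $\langle\eta,\ProjGrad{f}{x}\rangle = \langle\eta,\nabla f(x)\rangle$ for the ambient inner product.

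First I would set $z \eqdef {\rm Retr}_x(\eta) \in \mathcal{M}$ and split
\[
f(z) - f(x) - \langle\eta,\ProjGrad{f}{x}\rangle
= \underbrace{\big(f(z) - f(x) - \langle\nabla f(x),\, z-x\rangle\big)}_{A}
+ \underbrace{\langle\nabla f(x),\, z-x-\eta\rangle}_{B}.
\]
Because the segment $[x,z]$ lies in the convex set $\mathrm{conv}(\mathcal{M})$, the descent lemma gives $|A| \le \tfrac{L_0}{2}\|z-x\|^2$, while trivially $|B| \le G\,\|z-x-\eta\|$. It therefore suffices to bound both $\|z-x\|^2$ and $\|z-x-\eta\|$ by a uniform constant times $\|\eta\|^2$.

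The remaining point is to pass from local to global estimates, which I would do by treating $\|\eta\| \le D$ and $\|\eta\| \ge D$ separately. On the compact disk bundle $\{(x,v): x\in\mathcal{M},\ v\in\mathbf{T}_x\mathcal{M},\ \|v\|\le D\}$ the smooth map ${\rm Retr}$ has first and second differentials bounded by some $M < \infty$; combining this with ${\rm Retr}_x(0)=x$ and ${\rm DRetr}_x(0)=\mathrm{id}$, the integral forms of Taylor's theorem, ${\rm Retr}_x(\eta)-x = \int_0^1 D{\rm Retr}_x(t\eta)[\eta]\,dt$ and ${\rm Retr}_x(\eta)-x-\eta = \int_0^1 (1-t)\,D^2{\rm Retr}_x(t\eta)[\eta,\eta]\,dt$, yield $\|z-x\| \le M\|\eta\|$ and $\|z-x-\eta\| \le \tfrac{M}{2}\|\eta\|^2$ whenever $\|\eta\|\le D$. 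When $\|\eta\| \ge D$, one instead uses the crude bounds $\|z-x\| \le D \le \|\eta\|$ and $\|z-x-\eta\| \le \|z-x\|+\|\eta\| \le 2\|\eta\| \le \tfrac{2}{D}\|\eta\|^2$. Feeding these into the estimates for $|A|$ and $|B|$ and taking $L$ to be a sufficiently large constant depending only on $L_0$, $G$, $D$, $M$ gives \eqref{eq-1st-taylor}.

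The step I expect to be the main obstacle is precisely this local-to-global passage: the tangent bundle is noncompact, and a uniform bound on $\|D{\rm Retr}_x(v)\|$ over all $v$ is generally false, so one must not try to prove $\|z-x\| \le c\|\eta\|$ for all $\eta$; the resolution is to exploit that $\mathcal{M}$ is bounded, so $\|z-x\|$ can never exceed $D$, which by itself already forces the required quadratic scaling once $\|\eta\|\ge D$. A minor technical point worth stating explicitly is that $\mathrm{conv}(\mathcal{M})$ is compact and convex (so the Euclidean descent lemma applies on the chord $[x,z]$), and that for an embedded submanifold the induced Riemannian metric makes $\langle\eta,\ProjGrad{f}{x}\rangle$ coincide with the ambient $\langle\eta,\nabla f(x)\rangle$.
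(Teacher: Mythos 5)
The paper does not reproduce a proof of this lemma; it is cited verbatim from \cite{boumal2016global} (Lemma 2.7) and invoked as a black box. So there is no in-paper argument to compare against, and your proposal must stand on its own.

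Your proof is correct. The decomposition $f(z)-f(x)-\langle\eta,\ProjGrad{f}{x}\rangle = A + B$ with $A = f(z)-f(x)-\langle\nabla f(x),z-x\rangle$ and $B = \langle\nabla f(x),z-x-\eta\rangle$ is legitimate, and the reduction $\langle\eta,\ProjGrad{f}{x}\rangle=\langle\eta,\nabla f(x)\rangle$ is exactly right, since for an embedded submanifold $\ProjGrad{f}{x}$ is the orthogonal projection of $\nabla f(x)$ onto $\mathbf{T}_x\mathcal{M}$ and $\eta$ is tangent. Controlling $A$ by the Euclidean descent lemma on the (compact, convex) chord $[x,z]\subset\mathrm{conv}(\mathcal{M})$ and $B$ by Cauchy--Schwarz with $G=\max_{\mathcal{M}}\|\nabla f\|$ is standard. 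The only genuinely delicate point is the one you flag: a uniform bound $\|z-x\|\le c\|\eta\|$ cannot hold on all of $\TangBundle{\mathcal{M}}$, and your resolution --- Taylor-with-integral-remainder with second differential bounded on the compact disk bundle $\{\|v\|\le D\}$ for small $\eta$, and the trivial bounds $\|z-x\|\le D\le\|\eta\|$ and $\|z-x-\eta\|\le 2\|\eta\|\le\tfrac{2}{D}\|\eta\|^2$ for $\|\eta\|\ge D$ --- is exactly the correct fix, and it is essentially the same device used in the cited source. One tacit hypothesis worth keeping explicit: you rely on ${\rm Retr}$ being defined on the entire tangent bundle, which is how the paper states \Cref{dfn-retraction} (and how the statement of the lemma quantifies over all $\eta\in\mathbf{T}_x\mathcal{M}$); for a retraction defined only near the zero section, the case $\|\eta\|\ge D$ would need the lemma statement itself to be restricted.
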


\begin{corollary}\label{lipschitz-gradient-Un}
Let $f$ be any of the functions \eqref{cost-fn-general} or \eqref{cost-fn-general-trace}.
Then there exists a constant $L\geq 0$  such that the uniform (on $\UN{n}$) approximation bound holds true.
\end{corollary}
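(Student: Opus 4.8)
The plan is to invoke \Cref{lemma-lipschitz-gradient} with $\mathcal{M} = \UN{n}$, so it suffices to verify its hypotheses for the cost functions \eqref{cost-fn-general} and \eqref{cost-fn-general-trace}. First I would note that $\UN{n}$, viewed as an embedded real submanifold of $\CC^{n\times n} \cong \RR^{2n^2}$, is compact (it is closed and bounded, since every column has unit norm). Second, I would observe that the exponential map $\text{Exp}_{\matr{U}}$ given in \eqref{eq:exp_map} is a retraction on $\UN{n}$ in the sense of \Cref{dfn-retraction}: property (i) holds because $\exp(\matr{0}) = \matr{I}_n$, and property (ii) holds because the differential of the matrix exponential at $\matr{0}$ is the identity; thus $\UN{n}$ admits a retraction.

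The main point to check is that $f$ has Lipschitz continuous Euclidean gradient on the convex hull of $\UN{n}$. Here I would use the explicit polynomial structure of the cost functions. Each $f$ of the form \eqref{cost-fn-general} or \eqref{cost-fn-general-trace} is, by \eqref{eq-sum-simple-functions}, a finite sum $\sum_{k=1}^n \gamma(\vect{u}_k)$ where $\gamma$ is either a Hermitian form \eqref{eq-hermitian-form} or a sum of squared moduli of multilinear conjugate forms \eqref{eq-sum-of-squares}; in both cases, when written in terms of the real and imaginary parts of the entries of $\matr{U}$, $f$ is a polynomial of fixed degree (at most $2d$) with coefficients depending only on the fixed tensors $\tens{B}$ or $\tens{A}^{(\ell)}$. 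Hence $\nabla^{(\R)} f$ is a polynomial map $\RR^{2n^2} \to \RR^{2n^2}$, so it is $C^\infty$, and therefore its Jacobian is continuous, hence bounded on the convex hull of $\UN{n}$, which is a compact set (contained in the closed Frobenius ball of radius $\sqrt{n}$). A continuously differentiable map with bounded derivative on a convex set is Lipschitz there, so $\nabla^{(\R)} f$ is Lipschitz on the convex hull of $\UN{n}$.

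With compactness of $\UN{n}$, existence of a retraction, and Lipschitz continuity of the gradient on the convex hull all established, \Cref{lemma-lipschitz-gradient} applies and yields a constant $L \ge 0$ such that \eqref{eq-1st-taylor} holds uniformly for all $\matr{U} \in \UN{n}$ and all $\eta \in \mathbf{T}_{\matr{U}}\UN{n}$, which is precisely the claimed uniform approximation bound.

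I do not anticipate a serious obstacle here; the only mildly delicate point is making precise that "polynomial of fixed degree" translates into a genuinely uniform Lipschitz constant, but this is immediate once one restricts to the compact convex hull of $\UN{n}$ and uses that $\nabla^{(\R)} f$ is $C^1$ (indeed $C^\infty$) on a neighborhood of that set. One could alternatively bypass \Cref{lemma-lipschitz-gradient} and prove \eqref{eq-1st-taylor} directly by a second-order Taylor expansion of $t \mapsto f(\text{Exp}_{\matr{U}}(t\eta))$ using the bound \eqref{eq:hessian_quad_form} on the Riemannian Hessian, but invoking the already-stated lemma is cleaner.
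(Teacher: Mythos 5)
Your proposal is correct and follows essentially the same route as the paper: view $\UN{n}$ as a compact real submanifold of $\CC^{n\times n}$, observe that $f$ is polynomial in the real and imaginary parts of $\matr{U}$ so its Euclidean gradient is Lipschitz on the compact convex hull, and then invoke \Cref{lemma-lipschitz-gradient}. You spell out the retraction property of $\text{Exp}$ and are more careful than the paper's own terse wording (which says ``Lipschitz continuity of $f$'' where it means Lipschitz continuity of $\nabla^{(\R)} f$), but there is no substantive difference.
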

\begin{proof}
Note that we can view $\UN{n}$ as a real submanifold of $\CC^{n\times n}$, and its convex hull is compact.
The cost functions  \eqref{cost-fn-general} and \eqref{cost-fn-general-trace} are defined on $\CC^{n\times n}$ and are polynomial in the real and imaginary parts of $\matr{U}$.
This implies Lipschitz continuity of $f$ on the convex hull of $\UN{n}$, hence \Cref{lemma-lipschitz-gradient} can be applied.
\end{proof}

\subsection{Convergence of Jacobi-G algorithm to stationary points}
We will show in this subsection that the iterations in \Cref{Jacobi-G-general} are a special case of the iterations in \Cref{Riemannian-descent-convergence}, and the convergence results of  \Cref{Riemannian-descent-convergence} apply.

\begin{proposition}\label{theorem-local-gene}
Let $f:\UN{n}\rightarrow\RR$  be  one of the functions \eqref{cost-fn-general} or \eqref{cost-fn-general-trace},  and $L\geq 0$ be from \Cref{lipschitz-gradient-Un}.  For \Cref{Jacobi-G-general}, we have:
\begin{enumerate}[(i)]
\item $\|\ProjGrad{f}{\matr{U}_k}\|\rightarrow 0$ in Algorithm~\ref{Jacobi-G-general}; in particular, every accumulation point in Algorithm \ref{Jacobi-G-general} is a stationary point. 
\item  For $\delta$ as in \eqref{inequality-gra-based}, Algorithm~\ref{Jacobi-G-general} needs at most  
\begin{equation*}
\left\lceil \frac{2L (f^* - f(x_0))}{\delta^2} \frac{1}{\varepsilon^2} \right\rceil 
\end{equation*}
iterations  to reach an $\varepsilon$-optimal solution ($\|\ProjGrad{f}{\matr{U}_k}\| \le \varepsilon$).
\end{enumerate}
\end{proposition}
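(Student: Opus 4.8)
The plan is to verify that the Jacobi-G iterates $\{\matr{U}_k\}$ satisfy the sufficient-ascent inequality \eqref{Riemannian-descent-condition} of \Cref{Riemannian-descent-convergence} with constant $c=\delta^2/(2L)$, where $L$ is the Lipschitz constant furnished by \Cref{lipschitz-gradient-Un}, and then simply invoke that lemma. Since $\UN{n}$ is compact and $f$ is continuous, $f$ is bounded above by some $f^{*}$, so \Cref{Riemannian-descent-convergence} applies: part (i) of the lemma gives $\|\ProjGrad{f}{\matr{U}_k}\|\to0$, and continuity of the Riemannian gradient then forces every accumulation point to be stationary, proving (i); part (ii) with $c=\delta^2/(2L)$ gives $K_\varepsilon=\left\lceil \frac{2L(f^{*}-f(\matr{U}_0))}{\delta^2}\frac{1}{\varepsilon^2}\right\rceil$, proving (ii).

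To establish the ascent bound, fix a step $k$, write $(i,j)=(i_k,j_k)$ and $\matr{\Delta}=\ProjGrad{\hij{i}{j}{\matr{U}_{k-1}}}{\matr{I}_2}\in\Tang{\matr{I}_2}{\UN{2}}$. The key observation is that an elementary Givens update is literally a step of the exponential-map retraction on $\UN{n}$: from the block structure in \eqref{eq:givens_trans} one has $\Gmat{i}{j}{\exp(t\matr{\Delta})}=\exp\!\big(t\,\mathcal{P}_{i,j}^{\T}(\matr{\Delta})\big)$, so by \eqref{eq:exp_map} the candidate point $\matr{U}_{k-1}\Gmat{i}{j}{\exp(t\matr{\Delta})}$ equals $\text{Exp}_{\matr{U}_{k-1}}(t\matr{\eta})$ with $\matr{\eta}=\matr{U}_{k-1}\mathcal{P}_{i,j}^{\T}(\matr{\Delta})\in\Tang{\matr{U}_{k-1}}{\UN{n}}$. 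Because $\matr{\Utwo}_k$ maximizes $h_k$ over $\UN{2}$ (equivalently over rotations, by the scale-invariance \eqref{eq:hij_invariance}), we have $f(\matr{U}_k)\ge f(\text{Exp}_{\matr{U}_{k-1}}(t\matr{\eta}))$ for every $t\in\RR$. Applying \Cref{lemma-lipschitz-gradient}/\Cref{lipschitz-gradient-Un} with the exponential-map retraction (and the single constant $L$, independent of $k$) yields
\[
f(\text{Exp}_{\matr{U}_{k-1}}(t\matr{\eta}))\ge f(\matr{U}_{k-1})+t\,\RInner{\matr{\eta}}{\ProjGrad{f}{\matr{U}_{k-1}}}-\tfrac{L}{2}t^2\|\matr{\eta}\|^2 .
\]
Since left multiplication by a unitary and the operator $\mathcal{P}_{i,j}^{\T}$ are isometries, $\|\matr{\eta}\|=\|\matr{\Delta}\|$; and using $\ProjGrad{f}{\matr{U}_{k-1}}=\matr{U}_{k-1}\matr{\Lambda}(\matr{U}_{k-1})$, unitary invariance of $\langle\cdot,\cdot\rangle_{\Re}$, the adjoint relation for $\mathcal{P}_{i,j}$, and \Cref{lem:ProjGradSubmatrix}, we get $\RInner{\matr{\eta}}{\ProjGrad{f}{\matr{U}_{k-1}}}=\RInner{\matr{\Delta}}{\ProjGrad{\hij{i}{j}{\matr{U}_{k-1}}}{\matr{I}_2}}=\|\matr{\Delta}\|^2$. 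Optimizing the lower bound $f(\matr{U}_{k-1})+t\|\matr{\Delta}\|^2-\tfrac{L}{2}t^2\|\matr{\Delta}\|^2$ at $t=1/L$ gives $f(\matr{U}_k)-f(\matr{U}_{k-1})\ge\tfrac{1}{2L}\|\matr{\Delta}\|^2$. Finally, the pair-selection rule \eqref{inequality-gra-based} gives $\|\matr{\Delta}\|\ge\delta\|\ProjGrad{f}{\matr{U}_{k-1}}\|$, hence $f(\matr{U}_k)-f(\matr{U}_{k-1})\ge\tfrac{\delta^2}{2L}\|\ProjGrad{f}{\matr{U}_{k-1}}\|^2$, which is exactly \eqref{Riemannian-descent-condition}.

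I expect the only delicate point to be this bookkeeping around the retraction: checking $\Gmat{i}{j}{\exp(\cdot)}=\exp(\mathcal{P}_{i,j}^{\T}(\cdot))$, so an elementary update is a genuine exponential-map step on $\UN{n}$, and confirming that the maps involved are isometries so that norms and inner products transfer verbatim between $\UN{2}$ and $\UN{n}$. Everything else is a one-line optimization over the step size and a direct citation of \Cref{Riemannian-descent-convergence}. (One could instead apply \Cref{lemma-lipschitz-gradient} on $\UN{2}$ to $h_k$ directly, but then the Lipschitz constant would a priori depend on $\matr{U}_{k-1}$; working on $\UN{n}$ with the uniform constant from \Cref{lipschitz-gradient-Un} avoids this.)
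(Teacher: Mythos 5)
Your proof is correct and follows essentially the same route as the paper: verify the sufficient-ascent condition \eqref{Riemannian-descent-condition} with constant $c=\delta^2/(2L)$ by lifting the elementary Givens step to an exponential-map step on $\UN{n}$, applying the uniform second-order bound from \Cref{lemma-lipschitz-gradient}/\Cref{lipschitz-gradient-Un}, and then invoking \Cref{Riemannian-descent-convergence}. The only cosmetic differences are that the paper directly plugs in the step size $t=1/L$ instead of optimizing over $t$ (same result), and you more explicitly justify the identity $\Gmat{i}{j}{\exp(\cdot)}=\exp(\mathcal{P}_{i,j}^{\T}(\cdot))$, which the paper leaves implicit.
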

\begin{proof}
We need to show that the ascent conditions are satisfied.
Let $h = \hij{i}{j}{\matr{U}}$ be as in \eqref{eq-func-h-new} and $\matr{\Utwo}_{\rm opt}$  be its maximizer. 
We set
\begin{equation*}
\matr{\Delta} = \matr{U} \mathcal{P}_{i,j}^{\T} \mathcal{P}_{i,j} (\matr{\Lambda}(\matr{U}))\in  \mathbf{T}_{\matr{U}}\UN{n},
\end{equation*}
which is a projection of $\ProjGrad{f}{\matr{U}}$ onto the tangent space to the submanifold of the matrices of type $\matr{U}\Gmat{i}{j}{\matr{\Utwo}}$.
 Next, denote  $\matr{\Psi}_1 =  \text{Exp}_{\matr{I_2}}(\frac{1}{L}\ProjGrad{h}{\matr{I}_2})$.
Then, by \Cref{lemma-lipschitz-gradient} and \Cref{lipschitz-gradient-Un}, we have\footnote{Note that the exponential map \eqref{eq:exp_map} is a retraction (see  \cite[Proposition 5.4.1]{Absil08:Optimization}).} that
\begin{equation*}
\begin{split}
h(\matr{\Utwo}_{\rm opt}) - h(\matr{I}_2) & \ge 
 h(\matr{\Utwo}_{1}) - h(\matr{I}_2)  = f\left(\text{Exp}_{\matr{U}}\left(\frac{\matr{\Delta}}{L} \right)\right) - f(\matr{U}) \\
&\ge \RInner{\frac{\matr{\Delta}}{L}}{\ProjGrad{f}{\matr{U}}} - \frac{L}{2} \left\|\frac{\matr{\Delta}}{L}\right\|^2 =
\frac{\|\ProjGrad{h}{\matr{I}_2}\|^2}{2},
\end{split}
\end{equation*}
where  the last equality is from \eqref{eq-riem-grad} and \eqref{eq-gradient-h}.  
Finally, we note that
\begin{equation*}
f(\matr{U}_{k}) - f(\matr{U}_{k-1}) = h_k(\matr{\Psi}_k)  - h_k(\matr{I}_2) \ge
\frac{1}{2L} \|\ProjGrad{h_k}{\matr{I}_2}\|^2 \geq \frac{\delta^2}{2L}
\|\ProjGrad{f}{\matr{U}_{k-1}}\|^2,
\end{equation*}
and thus the descent condition \eqref{Riemannian-descent-condition} holds with the constant $\frac{\delta^2}{2L}$.
\end{proof}

\section{\L{}ojasiewicz   inequality}\label{sec:Lojasiewicz}
In this section, we recall known results and preliminaries that are needed for the main results in \Cref{sec-main-results}.

\subsection{\L{}ojasiewicz gradient inequality and speed of convergence}
Here we recall the results on convergence of descent algorithms on analytic submanifolds that use \L{}ojasiewicz gradient inequality \cite{Loja63:cnrs}, as presented in \cite{Usch15:pjo}.
These results were used in \cite{LUC2018} to prove the global convergence of Jacobi-G on the orthogonal group. 

\begin{definition}[\L{}ojasiewicz gradient inequality,  {\cite[Definition 2.1]{SU15:pro}}]\label{def:Lojasiewicz}
Let $\mathcal{M} \subseteq \RR^n$ be a Riemannian submanifold of $\RR^n$.
The function $f: \mathcal{M} \to \RR$  satisfies  a  \emph{\L{}ojasiewicz  gradient inequality} at a point  $x \in \mathcal{M}$, if there exist  $\delta>0$, $\sigma>0$ and $\zeta\in (0,\frac{1}{2}]$
such that for all $y\in\mathcal{M}$ with $\|y-x\|<\delta$,
it holds that
\begin{equation}\label{eq:Lojasiewicz}
|{f}(x)-{f}(y)|^{1-\zeta}\leq \sigma\|\ProjGrad{f}{y}\|.
\end{equation}
\end{definition}

The following lemma guarantees that \eqref{eq:Lojasiewicz} is satisfied  for the real analytic functions defined on an analytic manifold.  
\begin{lemma}[{\cite[Proposition 2.2 and Remark 1]{SU15:pro}}]\label{lemma-SU15}
Let $\mathcal{M}\subseteq\RR^n$ be an analytic submanifold\footnote{See {\cite[Definition 2.7.1]{krantz2002primer}} or \cite[Definition 5.1]{LUC2018} for a definition of an analytic submanifold.} and  $f: \mathcal{M} \to \RR$ be a real  analytic function.
Then  for  any  $x\in \mathcal{M}$,  $f$ satisfies a \L{}ojasiewicz  gradient  inequality \eqref{eq:Lojasiewicz}  for  
some\footnote{The values of $\delta,\sigma,\zeta$ depend on a specific point.} $\delta,\sigma>0$ and $\zeta\in (0,\frac{1}{2}]$.
\end{lemma}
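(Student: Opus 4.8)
The plan is to reduce the statement to the classical \L{}ojasiewicz gradient inequality for real analytic functions on Euclidean space, which I take as known. Fix $x\in\mathcal{M}$ and set $m=\dim\mathcal{M}$. By the definition of an analytic submanifold, I would choose an analytic parametrization $\varphi\colon V\to\RR^n$ with $V\subseteq\RR^m$ an open neighborhood of the origin, $\varphi(0)=x$, and $\varphi(V)=\mathcal{M}\cap W$ for some open $W\subseteq\RR^n$ containing $x$. Since $f$ is real analytic on $\mathcal{M}$, the pullback $g\eqdef f\circ\varphi$ is real analytic on $V$, so the classical inequality applies: there are $\rho>0$, $\sigma_0>0$, and an exponent $\theta\in(0,1)$ with $\overline{B(0,\rho)}\subseteq V$ and $|g(u)-g(0)|^{\theta}\le\sigma_0\|\nabla g(u)\|$ for all $\|u\|\le\rho$.

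The next step is to transfer this bound back to $\mathcal{M}$. A short computation with the chain rule, using that the Riemannian metric on $\mathcal{M}$ is the one induced from $\RR^n$ (so that $\ProjGrad{f}{\varphi(u)}\in\Tang{\varphi(u)}{\mathcal{M}}$ pairs with tangent vectors through the ambient inner product), gives $\nabla g(u)=\mathrm{D}\varphi(u)^{\T}\,\ProjGrad{f}{\varphi(u)}$, hence $\|\nabla g(u)\|\le\|\mathrm{D}\varphi(u)\|\cdot\|\ProjGrad{f}{\varphi(u)}\|$. The operator norm $u\mapsto\|\mathrm{D}\varphi(u)\|$ is continuous, so it is bounded by some $C$ on the compact ball $\overline{B(0,\rho)}$; substituting, $|f(x)-f(y)|^{\theta}\le\sigma_0 C\,\|\ProjGrad{f}{y}\|$ for every $y=\varphi(u)$ with $\|u\|\le\rho$.

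Finally I would handle the neighborhood and the exponent. As $\varphi$ is a homeomorphism onto $\mathcal{M}\cap W$, there is $\delta>0$ with $\{y\in\mathcal{M}\colon\|y-x\|<\delta\}\subseteq\varphi(B(0,\rho))$, and after shrinking $\delta$ so that $|f(x)-f(y)|\le 1$ on this set one may replace $\theta$ by any larger exponent; picking $\zeta\in(0,\tfrac12]$ with $1-\zeta\ge\theta$ then yields \eqref{eq:Lojasiewicz} with $\sigma=\sigma_0 C$. I do not expect a genuine obstacle: essentially all the content is in the cited classical \L{}ojasiewicz inequality, and the only points that need care are the identity $\nabla g(u)=\mathrm{D}\varphi(u)^{\T}\ProjGrad{f}{y}$ and the standard normalization bringing the exponent into $(0,\tfrac12]$ — and, as the footnote to the statement notes, $\delta,\sigma,\zeta$ necessarily depend on $x$.
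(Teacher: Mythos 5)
Your proof is correct, but note that the paper itself does not prove this lemma at all---it is imported as a known result from the cited reference {\cite[Proposition 2.2 and Remark 1]{SU15:pro}}. The argument you give (pass to a local analytic chart $\varphi$, apply the classical Euclidean \L{}ojasiewicz inequality to $g=f\circ\varphi$, and transfer back via the chain-rule identity $\nabla g(u)=\mathrm{D}\varphi(u)^{\T}\ProjGrad{f}{\varphi(u)}$ together with a uniform bound on $\|\mathrm{D}\varphi\|$ over a compact ball) is exactly the standard proof and is the one used in the cited reference, so there is no genuine difference in approach. One small remark on the final step: the classical Euclidean \L{}ojasiewicz inequality already yields an exponent $\theta\in[\tfrac12,1)$, so the maneuver of shrinking $\delta$ to make $|f(x)-f(y)|\le 1$ and increasing $\theta$ is not needed if one cites that version; it is harmless, though, and makes the argument robust to the weaker $\theta\in(0,1)$ form you quoted.
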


\L{}ojasiewicz  gradient  inequality allows for proving convergence of optimization algorithms  to a single limit point. 
\begin{theorem}[{\cite[Theorem 2.3]{SU15:pro}}]\label{theorem-SU15}
Let $\mathcal{M}\subseteq\RR^n$ be an analytic submanifold
and 
$\{x_k:k\in\NN\}\subseteq\mathcal{M}$.
Suppose that $f$ is real analytic and, for large enough $k$,\\
(i) there exists $\sigma>0$ such that
\begin{equation}\label{eq:sufficient_descent}
|{f}(x_{k+1})-{f}(x_k)|\geq \sigma\|\ProjGrad{f}{x_k}\|\|x_{k+1}-x_{k}\|;
\end{equation}
(ii) $\ProjGrad{f}{x_k}=0$ implies that $x_{k+1}=x_{k}$.\\
Then any accumulation point $x_*$ of $\{x_k:k\in\NN\}\subseteq\mathcal{M}$ is the only limit point.

If, in addition, for some $\kappa > 0$  and for large enough  $k$  it holds that 
\begin{equation}\label{eq:safeguard}
\|x_{k+1} - x_{k}\| \ge \kappa \|\ProjGrad{f}{x_k}\|,
\end{equation}
then the following convergence rates apply
\begin{equation*}
\|x_{k} - x^*\| \le  C\begin{cases}
e^{-ck}, & \text{ if } \zeta = \frac{1}{2} \text{ (for some }c > 0\text{)}, \\
k^{-\frac{\zeta}{1-2\zeta}}, & \text{ if } 0 < \zeta < \frac{1}{2},
\end{cases}
\end{equation*}
where $\zeta$ is the parameter in \eqref{eq:Lojasiewicz} at the limit point $x_*$. 
\end{theorem}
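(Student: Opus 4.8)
This statement is quoted from \cite{SU15:pro}; I sketch the argument I would use. \textbf{Overall strategy.} The plan is to prove $\sum_{k}\|x_{k+1}-x_k\| < \infty$: this makes $\{x_k\}$ a Cauchy sequence, hence convergent, and since a subsequence tends to the accumulation point $x_*$ the limit must equal $x_*$; as this holds for \emph{every} accumulation point, $x_*$ is the unique limit point. I would first reduce to the case $f(x_k)\to f(x_*)$ with $f(x_k)$ monotone along the iterates (which holds in all applications, including \Cref{Jacobi-G-general}, where $f$ is nondecreasing and bounded above), and set $e_k \eqdef |f(x_k)-f(x_*)|\ge 0$, so that $e_k$ decreases to $0$. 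The only ``analytic'' ingredient is \Cref{lemma-SU15}, which supplies a \L{}ojasiewicz inequality \eqref{eq:Lojasiewicz} at $x_*$ for some exponent $\zeta\in(0,\tfrac12]$ and constants $\sigma,\delta>0$.

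\textbf{Step 1: summability.} If $\ProjGrad{f}{x_k}=\vect{0}$ at some large $k$, then by (ii) the sequence is eventually constant and we are done; so assume the gradients are nonzero. I would then apply the concave function $g(t)=t^{\zeta}$, using concavity (and $e_k\ge e_{k+1}$) to get $g(e_k)-g(e_{k+1})\ge \zeta\, e_k^{\zeta-1}(e_k-e_{k+1})$, and bound the two factors separately: condition (i) gives $e_k-e_{k+1}\ge \sigma\|\ProjGrad{f}{x_k}\|\,\|x_{k+1}-x_k\|$, while \eqref{eq:Lojasiewicz} gives $e_k^{\zeta-1}\ge \sigma^{-1}\|\ProjGrad{f}{x_k}\|^{-1}$ whenever $x_k$ lies in the $\delta$-ball around $x_*$. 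Multiplying, the gradient norms cancel and $g(e_k)-g(e_{k+1})\ge \zeta\,\|x_{k+1}-x_k\|$; telescoping then yields $\sum_{k\ge k_0}\|x_{k+1}-x_k\|\le \zeta^{-1}e_{k_0}^{\zeta}$.

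\textbf{Step 2: the ``capture'' argument (the main obstacle).} The subtlety in Step 1 is that \eqref{eq:Lojasiewicz} was only valid for $x_k$ inside the $\delta$-ball, so I must ensure the iterates never leave it. I would choose $k_0$ in the convergent subsequence with $\|x_{k_0}-x_*\|$ and $e_{k_0}^{\zeta}$ both so small that $\|x_{k_0}-x_*\|+\zeta^{-1}e_{k_0}^{\zeta}<\delta$ (possible since both tend to $0$), and then argue by induction on $k\ge k_0$: if $x_{k_0},\dots,x_k$ all lie in the ball, then Step 1 restricted to these indices gives $\|x_{k+1}-x_*\|\le \|x_{k_0}-x_*\|+\sum_{j=k_0}^{k}\|x_{j+1}-x_j\|\le \|x_{k_0}-x_*\|+\zeta^{-1}e_{k_0}^{\zeta}<\delta$. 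This closes the induction, establishes $\sum_k\|x_{k+1}-x_k\|<\infty$, and moreover legitimizes the tail bound $\|x_k-x_*\|\le \sum_{j\ge k}\|x_{j+1}-x_j\|\le \zeta^{-1}e_k^{\zeta}$, which feeds into the rate estimates.

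\textbf{Step 3: rates.} Assuming in addition \eqref{eq:safeguard}, I would combine (i) with \eqref{eq:safeguard} to get $e_k-e_{k+1}\ge \sigma\kappa\|\ProjGrad{f}{x_k}\|^2$, then use $\|\ProjGrad{f}{x_k}\|\ge \sigma^{-1}e_k^{1-\zeta}$ from \eqref{eq:Lojasiewicz} to obtain the scalar recursion $e_k-e_{k+1}\ge \tfrac{\kappa}{\sigma}\,e_k^{\beta}$ with $\beta\eqdef 2(1-\zeta)\in[1,2)$. If $\zeta=\tfrac12$ then $\beta=1$, so $e_{k+1}\le(1-\tfrac{\kappa}{\sigma})e_k$ (with $\kappa/\sigma$ shrunk below $1$ if needed) decays geometrically, and the tail bound from Step 2 yields $\|x_k-x_*\|\le \zeta^{-1}e_k^{\zeta}\le Ce^{-ck}$. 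If $0<\zeta<\tfrac12$ then $\beta>1$; applying the convex map $t\mapsto t^{1-\beta}$ to the recursion (the classical trick for sequences with $e_k-e_{k+1}\gtrsim e_k^{\beta}$) gives $e_k^{1-\beta}\ge \mathrm{const}+\tfrac{\kappa}{\sigma}(\beta-1)(k-k_0)$, hence $e_k=O(k^{-1/(\beta-1)})$ and $\|x_k-x_*\|\le \zeta^{-1}e_k^{\zeta}=O(k^{-\zeta/(\beta-1)})=O(k^{-\zeta/(1-2\zeta)})$, which matches the stated rate. The one genuinely delicate point throughout is Step 2 — making the neighborhood invariant — everything else is bookkeeping with the two inequalities (i) and \eqref{eq:Lojasiewicz}.
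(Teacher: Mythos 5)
This theorem is quoted by the paper from \cite{SU15:pro} without proof, so there is no in-paper argument to compare against. Your reconstruction is nevertheless faithful to the argument in \cite{SU15:pro} (itself modelled on the Absil--Mahony--Andrews approach \cite{AbsMA05:sjo}): the three-step structure --- concavity/telescoping of $t\mapsto t^\zeta$ to get summability of $\|x_{k+1}-x_k\|$, the capture induction keeping iterates inside the \L{}ojasiewicz ball, and the scalar recursion $e_k-e_{k+1}\gtrsim e_k^{2(1-\zeta)}$ for the rates --- is exactly what the reference does. Your Step 2 correctly identifies the genuinely delicate point, and the rate derivation (linear for $\zeta=\tfrac12$, and the convex-map trick giving $e_k^{1-\beta}\gtrsim k$ with $\beta-1=1-2\zeta$ for $\zeta<\tfrac12$) matches the stated exponents.

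Two small points deserve tidying. First, you overload $\sigma$ for both the constant in condition (i) and the \L{}ojasiewicz constant from \eqref{eq:Lojasiewicz}; after the cancellation of $\|\ProjGrad{f}{x_k}\|$ in Step 1 the surviving factor is $\zeta\sigma_{(i)}/\sigma_{L}$, not $\zeta$ --- harmless, but the two constants should be distinguished. Second, the paper's restatement of the theorem has an absolute value in \eqref{eq:sufficient_descent} and does not explicitly assume $f(x_k)$ monotone; you correctly flag that monotonicity is needed for the telescoping to produce a bounded sum (otherwise $e_k$ could oscillate). In \cite{SU15:pro} the sufficient-descent hypothesis is stated without the absolute value, so monotonicity is built in, and in the paper's application (\Cref{Jacobi-G-general}) $f(\matr{U}_k)$ is nondecreasing; so this is an imprecision in the paper's quotation rather than a gap in your proof.
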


\begin{remark}\label{rem:sufficient_descent_relax}
We can  relax  the conditions of \Cref{theorem-SU15}  as follows.  We can require just that \eqref{eq:sufficient_descent} holds  for all $k$ such that  $\|x_k -x_*\| < \varepsilon$, where $x_*$ is  an accumulation point of the sequence and $\varepsilon > 0$ is some radius. 
This can be verified by inspecting the proof of \Cref{theorem-SU15} (see also the proof of \cite[Theorem 3.2]{AbsMA05:sjo})
\end{remark}

In the case $\zeta = \frac{1}{2}$, according to \Cref{theorem-SU15}, the convergence is linear (similarly to the classic results  on local convergence of the gradient descent algorithm  \cite{Poly63:gradient,BoydV04}).
In the optimization literature, the inequality  \eqref{eq:Lojasiewicz} with $\zeta = \frac{1}{2}$ is often called \emph{Polyak-\L{}ojasiewicz inequality}\footnote{The inequality \eqref{eq:Lojasiewicz} with $\zeta = \frac{1}{2}$ goes back to Polyak \cite{Poly63:gradient}, who used it for proving  linear convergence of the gradient descent algorithm.}.
In the next subsection, we recall some sufficient conditions  for Polyak-\L{}ojasiewicz inequality to hold.

\subsection{\L{}ojasiewicz inequality at stationary  points}
It is known, and widely used in optimization (especially in the Euclidean case), that around a strong local maximum the function satisfies the Polyak-\L{}ojasiewicz inequality.
In fact, it is also valid for non-degenerate stationary  points, as shown in  \cite{Hu2018}.
Here we recall the most general recent result on possibly degenerate stationary  points that satisfy the so-called Morse-Bott property (see also \cite[p.248]{bott1954}).

\begin{definition}[{\cite[Definition 1.5]{feehan2018optimal}}]\label{def-local-morse-bott}
Let $\mathcal{M}$ be a $C^\infty$ submanifold and  $f: \mathcal{M} \to \RR$ be a $C^{2}$ function. Denote the set of stationary points as  
$${\rm Crit} f = \{x \in \mathcal{M} : \ProjGrad{f}{x} = 0\}.$$
The function $f$ is said to be \emph{Morse-Bott} at $x_0\in\mathcal{M}$  if there exists an open neighborhood $\set{U} \subseteq \mathcal{M}$ of $x_0$ such that
\begin{enumerate}[(i)]
\item  $\set{C} = \set{U} \cap {\rm Crit}f$ is a relatively open, smooth submanifold of $\mathcal{M}$;
\item $\mathbf{T}_{x_0}\set{C}  = {\rm Ker}\ \Hess{f}{x_0} $.
\end{enumerate}
\end{definition}
\begin{remark}\label{remMorseBott}
(i) If $x_0\in\mathcal{M}$ is a non-degenerate stationary point, then $f$ is  Morse-Bott at $x_0$, since $\{x_0\}$ is a zero-dimensional manifold in this case.\\
(ii) If $x_0\in\mathcal{M}$ is a degenerate stationary point, then condition (ii) in \Cref{def-local-morse-bott}  can be rephrased\footnote{due to the fact that $\mathbf{T}_{x_0}\mathcal{C}  \subseteq {\rm Ker}\ \Hess{f}{x_0}$.} as 
\begin{equation}\label{eq:Hessian_rank_condition}
\rank{\Hess{f}{x_0}} =  \dim \mathcal{M} - \dim \set{C}.
\end{equation}
\end{remark}
For the  functions that satisfy the  Morse-Bott property, it was recently shown that the Polyak-\L{}ojasiewicz inequality holds true.
\begin{theorem}[{\cite[Theorem 3, Corollary 5]{feehan2018optimal}}]\label{thm:PLMorseBottEuclidean}
If $\set{U}\subseteq\RR^{n}$ is an open  subset and $f: \set{U} \to \RR$ is Morse-Bott at a stationary point $x$,  then there exist $\delta, \sigma>0$  such that
\begin{equation*}
|{f}(y)-{f}(x)| \leq \sigma \|\nabla{f}(y)\|^2,
\end{equation*}
for any $y\in \set{U}$ satisfying $\|y-x\| \le \delta$.
\end{theorem}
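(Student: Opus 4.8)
The statement is Theorem~\ref{thm:PLMorseBottEuclidean}: a Morse--Bott function $f$ on an open set $\set{U}\subseteq\RR^n$ satisfies the Polyak--\L ojasiewicz inequality $|f(y)-f(x)|\le \sigma\|\nabla f(y)\|^2$ near a stationary point $x$. Since this is quoted from \cite{feehan2018optimal}, the plan is to reproduce the standard argument: reduce to a normal form via a Morse--Bott--type splitting lemma, and then estimate directly in the split coordinates. The plan is to proceed as follows.

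First I would set up a local decomposition of the tangent space at $x$. Write $\mathbf{T}_x\mathcal{M} = \RR^n$ (since $\set{U}$ is open), and split it orthogonally as $N \oplus K$, where $K = {\rm Ker}\,\Hess{f}{x}$ and $N = K^\perp$. By the Morse--Bott hypothesis, ${\rm Crit}f$ near $x$ is a smooth submanifold $\set{C}$ with $\mathbf{T}_x\set{C} = K$. By the implicit function theorem applied to $\nabla f$ restricted to $N$ (whose derivative at $x$ is the invertible restriction of the Hessian to $N$), there is a local diffeomorphism straightening $\set{C}$ and splitting the variables as $y = (v,w)$ with $v\in N$, $w\in K$ (tangent to $\set{C}$), such that $\set{C}$ corresponds to $\{v=0\}$ and along $\set{C}$ the function is locally constant (equal to $f(x)$, after shrinking). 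The key structural fact, which is the heart of the Morse--Bott splitting lemma, is that in suitable coordinates
\[
f(v,w) = f(x) + Q(v),
\]
where $Q$ is a nondegenerate quadratic form on $N$ (the restriction of $\Hess{f}{x}$ to $N$), possibly after a further change of the $v$-coordinate; more robustly, without the full splitting lemma one has the Taylor/Morse-type estimate $|f(v,w)-f(x)| \le C\|v\|^2$ together with a lower bound $\|\nabla f(v,w)\| \ge c\|v\|$ on a neighborhood, both following from the nondegeneracy of the Hessian in the $N$-direction and the vanishing of $\nabla f$ on $\set{C}$.

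Second, with those two estimates in hand the conclusion is immediate: combine $|f(y)-f(x)| \le C\|v\|^2$ with $\|v\| \le c^{-1}\|\nabla f(y)\|$ to get $|f(y)-f(x)| \le (C/c^2)\|\nabla f(y)\|^2$, so $\sigma = C/c^2$ works on the neighborhood $\|y-x\|\le\delta$. To obtain the two estimates cleanly I would use the parametrized Morse lemma along $\set{C}$: for each fixed $w$, the function $v\mapsto f(v,w)$ has a nondegenerate critical point at $v=0$ with Hessian uniformly bounded and uniformly invertible (by continuity and compactness after shrinking), so a uniform-in-$w$ version of the Morse lemma gives the quadratic normal form with constants independent of $w$; the gradient bound then comes from differentiating the normal form.

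The main obstacle is the \emph{uniformity} in the $\set{C}$-direction: one must ensure that the splitting lemma / Morse lemma constants do not degenerate as $w$ varies over a neighborhood in $\set{C}$, and that the change of coordinates is jointly $C^1$ in $(v,w)$. This is handled by a parametrized version of the Morse lemma (e.g.\ via the Moser/homotopy trick applied fiberwise, using that the family of Hessians is continuous and uniformly invertible on a compact neighborhood), which is exactly the technical content of \cite[Theorem 3, Corollary 5]{feehan2018optimal}. Everything else---the orthogonal splitting, the implicit function theorem straightening of $\set{C}$, and the final algebraic combination of the two bounds---is routine.
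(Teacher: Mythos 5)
The paper does not prove this statement itself; it is imported verbatim from \cite{feehan2018optimal}, and the paper's accompanying remark simply notes that (for smooth functions) it is ``a simple corollary of Morse--Bott Lemma.'' Your proposal reconstructs exactly that standard argument, and it is correct.

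A few comments on the details. Your ``more robust'' variant --- avoiding the full splitting into $f(v,w)=f(x)+Q(v)$ and instead proving the two inequalities $|f(v,w)-f(x)|\le C\|v\|^2$ and $\|\nabla f(v,w)\|\ge c\|v\|$ directly --- is actually the preferable route here, since the statement is for $C^2$ functions (\Cref{def-local-morse-bott}) and the coordinate change delivered by the Morse--Bott splitting lemma can lose derivatives; the two Taylor-type bounds need only $C^2$ and continuity/compactness. Two points worth making explicit for a complete argument: (a) when you straighten $\set{C}$ via the implicit function theorem applied to $P_N\nabla f$, you obtain a $C^1$ submanifold $\{P_N\nabla f=0\}$ of dimension $\dim K$; you then need to argue that this coincides locally with $\set{C}$ (it contains $\set{C}$, both have the same dimension and the same tangent at $x$, so they agree near $x$), and (b) the constancy of $f$ on $\set{C}$ near $x$ follows from $\nabla f\equiv 0$ on a connected piece of $\set{C}$, which you use silently when writing $f(0,w)=f(x)$. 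With these spelled out, the combination $|f(y)-f(x)|\le C\|v\|^2\le (C/c^2)\|\nabla f(y)\|^2$ closes the argument. This matches what the paper gestures at; the actual proof in \cite{feehan2018optimal} is pitched at lower regularity, but for the present purposes your reconstruction is the same mechanism.
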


We can also easily deduce the same result  on a smooth manifold $\mathcal{M}$.

\begin{proposition}\label{cor:PLMorseBottManifold}
If $\set{U}\subseteq\mathcal{M}$ is an open subset and a $C^2$ function $f: \set{U} \to \RR$ is Morse-Bott at a stationary point $x$,  then there exist an open neighborhood $\set{V} \subseteq \set{U}$ of $x$ and $\sigma>0$  such that for all $y \in \set{V}$ it holds that
\begin{equation*}
|{f}(y)-{f}(x)| \leq \sigma \|\ProjGrad{f}{y}\|^2.
\end{equation*}
\end{proposition}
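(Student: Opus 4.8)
The plan is to pull the statement back to the Euclidean Morse--Bott inequality of \Cref{thm:PLMorseBottEuclidean} through the exponential map, used as a local chart at $x$. First I would fix $r>0$ small enough that $\text{Exp}_{x}$ restricts to a diffeomorphism from the open ball $\set{B}_{r}=\{\matr{\Delta}\in\Tang{x}{\mathcal{M}}:\|\matr{\Delta}\|<r\}$ onto an open subset of $\set{U}$ and that $\mathrm{D}\text{Exp}_{x}(\matr{\Delta})$ is invertible for every $\matr{\Delta}\in\set{B}_{r}$ (a normal ball). Identifying $\Tang{x}{\mathcal{M}}$ with $\RR^{m}$, $m=\dim\mathcal{M}$, set $g\eqdef f\circ\text{Exp}_{x}:\set{B}_{r}\to\RR$; then $g$ is $C^{2}$, being the composition of the $C^{2}$ function $f$ with the smooth map $\text{Exp}_{x}$. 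By the chain rule, $\nabla g(\matr{\Delta})=(\mathrm{D}\text{Exp}_{x}(\matr{\Delta}))^{*}\ProjGrad{f}{\text{Exp}_{x}(\matr{\Delta})}$ for all $\matr{\Delta}\in\set{B}_{r}$, and since $\mathrm{D}\text{Exp}_{x}(\vect{0}_{x})$ is the identity map, $\nabla g(\vect{0}_{x})=\ProjGrad{f}{x}$ and $\Hess{g}{\vect{0}_{x}}=\Hess{f}{x}$ --- the latter being precisely the definition of the Riemannian Hessian adopted in \Cref{sec:unitary_geom}.

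Next I would verify that $g$ is Morse--Bott at $\vect{0}_{x}$ in the Euclidean sense of \Cref{def-local-morse-bott}. Since $\mathrm{D}\text{Exp}_{x}(\matr{\Delta})$ is invertible on $\set{B}_{r}$, the gradient relation gives $\nabla g(\matr{\Delta})=0\iff\ProjGrad{f}{\text{Exp}_{x}(\matr{\Delta})}=0$, so the critical set of $g$ in $\set{B}_{r}$ is the diffeomorphic preimage $\text{Exp}_{x}^{-1}(\set{C})$ of the critical manifold $\set{C}=\set{U}\cap{\rm Crit}\,f$ provided by the Morse--Bott hypothesis on $f$; being the image of a smooth submanifold under a diffeomorphism, it is a relatively open smooth submanifold of $\RR^{m}$, which is condition (i). For condition (ii), since $\mathrm{D}\text{Exp}_{x}(\vect{0}_{x})=\mathrm{Id}$,
\[
\Tang{\vect{0}_{x}}{(\text{Exp}_{x}^{-1}(\set{C}))}=(\mathrm{D}\text{Exp}_{x}(\vect{0}_{x}))^{-1}[\Tang{x}{\set{C}}]=\Tang{x}{\set{C}}={\rm Ker}\,\Hess{f}{x}={\rm Ker}\,\Hess{g}{\vect{0}_{x}},
\]
where the third equality is the Morse--Bott property of $f$ and the last is $\Hess{g}{\vect{0}_{x}}=\Hess{f}{x}$ from the previous paragraph. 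Thus $g$ is Morse--Bott at $\vect{0}_{x}$, and \Cref{thm:PLMorseBottEuclidean} yields $\delta',\sigma'>0$ with $|g(\matr{\Delta})-g(\vect{0}_{x})|\le\sigma'\|\nabla g(\matr{\Delta})\|^{2}$ whenever $\|\matr{\Delta}\|\le\delta'$.

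Finally I would transport the estimate back to $\mathcal{M}$. Since $\matr{\Delta}\mapsto\|\mathrm{D}\text{Exp}_{x}(\matr{\Delta})\|$ is continuous with value $1$ at the origin, I can pick $\delta''\in(0,\min(r,\delta')]$ with $\|\mathrm{D}\text{Exp}_{x}(\matr{\Delta})\|\le 2$ on $\set{B}_{\delta''}$, and set $\set{V}=\text{Exp}_{x}(\set{B}_{\delta''})\subseteq\set{U}$, an open neighborhood of $x$. For $y=\text{Exp}_{x}(\matr{\Delta})\in\set{V}$,
\[
|f(y)-f(x)|=|g(\matr{\Delta})-g(\vect{0}_{x})|\le\sigma'\|\nabla g(\matr{\Delta})\|^{2}=\sigma'\|(\mathrm{D}\text{Exp}_{x}(\matr{\Delta}))^{*}\ProjGrad{f}{y}\|^{2}\le 4\sigma'\|\ProjGrad{f}{y}\|^{2},
\]
so the claim holds with $\sigma=4\sigma'$ and $\set{V}$ as above. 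The one genuinely delicate step is the middle one: checking that the chart carries the Morse--Bott data of $f$ onto that of $g$, i.e.\ that the critical set pulls back to a smooth submanifold and that the Hessian kernels match. This reduces to bookkeeping once one exploits that $\text{Exp}_{x}$ is a local diffeomorphism with $\mathrm{D}\text{Exp}_{x}(\vect{0}_{x})=\mathrm{Id}$ and that the paper's Riemannian Hessian is by definition the Euclidean Hessian of the pullback $f\circ\text{Exp}_{x}$; one must only remember to intersect the finitely many neighborhoods involved (the chart ball $\set{B}_{r}$, the Morse--Bott neighborhood of $f$, the radius $\delta'$ from \Cref{thm:PLMorseBottEuclidean}, and the ball on which $\|\mathrm{D}\text{Exp}_{x}\|\le 2$).
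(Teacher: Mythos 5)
Your proof is correct and follows essentially the same route as the paper's: pull $f$ back through the exponential map to $g = f\circ\text{Exp}_x$, observe that $\Hess{g}{\vect{0}_x}=\Hess{f}{x}$ by the definition adopted in the paper, transfer the Morse--Bott structure via the local diffeomorphism, invoke \Cref{thm:PLMorseBottEuclidean}, and push the inequality back using invertibility of $\mathrm{D}\text{Exp}_x$ near the origin. The only differences are cosmetic: you shrink explicitly to a normal ball $\set{B}_r$ and track the constant $\sigma=4\sigma'$ via the bound $\|\mathrm{D}\text{Exp}_x\|\le 2$, whereas the paper leaves the neighborhood-shrinking implicit and does not quantify the final constant.
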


\begin{proof}
Consider the exponential map ${\rm Exp}_x: \mathbf{T}_x {\mathcal{M}} \to \mathcal{M}$, which is a local diffeomorphism. Let $\set{W}\subseteq \mathbf{T}_x {\mathcal{M}}$ be an open subset such that ${\rm Exp}_x(\set{W})=\set{U}$. 
Let $\widehat{f} = f \circ {\rm Exp}_x$ be the composite map from $\set{W}$ to $\RR$. Then
\begin{equation}
\nabla \widehat{f} (y') = \mathbf{J}^{\T}_{{\rm Exp}_x} (y') \ProjGrad{f}{y} ,
\end{equation}
where $y'\in \set{W}$ and $y = {\rm Exp}_x(y')$.
It follows that ${\rm Exp}_x$ gives a diffeomorphism between ${\rm Crit}f$ and ${\rm Crit}\widehat{f}$.
Since $\Hess{f}{x} = {\rm H}_{\widehat{f}} (0)$ by \cite[Proposition 5.5.5]{Absil08:Optimization}, we have that $\widehat{f}$ is Morse-Bott at $0$.
Therefore, by \Cref{thm:PLMorseBottEuclidean}, there exist $\sigma'>0$, $\sigma>0$ and an open neighborhood $\set{V} \subseteq \set{U}$ of $x$ such that 
\begin{equation*}
|{f}(y)-{f}(x)| = |\widehat{f}(y')-\widehat{f}(0)| \leq \sigma' \|\nabla \widehat{f} (y')\|^2 \le  \sigma \|\ProjGrad{f}{y}\|^2,
\end{equation*}
for any $y\in \set{V}$, 
where the last inequality holds because $\mathbf{J}_{{\rm Exp}_x}$ is nonsingular in a neighborhood of $x$.
\end{proof}

\begin{remark}
For the case of non-degenerate stationary  points and $C^\infty$ functions, \Cref{cor:PLMorseBottManifold} is proved in \cite[Lemma 4.1]{Hu2018}, which is a simple corollary of Morse Lemma  \cite[Lemma 2.2]{milnor1963morse}.
For $C^{\infty}$ functions and Morse-Bott functions,  \Cref{cor:PLMorseBottManifold}  (as noted in \cite{feehan2018optimal}) is also a simple corollary of Morse-Bott Lemma \cite{banyaga2004}.
\end{remark}

\begin{remark}
Morse-Bott property is known to be useful for studying convergence properties.
For example, it is shown in \cite[Appendix C]{HelmM94:optimization} that if the cost function is (globally) Morse-Bott, i.e., satisfies the Morse-Bott property at all the stationary point, then the continuous gradient flow converges to a single point.
\end{remark}

Finally, we recall an important property of non-degenerate local maxima, which follows from the classic Morse Lemma
\cite{milnor1963morse}.

\begin{lemma}\label{lem:morse_maximum}
Let $x$ be a non-degenerate local maximum (according to  \Cref{def:nondegenerate})  of a smooth function $f$ such that $f(x) = c$.
Then there exists a simply connected open neighborhood $\set{W}$ of $x$ such that 
\begin{itemize}
\item $x$ is the only critical point in $\set{W}$;
\item its boundary is a level curve (i.e $f(y) = a < c$, for all $y \in \delta(\set{W})$;
\item the superlevel sets ${\set{W}}_b =  \{x \in {\set{W}}, {f}(x) \ge b > a \}$ are simply connected and nested.
\end{itemize}
\end{lemma}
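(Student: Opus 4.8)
The plan is to invoke the classical Morse Lemma \cite{milnor1963morse} and read off all three properties from the resulting normal form. First I would record that, since $x$ is a non-degenerate critical point of $f$ that is also a local maximum, the Riemannian Hessian $\Hess{f}{x}$ is negative definite, so its Morse index equals $m := \dim\mathcal{M}$. The Morse Lemma is a purely local statement and therefore applies inside a chart of $\mathcal{M}$: there is an open neighborhood $\set{U}$ of $x$ and a diffeomorphism $\varphi : \set{U} \to \varphi(\set{U}) \subseteq \RR^{m}$ with $\varphi(x) = 0$ such that
\[
f(y) = c - \|\varphi(y)\|^2 \qquad \text{for all } y \in \set{U}.
\]

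Next I would fix $\rho > 0$ small enough that the \emph{closed} ball $\overline{B(0,\rho)}$ is contained in $\varphi(\set{U})$, and set $\set{W} := \varphi^{-1}(B(0,\rho))$ and $a := c - \rho^2 < c$. All three claims then follow by inspection of the normal form. In the chart the differential of $f$ at $y$ is $-2\varphi(y)$, which vanishes only at $\varphi(y) = 0$, so $x$ is the unique critical point in $\set{W}$. Since $\varphi$ restricts to a homeomorphism from $\overline{\set{W}}$ onto $\overline{B(0,\rho)}$, the boundary of $\set{W}$ equals $\varphi^{-1}(\{\,\|\cdot\| = \rho\,\})$, which is exactly the level set $\{f = a\}$. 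For $b \in (a,c]$ the superlevel set is $\set{W}_b = \varphi^{-1}\big(\overline{B(0,\sqrt{c-b})}\big)$, a pulled-back closed ball; such sets are contractible, hence simply connected, and since $b \mapsto \sqrt{c-b}$ is decreasing they are nested. Finally $\set{W}$ itself is diffeomorphic to an open ball, so it is simply connected, which completes the verification.

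The argument is essentially bookkeeping once the Morse Lemma is available, so I do not expect a genuine obstacle. The only points that require a little attention are: choosing $\rho$ so that the whole \emph{closed} ball (not just the open one) lies in $\varphi(\set{U})$, which is what makes the boundary identification with $\{f=a\}$ clean; reading ``level curve'' in the statement as ``level set'' in dimension $m$; and noting that the conclusion is trivial when $m=0$ (in the applications of this paper $m = \dim{\tUN{n}} = n(n-1) \ge 2$).
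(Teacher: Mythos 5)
Your proof is correct and fills in exactly what the paper leaves implicit: the paper gives no proof of this lemma, merely remarking beforehand that it ``follows from the classic Morse Lemma,'' which is precisely the route you take. The details you supply (negative-definite Hessian forces full Morse index, the $f = c - \|\varphi(\cdot)\|^2$ normal form, choosing $\rho$ so the \emph{closed} ball lies in the chart domain so that $\partial\set{W}$ is cleanly identified with the level set $\{f=a\}$, superlevel sets as pulled-back closed balls) are the routine bookkeeping the authors omitted, and they are all accurate.
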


\begin{remark}\label{rem:negative_deriv_geodesic}
In \Cref{lem:morse_maximum}, we can also select the neighborhood in such a way that Hessian is negative definite at each point $y$, which implies that  for any geodesic\footnote{A related discussion on geodesic convexity of functions can be found in  \cite{rapcsak1991}.} $\gamma(t)$ passing through $y$, $\gamma(0) = y$, 
the second derivative of $f(\gamma(t))$ at $0$ is  negative.
\end{remark}

\section{Convergence results  based on \L{}ojasiewicz inequality}\label{sec-main-results}
\subsection{Preliminary lemmas: checking the decrease conditions}\label{sec-decrease-conditions}
In this subsection, we are going to find some sufficient  conditions for \eqref{eq:sufficient_descent} and \eqref{eq:safeguard} to hold in \Cref{Jacobi-G-general}, which will  allow us to use \Cref{theorem-SU15}.

Let $\matr{U}_{k} = \matr{U}_{k-1} \Gmat{i_k}{j_k}{\matr{\Utwo}_k}$ be the iterations in \Cref{Jacobi-G-general}.  Obviously,
\begin{equation*}
 \|\matr{U}_{k}- \matr{U}_{k-1}\|  = \| \matr{\Utwo}_k-\matr{I}_2\|.
\end{equation*} 
Assume that $\matr{\Utwo}_k$ is obtained as in \Cref{lem-cost-quadratic-form}, i.e., by taking $\vect{w}$ as the leading eigenvector of $\Gamij{i_k}{j_k}{\matr{U}_{k-1}}$  (normalized so that $w_1 = \cos 2\theta =2c^2-1> 0$ in \eqref{eq:w_definition}) as in \Cref{remark-eigenvectors},  and retrieving $\matr{\Psi}_k$ from $\vect{w}$ according to \eqref{unitary-para-2} and  \eqref{eq:w_definition}.
We first express  $\|\matr{\Utwo}_k-\matr{I}_2\|$ through $w_1$. 

\begin{lemma}\label{lem:norm_equiv}
For the iterations  $\matr{\Utwo}_k$  obtained as in \Cref{lem-cost-quadratic-form}, it  holds that
\begin{equation}\label{eq-itera-control}
\sqrt{2}  \|\matr{\Utwo}_k- \matr{I}_{2}\| \ge \sqrt{1-w_1^2} \ge \frac{ \sqrt{ \sqrt{2}  +2} }{2}   \|\matr{\Utwo}_k- \matr{I}_{2}\|
\end{equation}
\end{lemma}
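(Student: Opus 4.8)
The statement is purely a computation relating $\|\matr{\Utwo}_k - \matr{I}_2\|$ to the scalar $w_1 = 2c^2 - 1 = \cos 2\theta$. First I would write out $\matr{\Utwo}_k - \matr{I}_2$ explicitly in the parametrization \eqref{unitary-para-2}, namely
\[
\matr{\Utwo}_k - \matr{I}_2 = \begin{bmatrix} c - 1 & -s \\ s^{*} & c - 1\end{bmatrix},
\]
so that $\|\matr{\Utwo}_k - \matr{I}_2\|^2 = 2(c-1)^2 + 2|s|^2$. Using $c^2 + |s|^2 = 1$ this becomes $2(c-1)^2 + 2(1 - c^2) = 4(1-c)$. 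On the other hand, from $w_1 = 2c^2 - 1$ and $c \in [\tfrac{\sqrt 2}{2}, 1]$ (guaranteed by \Cref{alg-eigenvector-of-Gamma}/\Cref{remark-eigenvectors}), we get $1 - w_1^2 = (1-w_1)(1+w_1) = 2(1-c^2)\cdot 2c^2 = 4c^2(1-c)(1+c)$. Hence the whole inequality reduces to comparing $\|\matr{\Utwo}_k - \matr{I}_2\|^2 = 4(1-c)$ with $1 - w_1^2 = 4c^2(1-c)(1+c)$; after cancelling the common factor $4(1-c)$ (which is nonnegative, and the bound is trivial when $c = 1$), the claim is equivalent to
\[
\tfrac{1}{2} \;\le\; c^2(1+c) \;\le\; 2 \cdot \frac{\sqrt 2 + 2}{4} \cdot \frac{1}{1} \cdot \frac{1}{\,\|\matr{\Utwo}_k-\matr{I}_2\|^2/(4(1-c))\,}
\]
— more precisely, squaring the two ends of \eqref{eq-itera-control} and substituting, one needs $1-w_1^2 \le 2\|\matr{\Utwo}_k-\matr{I}_2\|^2$, i.e. $c^2(1+c) \le 2$, and $1-w_1^2 \ge \tfrac{\sqrt 2 + 2}{4}\|\matr{\Utwo}_k-\matr{I}_2\|^2$, i.e. $c^2(1+c) \ge \tfrac{\sqrt 2 + 2}{4}$.

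So the real content is the elementary two-sided bound on the scalar function $\psi(c) = c^2(1+c)$ for $c$ ranging over $[\tfrac{\sqrt 2}{2}, 1]$. The function $\psi$ is increasing on this interval, so $\psi(c) \le \psi(1) = 2$, giving the upper bound (left inequality of \eqref{eq-itera-control}) with equality at $c = 1$; and $\psi(c) \ge \psi(\tfrac{\sqrt 2}{2}) = \tfrac{1}{2}\cdot(1 + \tfrac{\sqrt 2}{2}) = \tfrac{1}{2} + \tfrac{\sqrt 2}{4} = \tfrac{2 + \sqrt 2}{4}$, which is exactly the constant appearing on the right-hand side of \eqref{eq-itera-control}. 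This last evaluation pins down where the constant $\tfrac{\sqrt{\sqrt 2 + 2}}{2}$ comes from: its square is $\tfrac{\sqrt 2 + 2}{4} = \psi(\tfrac{\sqrt 2}{2})$.

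The main (only) obstacle is bookkeeping: making sure the degenerate case $c = 1$ (i.e. $\matr{\Utwo}_k = \matr{I}_2$, $w_1 = 1$) is handled — there both sides vanish and the inequalities hold trivially — and keeping the monotonicity argument for $\psi$ clean on the closed interval $[\tfrac{\sqrt 2}{2},1]$, which in turn relies on the normalization $c \ge \tfrac{\sqrt 2}{2}$ from \Cref{remark-eigenvectors}. No deeper tools are needed; the lemma is a direct substitution followed by one monotone-function estimate.
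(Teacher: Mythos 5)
Your proposal is correct and follows essentially the same route as the paper's own proof: compute $\|\matr{\Utwo}_k-\matr{I}_2\|^2 = 4(1-c)$ and $1-w_1^2 = 4c^2(1-c)(1+c)$, so the squared ratio is $c^2(1+c)$, and bound it on $[\tfrac{\sqrt 2}{2},1]$ by monotonicity (the paper works with the unsquared ratio $c\sqrt{1+c}$, an inessential cosmetic difference). The only blemish is the garbled intermediate display containing a spurious ``$\tfrac12 \le c^2(1+c)$'' --- but you immediately correct this in prose to the right lower bound $\tfrac{2+\sqrt 2}{4}$, so the argument as a whole is sound.
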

\begin{proof}
Note that
\begin{equation*}
 \|\matr{\Utwo}_{k}- \matr{I}_2\|   = \left\| \begin{bmatrix}
 c -1 & -s \\
  s^{\ast} & c-1
 \end{bmatrix}\right\| =  \sqrt{2 (1-c)^2  + 2(1-c^2)}  =   2 \sqrt{1-c}. 
\end{equation*}
Next, we note that $\sqrt{1-w_1^2} = {2}c \sqrt{1-c^2}$ and
\[
\frac{\sqrt{1-w_1^2}}{ \|\matr{\Utwo}_{k}- \matr{I}_2\|} = \frac{{2}c \sqrt{1-c^2}}{2 \sqrt{1-c}} = c \sqrt{1+c}.
\]
By \Cref{remark-eigenvectors}, we have $c \in [\frac{1}{\sqrt{2}}; 1]$, hence the ratio can be bounded from above by its values at the endpoints of the interval. 
\end{proof}

Since we are looking at \Cref{Jacobi-G-general}, we can replace in both inequalities of \eqref{eq-itera-control} $\ProjGrad{f}{\matr{U}_{k-1}}$ with
$\ProjGrad{\hij{i}{j}{\matr{U}}}{\uI_2}$ . 
Next, we prove a result for condition \eqref{eq:safeguard}. 

\begin{lemma}\label{lem:safeguard_Gamma}
Let $f: \UN{n} \to \RR$ be  as in \Cref{lem-cost-quadratic-form}. 
Then there exists a universal constant $\kappa>0$ such that 
\begin{equation*}
\|\matr{\Utwo}_k - \matr{I}_{2}\| \ge \kappa \|\ProjGrad{h_k}{\uI_2}\|.
\end{equation*}
\end{lemma}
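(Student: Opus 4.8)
The plan is to reduce the statement to a uniform bound on the off-diagonal-like entries $\Gamma_{12}$ and $\Gamma_{13}$ of $\matr{\Gamma} = \Gamij{i_k}{j_k}{\matr{U}_{k-1}}$. By \Cref{prop:dir_deriv}, $\ProjGrad{h_k}{\uI_2}$ has only two nonzero entries, $\pm 2\Gamma_{12} + 2\ui\Gamma_{13}$, so $\|\ProjGrad{h_k}{\uI_2}\| = 2\sqrt{2}\,(\Gamma_{12}^2 + \Gamma_{13}^2)^{1/2}$. On the other side, \Cref{lem:norm_equiv} gives $\|\matr{\Utwo}_k - \matr{I}_2\| \ge \tfrac{1}{\sqrt{2}}\,(1-w_1^2)^{1/2}$, where $\vect{w}$ is the unit leading eigenvector of $\matr{\Gamma}$ (normalized with $w_1 \ge 0$) used to build $\matr{\Utwo}_k$. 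Hence it suffices to establish an inequality of the form $(1-w_1^2)^{1/2} \ge c\,(\Gamma_{12}^2 + \Gamma_{13}^2)^{1/2}$ with $c>0$ independent of $k$, of the pair $(i_k,j_k)$, and of $\matr{U}_{k-1}$; then $\kappa = c/(4)$ will work.

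To obtain such a $c$, I will use the eigen-equation $\matr{\Gamma}\vect{w} = \lambda_{\rm max}(\matr{\Gamma})\vect{w}$. Reading off its second and third components gives $\Gamma_{12}w_1 = (\lambda_{\rm max}(\matr{\Gamma}) - \Gamma_{22})w_2 - \Gamma_{23}w_3$ and the analogous identity for $\Gamma_{13}w_1$. Bounding the right-hand sides by Cauchy--Schwarz and using $w_2^2 + w_3^2 = 1 - w_1^2$ yields $w_1\,(\Gamma_{12}^2 + \Gamma_{13}^2)^{1/2} \le C_\Gamma\,(1-w_1^2)^{1/2}$, where $C_\Gamma$ depends only on a uniform upper bound on the entries of $\matr{\Gamma}$ and on $|\lambda_{\rm max}(\matr{\Gamma})| \le \|\matr{\Gamma}\|$. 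Such a uniform bound exists: by \Cref{lem-cost-quadratic-form} the entries of $\Gamij{i}{j}{\matr{U}}$ are polynomials in the real and imaginary parts of $\matr{U}$, hence bounded over the compact manifold $\UN{n}$, and there are only finitely many pairs $(i,j)$; so $C_\Gamma$ depends only on the fixed problem data (the tensors $\tens{A}^{(\ell)}$ or $\tens{B}$).

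Finally I will split on the size of $w_1$, which by \Cref{remark-eigenvectors} satisfies $w_1 = 2c^2 - 1 \in [0,1]$ with $c \in [\tfrac{\sqrt{2}}{2},1]$. If $w_1 \ge \tfrac12$, the displayed inequality immediately gives $(\Gamma_{12}^2 + \Gamma_{13}^2)^{1/2} \le 2C_\Gamma\,(1-w_1^2)^{1/2}$. If $w_1 < \tfrac12$, then $(1-w_1^2)^{1/2} > \tfrac{\sqrt{3}}{2}$, while $(\Gamma_{12}^2 + \Gamma_{13}^2)^{1/2}$ is bounded above by the same uniform constant, so again $(\Gamma_{12}^2 + \Gamma_{13}^2)^{1/2} \le C'\,(1-w_1^2)^{1/2}$. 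Taking the larger of the two constants and chaining with \Cref{lem:norm_equiv} and the expression $\|\ProjGrad{h_k}{\uI_2}\| = 2\sqrt{2}\,(\Gamma_{12}^2+\Gamma_{13}^2)^{1/2}$ produces an explicit universal $\kappa$.

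The one delicate point is the factor $1/w_1$ in the eigen-equation estimate, which blows up precisely when $\matr{I}_2$ is far from the maximizer $\matr{\Utwo}_k$; but this is exactly the regime where $(1-w_1^2)^{1/2}$ is bounded below by an absolute constant, so the case split dissolves the difficulty. The only other care needed is to keep all constants genuinely uniform, which rests on compactness of $\UN{n}$ together with the polynomial dependence in \Cref{lem-cost-quadratic-form} and the finiteness of the index set.
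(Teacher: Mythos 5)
Your proposal is correct, and the skeleton (reduce to a uniform bound of the form $(\Gamma_{12}^2+\Gamma_{13}^2)^{1/2} \le C\,(1-w_1^2)^{1/2}$, then chain with \Cref{lem:norm_equiv} and \Cref{prop:dir_deriv}) matches the paper. But the way you obtain the key inequality is genuinely different. You read off the second and third rows of the eigen-equation $\matr{\Gamma}\vect{w} = \lambda_{\rm max}\vect{w}$, apply Cauchy--Schwarz to get $w_1(\Gamma_{12}^2+\Gamma_{13}^2)^{1/2}\le C_\Gamma (1-w_1^2)^{1/2}$, and then dispose of the troublesome factor $1/w_1$ by splitting on $w_1\gtrless\tfrac12$. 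The paper instead shifts $\matr{\Gamma}'=\matr{\Gamma}-\lambda_3\matr{I}_3$ and writes $\matr{\Gamma}'=\mu_1\vect{w}\vect{w}^{\T}+\mu_2\vect{v}\vect{v}^{\T}$, then computes $\Gamma_{12}^2+\Gamma_{13}^2=(\Gamma'_{12})^2+(\Gamma'_{13})^2$ directly; after expanding and using orthonormality (in particular $w_2v_2+w_3v_3=-w_1v_1$ and $v_1^2\le 1-w_1^2$) the cross term is nonpositive and one lands cleanly on $\Gamma_{12}^2+\Gamma_{13}^2\le(1-w_1^2)(\mu_1^2+\mu_2^2)$, with no division by $w_1$ and no case split. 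The paper's route is therefore slightly tighter and more economical, and it also sets up the spectral decomposition $\mu_1\vect{w}\vect{w}^{\T}+\mu_2\vect{v}\vect{v}^{\T}$ that is re-used verbatim in the proof of \Cref{lem:sufficient_decrease_Gamma}, whereas your argument is self-contained but wouldn't share that structure. Both approaches rely on the same compactness observation (the entries of $\Gamij{i}{j}{\matr{U}}$ are polynomials in a compact set, and there are finitely many pairs $(i,j)$) to make the constant uniform.
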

\begin{proof}
We denote $\matr{\Gamma}=\Gamij{i_k}{j_k}{\matr{U}_{k-1}}$ as in \eqref{eq-cost-quadratic-form}. By \Cref{prop:dir_deriv}, we have that
\begin{equation*}
\|\ProjGrad{h_k}{\uI_2}\| = 2\sqrt{2} \sqrt{\Gamma_{12}^2 + \Gamma_{13}^2}.
\end{equation*}
By \Cref{lem:norm_equiv}, it is sufficient to prove that
\begin{equation*}
1-w_1^2 \ge \kappa' (\Gamma_{12}^2 + \Gamma_{13}^2)
\end{equation*}
for a universal constant $\kappa'>0$.  Let $\lambda_1\geq\lambda_2\geq\lambda_3$  be the eigenvalues of $\matr{\Gamma}$.
Without loss of generality, we set $\matr{\Gamma}' = \matr{\Gamma} - \lambda_3 \matr{I}_3$,
$\mu_1 = \lambda_1 - \lambda_3$ and $\mu_2 = \lambda_2 - \lambda_3$.
Then
\begin{equation}\label{eq:Gamma_EVD}
\matr{\Gamma}' =  \mu_1 \vect{w} \vect{w}^{\T} + \mu_2 \vect{v}\vect{v}^{\T},
\end{equation}
where $\vect{v}$ is the second eigenvector of $\matr{\Gamma}$.
It follows  that
\begin{equation}\label{eq:Gamma1213}
\begin{split}
\Gamma_{12}^2 + \Gamma_{13}^2 &= (\Gamma'_{12})^2 + (\Gamma'_{13})^2 = (\mu_1 w_1 w_2 + \mu_2 v_1 v_2)^2 + (\mu_1 w_1 w_3 + \mu_2 v_1 v_3)^2 \\
& =   \mu_1^2 w_{1}^2 (w_{2}^2 + w_3^2)  + 2\mu_1\mu_2 w_1 v_1 (w_2 v_2 + w_3 v_3)+ \mu^2_2 v_{1}^2 (v_{2}^2 + v_3^2) \\
& = \mu_1^2 w_{1}^2 (1- w_{1}^2) - 2\mu_1\mu_2 w_1^2 v_1^2 + \mu^2_2 v_{1}^2 (1- v_{1}^2) \\
&  \le (1-w_1^2) (\mu_1^2 + \mu_2^2), 
\end{split}
\end{equation}
where the last equality and inequality is due to orthonormality of $\vect{v}$ and $\vect{w}$  (which implies  $v_1^2 \le 1- w_1^2$).
By expanding $\mu_1$ and $\mu_2$, it is not difficult to verify that $\mu_1^2 + \mu_2^2 \le 2\|\Gamma\|^2$.
Finally, by \Cref{thm:elementary_updates}, the elements of  $\matr{\Gamma}$ continuously depend on $\matr{U}\in\UN{n}$.
Therefore, $\|\matr{\Gamma}\|$ is bounded from above, and thus the proof is completed. 
\end{proof}

We are ready to  check the sufficient decrease condition \eqref{eq:sufficient_descent}.
\begin{lemma}\label{lem:sufficient_decrease_Gamma}
Let $\matr{\Gamma}=\Gamij{i_k}{j_k}{\matr{U}_{k-1}}$ be as in \eqref{eq-cost-quadratic-form}. 
Let $\lambda_1\geq\lambda_2\geq\lambda_3$  be the eigenvalues of $\matr{\Gamma}$, and $\eta = \frac{\lambda_2 - \lambda_3}{\lambda_1 - \lambda_3}$.
Suppose that $1 -\eta \ge \varepsilon$ for some $\varepsilon>0$. Then
\begin{equation*}
|h_k(\matr{\Utwo}_k) - h_k(\matr{I}_2)|\ge \frac{\varepsilon}{4}\|\ProjGrad{h_k}{\matr{I}_2}\| \sqrt{1-w_1^2}.
\end{equation*}
\end{lemma}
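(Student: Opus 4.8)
The plan is to reduce everything to the symmetric $3\times 3$ matrix $\matr{\Gamma} = \Gamij{i_k}{j_k}{\matr{U}_{k-1}}$ and to reuse the eigendecomposition already set up in the proof of \Cref{lem:safeguard_Gamma}. First I would compute the left-hand side exactly. By \Cref{lem-cost-quadratic-form}, $h_k(\matr{\Utwo}) = \vect{r}^{\T}\matr{\Gamma}\vect{r} + C$ with $\vect{r}(\matr{I}_2) = \vect{e}_1$ and, by \Cref{remark-eigenvectors} (or the construction in \Cref{alg-eigenvector-of-Gamma}), $\vect{r}(\matr{\Utwo}_k) = \vect{w}$, the unit-norm leading eigenvector of $\matr{\Gamma}$ with $w_1\ge 0$. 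Hence $h_k(\matr{I}_2) = \Gamma_{11} + C$ and $h_k(\matr{\Utwo}_k) = \lambda_1 + C$, so that
\[
|h_k(\matr{\Utwo}_k) - h_k(\matr{I}_2)| = \lambda_1 - \Gamma_{11} \ge 0,
\]
the nonnegativity being the Rayleigh quotient bound.

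Next I would lower-bound $\lambda_1-\Gamma_{11}$ using the decomposition $\matr{\Gamma}' = \matr{\Gamma} - \lambda_3\matr{I}_3 = \mu_1\vect{w}\vect{w}^{\T} + \mu_2\vect{v}\vect{v}^{\T}$ from \eqref{eq:Gamma_EVD}, where $\mu_1 = \lambda_1-\lambda_3$, $\mu_2 = \lambda_2-\lambda_3$ and $\vect{v}$ is the second eigenvector. Since $\lambda_1 - \Gamma_{11} = \mu_1 - \Gamma'_{11} = \mu_1 - \mu_1 w_1^2 - \mu_2 v_1^2 = \mu_1(1-w_1^2) - \mu_2 v_1^2$, using $\mu_2 = \eta\mu_1$, the orthonormality bound $v_1^2 \le 1-w_1^2$ (already exploited in \Cref{lem:safeguard_Gamma}), and the hypothesis $1-\eta\ge\varepsilon$, I obtain
\[
\lambda_1 - \Gamma_{11} \ge \mu_1(1-w_1^2) - \eta\mu_1(1-w_1^2) = \mu_1(1-\eta)(1-w_1^2) \ge \varepsilon\,\mu_1(1-w_1^2).
\]

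For the right-hand side I would invoke \Cref{prop:dir_deriv}, giving $\|\ProjGrad{h_k}{\matr{I}_2}\| = 2\sqrt{2}\sqrt{\Gamma_{12}^2+\Gamma_{13}^2}$, and then bound $\Gamma_{12}^2+\Gamma_{13}^2$ from above. Reusing the expansion \eqref{eq:Gamma1213} from the proof of \Cref{lem:safeguard_Gamma},
\[
\Gamma_{12}^2+\Gamma_{13}^2 = \mu_1^2 w_1^2(1-w_1^2) - 2\mu_1\mu_2 w_1^2 v_1^2 + \mu_2^2 v_1^2(1-v_1^2),
\]
I would discard the nonpositive middle term, bound the first summand by $\mu_1^2(1-w_1^2)$ (since $w_1^2\le 1$) and the third by $\mu_1^2(1-w_1^2)$ (using $\mu_2\le\mu_1$, $1-v_1^2\le1$ and $v_1^2\le1-w_1^2$), so that $\Gamma_{12}^2+\Gamma_{13}^2 \le 2\mu_1^2(1-w_1^2)$ and hence $\sqrt{\Gamma_{12}^2+\Gamma_{13}^2}\le\sqrt{2}\,\mu_1\sqrt{1-w_1^2}$. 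Plugging this in,
\[
\frac{\varepsilon}{4}\|\ProjGrad{h_k}{\matr{I}_2}\|\sqrt{1-w_1^2} = \frac{\varepsilon\sqrt{2}}{2}\sqrt{\Gamma_{12}^2+\Gamma_{13}^2}\,\sqrt{1-w_1^2} \le \varepsilon\,\mu_1(1-w_1^2),
\]
which is at most $\lambda_1-\Gamma_{11} = |h_k(\matr{\Utwo}_k)-h_k(\matr{I}_2)|$ by the previous step, completing the proof. The only genuinely delicate point is the upper bound $\Gamma_{12}^2+\Gamma_{13}^2\le 2\mu_1^2(1-w_1^2)$; everything else is bookkeeping. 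The degenerate configurations ($\mu_1=0$, i.e. $\matr{\Gamma}$ scalar, or $w_1^2=1$) make both sides vanish and are trivial, and the case $\lambda_1=\lambda_2>\lambda_3$ — where $\vect{w}$ is not unique — is excluded by $1-\eta\ge\varepsilon>0$.
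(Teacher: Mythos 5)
Your proof is correct and follows essentially the same route as the paper's: both reduce the left-hand side to $\lambda_1-\Gamma_{11}$, use the decomposition $\matr{\Gamma}-\lambda_3\matr{I}_3=\mu_1\vect{w}\vect{w}^{\T}+\mu_2\vect{v}\vect{v}^{\T}$, and bound $\lambda_1-\Gamma_{11}\ge\mu_1(1-\eta)(1-w_1^2)$ and $\Gamma_{12}^2+\Gamma_{13}^2\le 2\mu_1^2(1-w_1^2)$ (the paper obtains the marginally sharper $\mu_1^2(1+\eta^2)(1-w_1^2)$, but then also relaxes $1+\eta^2\le 2$). The only cosmetic difference is that the paper packages the argument into a bound on the ratio $q(\matr{\Gamma},\vect{w})$ from \eqref{eq:ratio_eigenvectors}, whereas you compare the two sides directly.
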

\begin{proof}
Define the ratio
\begin{equation}\label{eq:ratio_eigenvectors}
q(\matr{\Gamma},\vect{w}) = \frac{(\vect{w}^{\T} \matr{\Gamma} \vect{w}  - \Gamma_{11})^2}{(\Gamma_{12}^2 + \Gamma_{13}^2) (1-w_1^2)}.
\end{equation}
It is sufficient to prove that $q(\matr{\Gamma},\vect{w})  \ge {\varepsilon^2}/{2}$.
Denote $\rho \eqdef 1- w_1^2 \ge v_1^2$, 
where $\vect{v}$ is as  in the proof of \Cref{lem:safeguard_Gamma}.
From  \eqref{eq:Gamma_EVD} and  \eqref{eq:Gamma1213} we immediately have  
\begin{align}
&\vect{w}^{\T} \matr{\Gamma} \vect{w}  - \Gamma_{11} = \mu_1 - (\mu_1 w_1^2 + \mu_2 v_1^2) = \mu_1(\rho - \eta v_1^2) \ge \mu_1 \rho (1- \eta), \label{eq-h-increase-01} \\
&\Gamma_{12}^2 + \Gamma_{13}^2   \le \rho \mu_1^2 (1+\eta^2). \label{eq-h-increase-02}
\end{align}
Using  \eqref{eq-h-increase-01} and \eqref{eq-h-increase-02}, we get
\begin{equation*}
\begin{split}
\frac{1}{q(\matr{\Gamma},\vect{w})} & =
 \frac{(\Gamma_{12}^2 + \Gamma_{13}^2)\rho}{(\vect{w}^{\T} \matr{\Gamma} \vect{w}  - \Gamma_{11})^2}
 \le \frac{\rho^2 \mu_1^2 (1+\eta^2)}{\rho^2 \mu_1^2 (1-\eta)^2}  \le \frac{2}{\varepsilon^2}. 
\end{split}
\end{equation*}
The proof is complete. 
\end{proof}

\subsection{Main results}\label{sec:results_local_linear}
\begin{theorem}\label{thm:accumulation_points}
Let  $f:\UN{n}\rightarrow\RR$ be as in \Cref{lem-cost-quadratic-form},
and $\overline{\matr{U}}$ be  an accumulation point of \Cref{Jacobi-G-general} (and $\ProjGrad{f}{\overline{\matr{U}}} = 0$  by \Cref{theorem-local-gene}).
Assume that $\HessPS{i}{j}{\overline{\matr{U}}}$  defined in \eqref{eq:hess_2x2} is  negative definite for all $i < j$. 
Then
\begin{enumerate}[(i)]
\item $\overline{\matr{U}}$ is the only limit  point and convergence rates in \Cref{theorem-SU15} apply.
\item  If the rank of Riemannian Hessian is maximal at $\overline{\matr{U}}$ (i.e., $\rank{\Hess{f}{\overline{\matr{U}}}} = n(n-1)$), then the speed of convergence is linear.
\end{enumerate}
\end{theorem}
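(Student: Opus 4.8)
The plan is to verify the hypotheses of \Cref{theorem-SU15} (in the relaxed form of \Cref{rem:sufficient_descent_relax}) and, for part (ii), to pin down the \L{}ojasiewicz exponent at $\overline{\matr{U}}$ via the Morse--Bott property and \Cref{cor:PLMorseBottManifold}. Note first that $\UN{n}$ is a real-analytic submanifold of $\CC^{n\times n}$ and $f$ is polynomial in the real and imaginary parts of $\matr{U}$, hence real analytic, so by \Cref{lemma-SU15} a \L{}ojasiewicz inequality \eqref{eq:Lojasiewicz} holds at $\overline{\matr{U}}$ with some exponent $\zeta\in(0,\tfrac12]$. The key preliminary observation is a uniform eigenvalue gap near $\overline{\matr{U}}$. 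Since $\overline{\matr{U}}$ is stationary, $\matr{\Lambda}(\overline{\matr{U}}) = 0$, so by \Cref{prop:dir_deriv} we have $\Gamij{i}{j}{\overline{\matr{U}}}_{12} = \Gamij{i}{j}{\overline{\matr{U}}}_{13} = 0$ for all $i<j$, i.e.\ $\matr{I}_2$ is stationary for every restriction $\hij{i}{j}{\overline{\matr{U}}}$. Combining the negative-definiteness hypothesis with \Cref{eq-Dij-euivalence}, the largest eigenvalue of $\Gamij{i}{j}{\overline{\matr{U}}}$ equals $\Gamij{i}{j}{\overline{\matr{U}}}_{11}$ and is strictly larger than the second, so $\eta^{(i,j)}(\overline{\matr{U}}) \eqdef \frac{\lambda_2-\lambda_3}{\lambda_1-\lambda_3} < 1$. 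Since the entries of $\Gamij{i}{j}{\matr{U}}$ depend continuously on $\matr{U}$ by \Cref{thm:elementary_updates}, the eigenvalues and hence $\eta^{(i,j)}(\matr{U})$ vary continuously near $\overline{\matr{U}}$; as there are finitely many pairs, I can fix a neighborhood $\mathcal{N}$ of $\overline{\matr{U}}$ and $\varepsilon>0$ with $1-\eta^{(i,j)}(\matr{U}) \ge \varepsilon$ for all $\matr{U}\in\mathcal{N}$ and all $i<j$.

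For part (i), I would check conditions (i)--(ii) of \Cref{theorem-SU15} together with the safeguard \eqref{eq:safeguard}. For $k$ with $\matr{U}_{k-1}\in\mathcal{N}$ we have $1-\eta^{(i_k,j_k)}(\matr{U}_{k-1})\ge\varepsilon$, so \Cref{lem:sufficient_decrease_Gamma}, \Cref{lem:norm_equiv}, the identity $\|\matr{U}_k-\matr{U}_{k-1}\| = \|\matr{\Utwo}_k-\matr{I}_2\|$, and the selection rule \eqref{inequality-gra-based} chain to
\[
|f(\matr{U}_{k})-f(\matr{U}_{k-1})| \ \ge\ \tfrac{\varepsilon\delta\sqrt{\sqrt{2}+2}}{8}\,\|\ProjGrad{f}{\matr{U}_{k-1}}\|\,\|\matr{U}_{k}-\matr{U}_{k-1}\|,
\]
which is \eqref{eq:sufficient_descent}; by \Cref{rem:sufficient_descent_relax} it suffices that this holds for $\matr{U}_{k-1}\in\mathcal{N}$. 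For condition (ii): if $\ProjGrad{f}{\matr{U}_{k-1}} = 0$ with $\matr{U}_{k-1}\in\mathcal{N}$, then $\matr{\Lambda}(\matr{U}_{k-1}) = 0$, so $\Gamij{i_k}{j_k}{\matr{U}_{k-1}}$ has vanishing off-diagonal entries in its first row/column and, shrinking $\mathcal{N}$ if needed, $\Gamij{i_k}{j_k}{\matr{U}_{k-1}}_{11}$ is still its strictly largest eigenvalue; hence its leading eigenvector is $\vect{e}_1$, so $\matr{\Utwo}_k = \matr{I}_2$ and $\matr{U}_k = \matr{U}_{k-1}$. The safeguard \eqref{eq:safeguard} holds unconditionally: by \Cref{lem:safeguard_Gamma} and \eqref{inequality-gra-based}, $\|\matr{U}_k-\matr{U}_{k-1}\| = \|\matr{\Utwo}_k-\matr{I}_2\| \ge \kappa\|\ProjGrad{h_k}{\matr{I}_2}\| \ge \kappa\delta\|\ProjGrad{f}{\matr{U}_{k-1}}\|$. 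Therefore \Cref{theorem-SU15} gives that $\overline{\matr{U}}$ is the only limit point, with rate $Ce^{-ck}$ if $\zeta=\tfrac12$ and $Ck^{-\zeta/(1-2\zeta)}$ otherwise.

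For part (ii), the additional hypothesis $\rank{\Hess{f}{\overline{\matr{U}}}} = n(n-1)$ is used to force $\zeta=\tfrac12$. By \Cref{lem:f_invariant_Hessian}, the vectors $\matr{Z}_1,\dots,\matr{Z}_n$ spanning the tangent space of the scaling orbit $\overline{\matr{U}}\cdot\mathbb{T}^n$ lie in ${\rm Ker}\,\Hess{f}{\overline{\matr{U}}}$, and the rank condition forces $\dim {\rm Ker}\,\Hess{f}{\overline{\matr{U}}} = n^2-n(n-1) = n$; hence ${\rm Ker}\,\Hess{f}{\overline{\matr{U}}}$ is exactly the tangent space of this $n$-dimensional orbit, every point of which is stationary by \Cref{lem:grad_scale_invariant}. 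Passing to the quotient ${\tUN{n}}$, the point $\widetilde{\overline{\matr{U}}}$ is a non-degenerate critical point of $\widetilde f$ (its Hessian, the restriction of $\Hess{f}{\overline{\matr{U}}}$ to the horizontal space, is nonsingular on the $n(n-1)$-dimensional $\Tang{\widetilde{\overline{\matr{U}}}}{{\tUN{n}}}$), hence isolated in ${\rm Crit}\widetilde f$; since $\pi:\UN{n}\to{\tUN{n}}$ is a Riemannian submersion and ${\rm Crit}f = \pi^{-1}({\rm Crit}\widetilde f)$, it follows that ${\rm Crit}f$ coincides in a neighborhood of $\overline{\matr{U}}$ with the orbit $\overline{\matr{U}}\cdot\mathbb{T}^n$. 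Thus $f$ is Morse--Bott at $\overline{\matr{U}}$ in the sense of \Cref{def-local-morse-bott}, so by \Cref{cor:PLMorseBottManifold} it satisfies the Polyak--\L{}ojasiewicz inequality, i.e.\ $\zeta=\tfrac12$, and the rate in part (i) becomes $\|\matr{U}_k-\overline{\matr{U}}\| \le Ce^{-ck}$.

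The hardest part will be the Morse--Bott verification in part (ii): one has to pass to the quotient manifold ${\tUN{n}}$ and show that the maximal-rank condition on $\Hess{f}{\overline{\matr{U}}}$ confines the local critical set of $f$ to exactly the scaling orbit, so that both conditions of \Cref{def-local-morse-bott} hold. A secondary technical point is the uniform eigenvalue-gap/continuity argument and the $\matr{\Utwo}_k=\matr{I}_2$ step in condition (ii) of \Cref{theorem-SU15}, which both rely on using the stationarity of $\overline{\matr{U}}$ through \Cref{prop:dir_deriv} and \Cref{eq-Dij-euivalence}.
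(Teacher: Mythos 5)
Your proof is correct and follows essentially the same route as the paper's: establish the uniform eigenvalue-gap condition for $\Gamij{i}{j}{\matr{U}}$ near $\overline{\matr{U}}$ from the negative-definiteness hypothesis via \Cref{eq-Dij-euivalence} and continuity, feed \Cref{lem:sufficient_decrease_Gamma}, \Cref{lem:norm_equiv}, \Cref{lem:safeguard_Gamma} and \eqref{inequality-gra-based} into \Cref{theorem-SU15} (in the localized form of \Cref{rem:sufficient_descent_relax}) for part (i), and invoke Morse--Bott plus \Cref{cor:PLMorseBottManifold} to force $\zeta=\tfrac12$ for part (ii). Two places where you are actually more careful than the paper's own writeup: first, you explicitly verify condition (ii) of \Cref{theorem-SU15} (that a stationary iterate is a fixed point) by the block-diagonal structure of $\Gamij{i}{j}{\matr{U}}$ when $\matr{\Lambda}=0$ together with the persistent eigenvalue gap, whereas the paper leaves this implicit; second, in part (ii) the paper only asserts that $\overline{\matr{U}}$ lies on the $n$-dimensional orbit $\overline{\matr{U}}\cdot\mathbb{T}^n$ of stationary points and then invokes \Cref{remMorseBott}(ii), tacitly assuming the local critical set is exactly that orbit. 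You close this hole by passing to the quotient ${\tUN{n}}$, observing that $\widetilde{\overline{\matr{U}}}$ is non-degenerate hence isolated in ${\rm Crit}\,\widetilde f$, and pulling back through the submersion $\pi$ (using that $\ProjGrad{f}{\matr{U}}$ is horizontal so ${\rm Crit}\,f=\pi^{-1}({\rm Crit}\,\widetilde f)$). This is the right way to establish Morse--Bott condition (i) and makes the argument airtight; the paper's appeal to \Cref{remMorseBott} only handles condition (ii). So the proposal is sound, and if anything, a slightly tightened version of the paper's proof rather than a different one.
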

\begin{proof}
\begin{enumerate}[(i)]
\item
Since  $\HessPS{i}{j}{\overline{\matr{U}}}$  is negative definite for any $i \neq j$,  the two top eigenvalues of $\Gamij{i}{j}{\overline{\matr{U}}}$ are separated by \Cref{eq-Dij-euivalence}.
Therefore,  there exists $\varepsilon > 0$ such that
\begin{equation*}
\frac{\lambda_2(\Gamij{i}{j}{\overline{\matr{U}}}) - \lambda_3(\Gamij{i}{j}{\overline{\matr{U}}})}{\lambda_1(\Gamij{i}{j}{\overline{\matr{U}}}) - \lambda_3(\Gamij{i}{j}{\overline{\matr{U}}})} < 1- \varepsilon.
\end{equation*}
By the  continuity of $\Gamij{i}{j}{\matr{U}}$ with respect to $\matr{U}$, the conditions of \Cref{lem:sufficient_decrease_Gamma} are satisfied in a neighborhood of $\overline{\matr{U}}$. Therefore, there exists $c>0$ such that 
\begin{align*}
|f(\matr{U}_{k}) - f(\matr{U}_{k-1})| &\ge c  \|\ProjGrad{h_k}{\uI_2}\| \|\matr{U}_{k}- \matr{U}_{k-1}\|\\
&\ge c\delta  \|\ProjGrad{f}{\matr{U}_{k-1}}\| \|\matr{U}_{k}- \matr{U}_{k-1}\|,
\end{align*} 
in a neighborhood of $\overline{\matr{U}}$  by \Cref{lem:sufficient_decrease_Gamma}, \Cref{lem:norm_equiv} and \eqref{inequality-gra-based}.  
By \Cref{rem:sufficient_descent_relax}, it is enough to use \Cref{theorem-SU15}, hence $\overline{\matr{U}}$ is the only limit point.
Moreover, by \Cref{lem:safeguard_Gamma}  and \eqref{inequality-gra-based},  the convergence rates apply. 

\item
Due to the scaling invariance,  $\overline{\matr{U}}$ belongs to an $n$-dimensional submanifold of stationary  points defined by $\overline{\matr{U}}\matr{S}$, where $\matr{S}$ is as in \eqref{eq:invariance_scaling}.
Since $\rank{\Hess{f}{\overline{\matr{U}}}} =  n(n-1)$, $f$ is Morse-Bott  at $\overline{\matr{U}}$ by \Cref{remMorseBott}.
Therefore, by \Cref{cor:PLMorseBottManifold},  $\zeta=1/2$ in \eqref{eq:Lojasiewicz}  at $\overline{\matr{U}}$, and thus  the convergence is linear by \Cref{theorem-SU15}.
\end{enumerate}
\end{proof}

\begin{theorem}\label{thm:local_linear}
Let  $f$ be as in \Cref{thm:accumulation_points}, and  ${\matr{U}}_*$ be a semi-strict local maximum  point  of $f$ (i.e.,  $\rank{\Hess{f}{{\matr{U}}_*}} =  n(n-1)$).
Then there exists a neighborhood $\set{W}$ of ${\matr{U}}_*$, such that for any  starting point  $\matr{U}_0 \in \set{W}$, \Cref{Jacobi-G-general} converges linearly to ${\matr{U}}_* \matr{S}$, where $\matr{S}$ is of the form \eqref{eq:invariance_scaling}.
\end{theorem}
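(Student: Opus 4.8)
The plan is to bootstrap the conditional statement \Cref{thm:accumulation_points} into a genuinely local one: I will exhibit a neighborhood of $\matr{U}_*$ on which all hypotheses of \Cref{thm:accumulation_points} hold, and a (possibly smaller) neighborhood $\set{W}$ such that every Jacobi-G trajectory started in $\set{W}$ never leaves the first neighborhood; its unique accumulation point then lies on the orbit $\matr{U}_*\mathbb{T}^{n}$ and \Cref{thm:accumulation_points} finishes the argument. Two facts are prepared near $\matr{U}_*$. \emph{Morse--Bott structure:} by the invariance \eqref{eq:invariance_scaling}, $\matr{U}_*\mathbb{T}^{n}$ is an $n$-dimensional submanifold of stationary points, and by \Cref{lem:f_invariant_Hessian} its tangent space at $\matr{U}_*$ lies in $\mathrm{Ker}\,\Hess{f}{\matr{U}_*}$; since $\rank{\Hess{f}{\matr{U}_*}}=n(n-1)$ it equals the kernel, so on the quotient $\widetilde{\matr{U}}_*$ is a critical point of $\widetilde f$ whose Hessian is nonsingular --- indeed negative definite, since $\matr{U}_*$ is a local maximum --- hence $\widetilde{\matr{U}}_*$ is isolated in ${\rm Crit}\,\widetilde f$ and ${\rm Crit}\,f$ coincides with the orbit in a neighborhood of $\matr{U}_*$. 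Thus $f$ is Morse--Bott at $\matr{U}_*$ (\Cref{def-local-morse-bott}, \Cref{remMorseBott}) and \Cref{cor:PLMorseBottManifold} yields an open set $\set{V}_1\ni\matr{U}_*$ and $\sigma>0$ with $f(\matr{U}_*)-f(\matr{U})\le\sigma\|\ProjGrad{f}{\matr{U}}\|^{2}$ on $\set{V}_1$; I shrink $\set{V}_1$ so that also $f\le f(\matr{U}_*)$ there, and observe that the same inequality holds at every point of the orbit with the same value $f(\matr{U}_*)$.

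\emph{Uniform eigenvalue gap:} the subgroup embedding $\matr{\Utwo}\mapsto\matr{U}_*\Gmat{i}{j}{\matr{\Utwo}}$ is totally geodesic (this follows from $\Gmat{i}{j}{\exp(t\matr{\Omega})}=\exp(t\,\mathcal{P}_{i,j}^{\T}(\matr{\Omega}))$) and identifies $\Tang{\widetilde{\matr{I}_2}}{\tUN{2}}$ with the $(i,j)$-block of the orthogonal decomposition of $\Tang{\widetilde{\matr{U}}_*}{\tUN{n}}$; consequently $\HessPS{i}{j}{\matr{U}_*}$ is a principal $2\times2$ block of $\Hess{\widetilde f}{\widetilde{\matr{U}}_*}$. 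Since $\matr{U}_*$ is semi-strict, $\Hess{\widetilde f}{\widetilde{\matr{U}}_*}\prec0$, hence $\HessPS{i}{j}{\matr{U}_*}\prec0$ for every pair $(i,j)$. As $\matr{U}_*$ is stationary, $\matr{I}_2$ is a stationary point of each $\hij{i}{j}{\matr{U}_*}$, so by \Cref{eq-Dij-euivalence} the two largest eigenvalues of $\Gamij{i}{j}{\matr{U}_*}$ are strictly separated. By continuity of $\matr{U}\mapsto\Gamij{i}{j}{\matr{U}}$ (\Cref{thm:elementary_updates}) and finiteness of the index set, there are a neighborhood $\set{V}_2\subseteq\set{V}_1$ of $\matr{U}_*$ and $\varepsilon>0$ with $1-\eta\ge\varepsilon$ for the ratio $\eta$ of \Cref{lem:sufficient_decrease_Gamma} for each $\Gamij{i}{j}{\matr{U}}$, $\matr{U}\in\set{V}_2$. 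Combining \Cref{lem:sufficient_decrease_Gamma}, \Cref{lem:norm_equiv} and the selection rule \eqref{inequality-gra-based} gives a constant $c_0>0$ with $f(\matr{U}_k)-f(\matr{U}_{k-1})\ge c_0\|\ProjGrad{f}{\matr{U}_{k-1}}\|\,\|\matr{U}_k-\matr{U}_{k-1}\|$ whenever $\matr{U}_{k-1}\in\set{V}_2$, i.e.\ the sufficient-decrease condition \eqref{eq:sufficient_descent} holds on $\set{V}_2$; the safeguard \eqref{eq:safeguard} on $\set{V}_2$ follows from \Cref{lem:safeguard_Gamma} with \eqref{inequality-gra-based}.

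\emph{Capture and conclusion:} fix $\rho>0$ with $\overline{B}(\matr{U}_*,\rho)\subseteq\set{V}_2$, and set $a_k=f(\matr{U}_*)-f(\matr{U}_k)\ge0$. As long as $\matr{U}_0,\dots,\matr{U}_k\in\overline{B}(\matr{U}_*,\rho)$, combining the sufficient-decrease bound, the \L{}ojasiewicz bound $\|\ProjGrad{f}{\matr{U}_{m-1}}\|\ge\sigma^{-1/2}a_{m-1}^{1/2}$, and $a_{m-1}-a_m\le 2a_{m-1}^{1/2}\bigl(a_{m-1}^{1/2}-a_m^{1/2}\bigr)$ gives the telescoping length bound $\sum_{m=1}^{k}\|\matr{U}_m-\matr{U}_{m-1}\|\le \tfrac{2\sqrt{\sigma}}{c_0}\bigl(f(\matr{U}_*)-f(\matr{U}_0)\bigr)^{1/2}$. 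Choosing $\set{W}\ni\matr{U}_*$ so small that $\set{W}\subseteq\overline{B}(\matr{U}_*,\rho/2)$ and $\tfrac{2\sqrt{\sigma}}{c_0}(f(\matr{U}_*)-f(\matr{U}))^{1/2}<\rho/2$ on $\set{W}$ (possible since $f$ is continuous and $f(\matr{U}_*)$ is the local maximum value), an induction on $k$ shows that a trajectory with $\matr{U}_0\in\set{W}$ stays in $\overline{B}(\matr{U}_*,\rho)\subseteq\set{V}_2$, hence has finite length and converges to some $\overline{\matr{U}}\in\overline{B}(\matr{U}_*,\rho)$. By \Cref{theorem-local-gene}, $\overline{\matr{U}}$ is stationary, so it lies on the orbit, $\overline{\matr{U}}=\matr{U}_*\matr{S}$ with $\matr{S}$ as in \eqref{eq:invariance_scaling}; moreover $\HessPS{i}{j}{\overline{\matr{U}}}\prec0$ for all $i<j$ (because $\overline{\matr{U}}\in\set{V}_2$) and $\rank{\Hess{f}{\overline{\matr{U}}}}=n(n-1)$ (the rank is constant along the orbit). \Cref{thm:accumulation_points} then gives that $\overline{\matr{U}}$ is the only limit point and that the convergence is linear, which is the assertion.

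The step I expect to be the main obstacle is the capture: since \eqref{eq:sufficient_descent} is only known inside $\set{V}_2$, one cannot a priori guarantee that the iterates remain there. The resolution is the classical \L{}ojasiewicz bootstrap that turns the telescoping length estimate into an invariant ball once the initial function gap is controlled, and it hinges on having a \emph{uniform} decrease constant $c_0$ over the whole neighborhood, which is exactly why the eigenvalue-gap argument of the second step is needed. A smaller but genuine point is justifying that $\HessPS{i}{j}{\matr{U}_*}$ is a principal block of the quotient Hessian, so that semi-strictness of $\matr{U}_*$ propagates to all pairs; this is the totally geodesic property of the subgroup embeddings, which is immediate from the matrix-exponential identity above.
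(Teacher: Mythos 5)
Your proposal takes a genuinely different route from the paper for the crucial ``capture'' step. The paper works on the quotient manifold, invokes the Morse-Lemma neighborhood $\widetilde{\set{W}}$ of \Cref{lem:morse_maximum} (boundary is a level set, superlevel sets are nested and simply connected), computes that along each elementary geodesic $f(\gamma(t))=A\cos(2(t-t_*))+C_1$, and then uses Rolle's theorem together with the second-order information of \Cref{rem:negative_deriv_geodesic} to show $\widetilde{\matr{U}}_k\in\widetilde{\set{W}}_{f(\matr{U}_{k-1})}$, so the iterates never leave $\widetilde{\set{W}}$. You instead run the classical \L{}ojasiewicz trajectory-length estimate directly on $\UN{n}$, using the Morse--Bott Polyak--\L{}ojasiewicz inequality from \Cref{cor:PLMorseBottManifold}. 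The preparatory steps of your proof (Morse--Bott structure on the orbit, $\HessPS{i}{j}{\matr{U}_*}\prec0$ from negative definiteness of $\Hess{\widetilde f}{\widetilde{\matr{U}}_*}$, uniform eigenvalue gap giving a uniform decrease constant $c_0$ via \Cref{lem:sufficient_decrease_Gamma}) are sound, and incidentally you spell out the check $\HessPS{i}{j}{\overline{\matr{U}}}\prec0$ needed to invoke \Cref{thm:accumulation_points}, which the paper leaves implicit; note also that the totally-geodesic observation is more than is needed: at a stationary point the Hessian of any restriction is automatically the restriction of the Hessian.

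There is, however, a genuine gap in the capture step, and it is not the one you flagged. In the telescoping estimate
$\|\matr{U}_m-\matr{U}_{m-1}\|\le\frac{2\sqrt{\sigma}}{c_0}\bigl(a_{m-1}^{1/2}-a_m^{1/2}\bigr)$
with $a_m=f(\matr{U}_*)-f(\matr{U}_m)$, the concavity-of-$\sqrt{\cdot}$ bound at the inductive step $m=k$ requires $a_k\ge0$, i.e.\ $f(\matr{U}_k)\le f(\matr{U}_*)$. But under the inductive hypothesis you only know $\matr{U}_0,\dots,\matr{U}_{k-1}\in\overline{B}(\matr{U}_*,\rho)\subseteq\set{V}_1$; you do not yet know $\matr{U}_k\in\set{V}_1$, and without that, $a_k$ could be negative, the factorization $a_{k-1}-a_k\le 2a_{k-1}^{1/2}(a_{k-1}^{1/2}-a_k^{1/2})$ breaks down, and the bound on $\|\matr{U}_k-\matr{U}_{k-1}\|$ is lost. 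This overshoot is not hypothetical: a single Jacobi step can have $\|\matr{\Utwo}_k-\matr{I}_2\|$ as large as $2\sqrt{1-1/\sqrt2}$, i.e.\ $O(1)$, since the subproblem is maximized over a 2-sphere, not a line segment. The paper's nested-superlevel-set argument rules out the overshoot directly. To salvage your route you must first prove a step-size bound $\|\matr{U}_k-\matr{U}_{k-1}\|\le C\|\ProjGrad{f}{\matr{U}_{k-1}}\|$ on $\set{V}_2$, which does hold here: since $\HessPS{i}{j}{\matr{U}}\preceq -\beta\matr{I}_2$ uniformly on $\set{V}_2$, the $(2,3)\times(2,3)$ block of $\Gamij{i}{j}{\matr{U}}$ is at spectral distance $\ge\beta/2$ below $\Gamma_{11}$, so a Davis--Kahan argument gives $\sqrt{1-w_1^2}\le C'\sqrt{\Gamma_{12}^2+\Gamma_{13}^2}= C''\|\ProjGrad{h_k}{\matr{I}_2}\|$, and \Cref{lem:norm_equiv} then bounds the step by the gradient. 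With this in hand one can enlarge $\set{V}_1$ to contain a $\rho$-collar of $\overline{B}(\matr{U}_*,\rho)$ and the induction closes; without it, the argument as written is incomplete. This is the one real obstacle --- your concern about the uniformity of $c_0$ is indeed important but is already handled correctly by the eigenvalue-gap continuity step.
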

\begin{proof}
Let ${\tUN{n}}$ be the quotient manifold defined in \cref{sec:quotient}. 
By \Cref{lem:f_invariant_Hessian}  we  have that $\rank{\Hess{\widetilde{f}}{\widetilde{\matr{U}_*}}} = \rank{\Hess{{f}}{{\matr{U}_*}}} = n(n-1)$, and therefore it is negative definite.
 Let us take the open neighborhood $\widetilde{\set{W}}$  of ${\widetilde{\matr{U}}_*}$   as in  \Cref{lem:morse_maximum}. 
For simplicity assume that $f(\matr{U}_*) = 0$.

Next, assume that $\widetilde{\matr{U}}_{k-1} \in \widetilde{\set{W}}$, and 
 consider the $\matr{U}_k = \matr{U}_{k-1} \Gmat{i_k}{j_k}{\matr{\Utwo}_k}$ with  $\matr{\Utwo}_k$  given as the maximizer of \eqref{eq-cost-quadratic-form}.
 Let  $b = f({\matr{U}}_{k-1})$. In what follows, we are going to prove that  $\widetilde{\matr{U}}_{k} \in\widetilde{\set{W}}_b$  (defined as in \Cref{lem:morse_maximum}), 
so that the sequence $\widetilde{\matr{U}}_k$ never leaves the set $\widetilde{\set{W}}$.

Recall that  $\matr{\Utwo}_k$  is computed as follows (see \cref{remark-eigenvectors}): take the vector $\vect{w}$  as in \eqref
{eq:w_definition}.
Take $\alpha_1 = -w_2 / \sqrt{1-w_1^2}$, $\alpha_2 = -w_3 / \sqrt{1-w_1^2}$ (we can assume $w_1 \neq 1$ because otherwise  $\matr{\Utwo}_k = \matr{I}_2$  and this case is trivial), and consider the following geodesic in $\UN{n}$:
\begin{equation*}
\gamma(t) =  \text{Exp}_{\matr{U}_{k-1}}\left(\matr{U}_{k-1} \mathcal{P}_{i,j}^{\T} (\matr{\Omega}t)\right) ={\matr{U}_{k-1}}  \Gmat{i_k}{j_k}{\exp(\matr{\Omega t})}, 
\end{equation*} 
where $\matr{\Omega} \in \Tang{{\matr{I}_2}} {{\tUN{2}}}$ is defined as in \eqref{eq:omega_geodesic}. 
The geodesic  starts at $\gamma(0) =  {\matr{U}_{k-1}}$, and  reaches  $\gamma(t_*) = \matr{U}_{k}$   at  $t_*  = \arccos(w_1) \in  (0, \frac{\pi}{2}]$ .
Note that  by \Cref{exp-map-quotient},  the corresponding curve $\widetilde{\gamma}$ is  a geodesic in the quotient manifold ${\tUN{n}}$.

Next, from \eqref{eq-cost-quadratic-form}   (applied to $\matr{\Gamma} =\Gamij{i_k}{j_k}{\matr{U}_{k-1}}$) we have that
\begin{equation*}
f(\gamma(t) )  = h(e^{\matr{\Omega t}})=  \begin{bmatrix} \cos t & -\alpha_1\sin t  & -\alpha_2\sin t \end{bmatrix} \matr{\Gamma} \begin{bmatrix} \cos t & -\alpha_1\sin t  & -\alpha_2\sin t \end{bmatrix}^{\T} + C,
\end{equation*}
hence  $f(\gamma(t) )$ can be represented (for some constants $A$, $C_1$)  as 
\begin{equation*}
f(\gamma(t)) = A\cos(2(t-t_*)) + C_1; 
\end{equation*}
 note that $A>0$ since $t_*>0$ is the maximizer. 

 Next,   by \Cref{rem:negative_deriv_geodesic}, we should have   $\frac{d}{dt^2}f(\gamma(0)) = -4A\cos(-2t_*) < 0$,  which implies   $\cos(2t_*) > 0$. Thus,  we can further reduce the domain where $t_*$ is located to   $t_* \in (0,\frac{\pi}{4}]$. 
Hence we have that $\frac{d}{dt}f(\gamma(t)) = -4A\sin(2(t-t_*)) > 0$ for  any $t\in [0,t_*)$,  and thus  the cost function is  increasing;  note that $\frac{d}{dt}f(\gamma(t_*)) = 0$ and there are no other stationary points in $t\in [0,t_*)$.

Next, by continuity and because $\widetilde{\set{W}}$ is open,  there exists  a small $\varepsilon > 0$ such that  $\widetilde{\gamma}(\varepsilon)$ is in the interior of $\widetilde{\set{W}}_{b}$.
By periodicity of $f(\gamma(t))$ and continuity, we have that there exists $t_2$ such that $\widetilde{\gamma}(t_2) \in \delta(\widetilde{\set{W}}_b)$ and $\widetilde{\gamma}(t) \in \widetilde{\set{W}}_{b}$ for all $t\in[0,t_2]$.
By Rolle's theorem, there exists a local maximum of $f(\gamma(t))$ in $[0,t_2]$.
Note that by construction, the closest positive local maximum to $0$ is at $t_*$. Therefore  $\widetilde{\matr{U}}_k =  \widetilde{\gamma}(t_*) \in \widetilde{\set{W}}_{b}$, hence we stay in the same neighborhood $\widetilde{\set{W}}$.

Finally, as a neighborhood of $\matr{U}_*\in \UN{n}$, we can take the preimage  $\set{W} = \pi^{-1}(\widetilde{\set{W}})$; also 
 linear convergence rate follows from \Cref{thm:accumulation_points}.
 The proof is complete. 
 
\end{proof}

\subsection{Examples of cost functions satisfying regularity conditions}\label{sec:examples}
In this subsection,  we provide examples when   the regularity conditions  of \Cref{thm:accumulation_points,thm:local_linear}  are satisfied for diagonalizable tensors and matrices  at the diagonalizing rotation. 
Recall that $\tens{A} \in \CC^{n \times \cdots \times n}$ is a diagonal tensor if all the elements are zero except  $\diag{\tens{A}}$.

\begin{proposition}\label{prop:diag_neg_def}
\begin{enumerate}[(i)]
\item For a set of jointly orthogonally  diagonalizable  matrices
\begin{equation*}
\matr{A}^{{(\ell)}} = \matr{U}_{*} \left[\begin{smallmatrix}\mu^{(\ell)}_{1} & &  0 \\  & \ddots &  \\ 0 & & \mu^{(\ell)}_{n}\end{smallmatrix}\right] \matr{U}^{\H}_{*},
\end{equation*}
such that for any pair $i \neq j$,
\begin{equation*}
\sum\limits_{\ell=1}^{L} (\mu^{(\ell)}_{i} -\mu^{(\ell)}_{j} )^2 > 0,
\end{equation*}
the matrix $\matr{U}_{*}$ is a semi-strict (as in \Cref{thm:local_linear}) local maximum  point  of the cost function \eqref{eq-cost-jade}.  
\item For an orthogonally diagonalizable $3$rd order tensor
\begin{equation*}
\tens{A} = \tens{D} \contr{1} \matr{U}_{*} \contr{2} \matr{U}^{*}_{*}\contr{3} \matr{U}^{*}_{*},
\end{equation*}
where $\tens{D}$ is a diagonal tensor with at most one zero element on the diagonal,
the matrix $\matr{U}_{*}$ is a semi-strict local maximum  point  of the cost function \eqref{eq-cost-3-tensor}. 
\item For an orthogonally diagonalizable  4th order  tensor
\begin{equation*}
\tens{A} = \tens{D} \contr{1} \matr{U}_{*} \contr{2} \matr{U}_{*} \contr{3} \matr{U}^{*}_{*}\contr{4} \matr{U}^{*}_{*},
\end{equation*}
where  $\tens{D}$ is a diagonal tensor,  the values on the diagonal are either (a) all positive or (b) there is at most one $i$ with $\tenselem{D}_{iiii} \le 0$, for which $\tenselem{D}_{iiii} + \tenselem{D}_{jjjj} > 0$ for all $j \neq i$,  
the matrix $\matr{U}_*$ is a semi-strict local maximum point  of
the function  \eqref{cost-4-order}. 
\end{enumerate}
\end{proposition}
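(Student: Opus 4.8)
The plan is to reduce all three parts to one second-order computation at $\matr{I}_n$. Each cost function is invariant under $\matr{U}\mapsto\matr{U}_*^{\H}\matr{U}$ once the fixed unitary $\matr{U}_*$ is absorbed into the data: in (i) one replaces $\matr{A}^{(\ell)}$ by $\matr{U}_*^{\H}\matr{A}^{(\ell)}\matr{U}_*$, in (ii)--(iii) one pulls the factor $\matr{U}_*$ out of $\tens{A}$, obtaining $f_{\tens{A}}(\matr{U}) = f_{\tens{D}}(\matr{U}_*^{\H}\matr{U})$ with the data now diagonal (and $\tens{D}$ real in (iii), as forced by the Hermitian symmetry \eqref{eq:hermitian} and implicit in the hypotheses there). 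Since $\matr{U}\mapsto\matr{U}_*^{\H}\matr{U}$ is an isometry of $\UN{n}$ sending $\matr{U}_*$ to $\matr{I}_n$, and the transformed cost is again of the form \eqref{eq-cost-jade}, \eqref{eq-cost-3-tensor}, or \eqref{cost-4-order}, it suffices to treat $\matr{U}_* = \matr{I}_n$ with diagonal data. Then I would check that $\matr{I}_n$ is stationary and that $\rank{\Hess{f}{\matr{I}_n}} = n(n-1)$; by \Cref{lem:f_invariant_Hessian} and self-adjointness of the Hessian, $\Hess{f}{\matr{I}_n}$ vanishes on the $n$-dimensional vertical space, so the rank condition is equivalent to the restriction of $\Hess{f}{\matr{I}_n}$ to the horizontal space $\TangH{\matr{I}_n}{\UN{n}} = \{\matr{Z}:\matr{Z}^{\H}=-\matr{Z},\ \diag{\matr{Z}} = \vect{0}\}$ (real dimension $n(n-1)$) being nonsingular; by \Cref{exp-map-quotient} this restriction is the Riemannian Hessian of $\widetilde f$ on ${\tUN{n}}$, and I will actually show it is negative definite there.

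The key step is to Taylor-expand $f(\exp\matr{Z})$ for $\matr{Z}\in\TangH{\matr{I}_n}{\UN{n}}$. From $\matr{Z}^{\H}=-\matr{Z}$ one gets $(\exp\matr{Z})_{pp} = 1 - \tfrac12\sum_k|Z_{pk}|^2 + O(\|\matr{Z}\|^3)$ (real to this order) and $(\exp\matr{Z})_{ip} = Z_{ip} + O(\|\matr{Z}\|^2)$ for $i\neq p$. Substituting into the diagonal-type contractions defining the cost functions — in (i) conveniently via $e^{-\matr{Z}}\matr{A}^{(\ell)}e^{\matr{Z}} = \matr{A}^{(\ell)} + [\matr{A}^{(\ell)},\matr{Z}] + \tfrac12[[\matr{A}^{(\ell)},\matr{Z}],\matr{Z}] + \cdots$, whose $p$-th diagonal entry is $\mu^{(\ell)}_p - \sum_k(\mu^{(\ell)}_p - \mu^{(\ell)}_k)|Z_{pk}|^2 + O(\|\matr{Z}\|^3)$; in (ii)--(iii) noting that only the all-$p$ index pattern contributes up to order $2$ — shows the linear term vanishes (so $\matr{I}_n$ is stationary; this also follows from \Cref{prop:gradient-general-fn}), and that $f(\exp\matr{Z}) - f(\matr{I}_n)$ equals, modulo $O(\|\matr{Z}\|^3)$, a quadratic form diagonal in the coordinates $\{Z_{pk}\}_{p<k}$ which, up to a positive scalar, is the Riemannian Hessian quadratic form of $\widetilde f$ at $\widetilde{\matr{I}_n}$ on the horizontal space:
\begin{itemize}
\item[(i)] $-2\sum_{p<k}\big(\sum_{\ell=1}^{L}(\mu^{(\ell)}_p-\mu^{(\ell)}_k)^2\big)\,|Z_{pk}|^2$,
\item[(ii)] $-3\sum_{p<k}\big(|\tenselem{D}_{ppp}|^2 + |\tenselem{D}_{kkk}|^2\big)\,|Z_{pk}|^2$,
\item[(iii)] $-2\sum_{p<k}\big(\tenselem{D}_{pppp}+\tenselem{D}_{kkkk}\big)\,|Z_{pk}|^2.$
\end{itemize}

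Finally, each of these forms is negative definite on $\TangH{\matr{I}_n}{\UN{n}}$ precisely under the stated hypotheses: in (i) every coefficient $\sum_\ell(\mu^{(\ell)}_p-\mu^{(\ell)}_k)^2$ is positive by assumption; in (ii) at most one $\tenselem{D}_{iii}$ vanishes, so $|\tenselem{D}_{ppp}|^2 + |\tenselem{D}_{kkk}|^2 > 0$ for every pair $p<k$; in (iii) either all $\tenselem{D}_{iiii} > 0$, or the single possibly non-positive diagonal entry $\tenselem{D}_{iiii}$ satisfies $\tenselem{D}_{iiii} + \tenselem{D}_{jjjj} > 0$ for all $j\neq i$, so $\tenselem{D}_{pppp}+\tenselem{D}_{kkkk}>0$ for every pair. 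Hence $\Hess{f}{\matr{I}_n}$ is negative semidefinite with rank $n(n-1)$, $\matr{I}_n$ is a strict local maximum of $\widetilde f$ and therefore a semi-strict local maximum of $f$, and undoing the reduction gives the claim at $\matr{U}_*$. I expect the main obstacle to be the second-order bookkeeping: verifying cleanly in (ii)--(iii) that every index pattern other than the all-$p$ one contributes only at order $\ge 3$, and that the $O(\|\matr{Z}\|^2)$ correction to $(\exp\matr{Z})_{pp}$ is real so that it enters the modulus with a definite sign; the rest is routine.
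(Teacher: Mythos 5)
Your proposal is correct, and it reaches the same coefficients as the paper but by a genuinely different route. After the reduction to $\matr{U}_*=\matr{I}_n$ with diagonal data (which the paper also does), you compute the second-order Taylor coefficient of $t\mapsto f(\exp(t\matr{Z}))$ directly for a general horizontal $\matr{Z}$ — using $\exp(-\matr{Z})\matr{A}\exp(\matr{Z})=\matr{A}+[\matr{A},\matr{Z}]+\tfrac12[[\matr{A},\matr{Z}],\matr{Z}]+\cdots$ in (i) and the expansion $(\exp\matr{Z})_{pp}=1-\tfrac12\sum_k|Z_{pk}|^2+O(\|\matr{Z}\|^3)$ in (ii)--(iii) — and observe that the resulting quadratic form is manifestly diagonal in the coordinates $\{\Re Z_{pk},\Im Z_{pk}\}_{p<k}$. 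The paper instead proves the block-diagonal structure of $\Hess{f}{\matr{I}_n}$ on the horizontal space \emph{a priori}: it invokes the Weingarten decomposition \eqref{eq:Hessian_reduction}, shows via \Cref{lem:Hessian_off} that the Euclidean Hessian has no off-diagonal $(i,k)$--$(j,l)$ blocks at a diagonal data point, checks the same for the Weingarten term, and then reads each $2\times2$ block off of \Cref{prop:dir_deriv_Hessian} and the explicit $\Gamij{i}{j}{\matr{U}}$ from \Cref{lem-cost-quadratic-form}. Your Taylor coefficients are consistent with the paper's $\HessPS{i}{j}{\matr{I}_n}$: accounting for the $\frac{d^2}{dt^2}$ factor of $2$ and the parametrization $\matr{\Omega}=-2\Re(Z_{ij})\matr{\Delta}_1-2\Im(Z_{ij})\matr{\Delta}_2$ (so $\alpha_1^2+\alpha_2^2=4|Z_{ij}|^2$), your $-2\sum_\ell(\mu_p-\mu_k)^2$, $-3(|\tenselem{D}_{ppp}|^2+|\tenselem{D}_{kkk}|^2)$, $-2(\tenselem{D}_{pppp}+\tenselem{D}_{kkkk})$ match the paper's $-\matr{I}_2\sum_\ell(\mu_i-\mu_j)^2$, $-\tfrac32\matr{I}_2(|\tenselem{D}_{iii}|^2+|\tenselem{D}_{jjj}|^2)$, $-\matr{I}_2(\tenselem{D}_{iiii}+\tenselem{D}_{jjjj})$ exactly. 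What the paper's approach buys is reuse of the $\HessPS{i}{j}$ machinery already built for the convergence analysis (so no new computation is needed and the link to \Cref{thm:accumulation_points} is immediate); what yours buys is elementarity and independence from \Cref{lem-cost-quadratic-form}, \Cref{prop:dir_deriv_Hessian}, and the Weingarten formula — at the cost of the bookkeeping you flagged (checking that only the all-$p$ index patterns survive at order $\|\matr{Z}\|^2$, and that the $O(\|\matr{Z}\|^2)$ part of $(\exp\matr{Z})_{pp}$ is real). Both routes are sound.
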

For proving \Cref{prop:diag_neg_def}, we  need a lemma about Hessians of  multilinear forms.
\begin{lemma}\label{lem:Hessian_off}
Let $\gamma(\vect{u})$ be  a  Hermitian form  of order $2d$   $\gamma(\vect{u}) = g_{\tens{B},t}(\vect{u})$, where  $\tens{B}$ is  diagonal tensor.  
Then for any  distinct indices $1 \le i \neq j \neq k \le n$ it holds that 
\begin{equation*} 
\frac{\partial^2 {\gamma}}{\partial {u}^*_i \partial {u}^*_j}  (\vect{e}_{k} ) =  \frac{\partial^2 {\gamma}}{\partial {u}^*_i \partial {u}_j}  (\vect{e}_{k} ) = 0. 
\end{equation*}
\end{lemma}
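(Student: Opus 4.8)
The plan is to exploit the fact that a diagonal tensor forces $\gamma$ to be \emph{separable} over the coordinates of $\vect{u}$. First I would write $\gamma$ out explicitly. Since $\tens{B}$ is diagonal, $\tenselem{B}_{i_1\cdots i_{2d}}=0$ unless $i_1=\cdots=i_{2d}$, and a diagonal tensor is automatically fully symmetric, so no semi-symmetrization is needed; contracting $t$ copies of $\vect{u}^{*}$ and $2d-t$ copies of $\vect{u}$ against it collapses the sum onto the diagonal:
\[
\gamma(\vect{u}) = g_{\tens{B},t}(\vect{u}) = \sum_{p=1}^{n}\tenselem{B}_{p\cdots p}\,(u^{*}_p)^{t}\,(u_p)^{2d-t} = \sum_{p=1}^{n}\gamma_p(u_p,u^{*}_p),
\]
a sum of $n$ terms, the $p$-th of which depends only on the single coordinate $u_p$ and its conjugate.

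Next I would compute the relevant Wirtinger derivatives, treating $\vect{u}$ and $\vect{u}^{*}$ as independent. Applying $\partial/\partial u^{*}_i$ annihilates every term $\gamma_p$ with $p\neq i$, leaving
\[
\frac{\partial\gamma}{\partial u^{*}_i}(\vect{u}) = \frac{\partial\gamma_i}{\partial u^{*}_i}(u_i,u^{*}_i) = t\,\tenselem{B}_{i\cdots i}\,(u^{*}_i)^{t-1}(u_i)^{2d-t},
\]
which is again a function of $u_i,u^{*}_i$ alone (for $t=0$ it vanishes identically and the conclusion is trivial). Differentiating this once more with respect to $u^{*}_j$ or $u_j$ for any $j\neq i$ then yields $0$, since the expression contains neither $u^{*}_j$ nor $u_j$. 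Hence $\partial^{2}\gamma/\partial u^{*}_i\partial u^{*}_j$ and $\partial^{2}\gamma/\partial u^{*}_i\partial u_j$ vanish identically whenever $i\neq j$, and in particular at $\vect{u}=\vect{e}_k$, which is exactly the asserted identity; note that the hypothesis that $k$ is distinct from $i$ and $j$ is in fact not needed, only $i\neq j$.

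There is no real obstacle here: the single structural input is that a diagonal $\tens{B}$ makes $\gamma$ split as a sum over the $n$ coordinates, after which the vanishing of cross-coordinate mixed partials is immediate. The only point requiring a little care is the bookkeeping of the Wirtinger differentiation — verifying that the $p$-th summand genuinely involves no coordinate other than the $p$-th — and this can be read off directly from the displayed formula for $\gamma$.
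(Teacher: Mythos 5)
Your proof is correct, and it is a little more elementary than the paper's. You first use the diagonality of $\tens{B}$ to collapse $\gamma$ into a separable sum $\gamma(\vect{u}) = \sum_{p} \tenselem{B}_{p\cdots p}\,(u_p^*)^t (u_p)^{2d-t}$ of single-coordinate monomials, after which the vanishing of the cross-coordinate mixed Wirtinger second derivatives is immediate and in fact holds identically in $\vect{u}$, not merely at $\vect{e}_k$. The paper instead differentiates the multilinear contraction twice by iterating \Cref{lem-multilinear-form}: the second derivative, paired with $\vect{e}_i$ and $\vect{e}_j$ and evaluated at $\vect{e}_k$, is a sum of terms $(\tens{B}\contr{s}\vect{e}_i\contr{p}\vect{e}_j)_{k\cdots k}$, each an entry of $\tens{B}$ with at least two distinct indices (since $i\neq j$) and hence zero by diagonality. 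Both arguments hinge on exactly the same structural fact; your separable decomposition is the cleaner way to expose it, and it also makes transparent, as you observe, that the hypothesis $k\notin\{i,j\}$ is superfluous — only $i\neq j$ is used.
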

\begin{proof}
By continuing  differentiation as in \Cref{lem-multilinear-form}, we get
that
\begin{equation*}
\vect{e}_{i}^{\T}\left(\frac{\partial^2 {\gamma}}{\partial \vect{u}^* \partial \vect{u}^*} (\vect{e}_{k})\right)\vect{e}_{j} = \sum\limits_{\begin{smallmatrix}s \neq  p \\ 1 \le s, p\le d\end{smallmatrix}} 
(\tens{B}  \contr{s} \vect{e_{i}} \contr{p} \vect{e_{j}})_{k\ldots k} = 0, 
\end{equation*}
\begin{equation*}
\vect{e}_{i}^{\T}\left(\frac{\partial^2 {\gamma}}{\partial \vect{u}^{*} \partial \vect{u}} (\vect{e}_{k})\right)\vect{e}_{j} = \sum\limits_{s=1}^{d}  \sum\limits_{p=d+1}^{2d} 
( \tens{B}  \contr{s} \vect{e_{i}} \contr{p} \vect{e_{j}})_{k\ldots k} = 0,
\end{equation*}
which completes the proof.
\end{proof}

\begin{proof}[Proof of \Cref{prop:diag_neg_def}]
Without loss of generality, we can consider only the case $\matr{U}_*=\matr{I}_n$, so that all the matrices/tensors are diagonal.
Due to diagonality of matrices/tensors (the off-diagonal elements are zero) from \Cref{prop:gradient-general-fn} we have that $\matr{I}_n$ is a stationary point 
and the Euclidean gradient $\nabla^{(\R)} f(\matr{I}_n)$ is  a diagonal matrix that contains $2d\diag{\tens{B}}$ on its diagonal.  
 Moreover,  by \cite[Eq. (8)--(10)]{Absil2013}  the Riemannian Hessian is  a sum of the projection of the Euclidean Hessian on the tangent space
 and a second term given by the Weingarten operator 
\begin{equation}\label{eq:Hessian_reduction}
\HessAppl{f}{\matr{I}_n}{\eta} = \Pi_{\mathbf{T}_{\matr{I}_n}\UN{n}}  {\rm H}_{{f}}(\matr{I}_n)  [\eta ]  +
\mathfrak{A}_{\matr{I}_n}(\eta, \Pi_{(\mathbf{T}_{\matr{I}_n}\UN{n})_{\bot}} \nabla^{(\R)} f(\matr{I}_n)), 
\end{equation}
where  $ {\rm H}_{{f}}$  is the Euclidean  Hessian  of  $f$,
 and the Weingarten operator  for $\UN{n}$ (similarly to the case of orthogonal group \cite{Absil2013}) is given by
\begin{equation*}
\mathfrak{A}_{\matr{U}}(\matr{Z},\matr{V})  =
\matr{U} \frac{1}{2} \left(\matr{Z}^{\H} \matr{V} - \matr{V}^{\H} \matr{Z}\right).
\end{equation*}
First,   we show that the  Euclidean  Hessian does not contain off-diagonal blocks. From \eqref{eq:Hessian_reduction}, we just need to look at the Euclidean Hessian.
Take two pairs of indices $(i,k)$ and $(j,l)$ and look at the  second-order Wirtinger derivatives  
\begin{equation*}
\frac{\partial^2 f}{\partial {U}^{*}_{i,k}\partial {U}_{j,l}} \quad \mbox{and} \quad \frac{\partial^2 f}{\partial {U}^{*}_{i,k}\partial {U}^{*}_{j,l}}.
\end{equation*}
Since by \eqref{eq:gradient_contrast_like}, $(\deriv{f}{\matr{U}^{*}})_{i,k}$ is a function of $\vect{u}_k$ only,  these  terms can only be nonzero if $j = k$ or $l = k$.
Let us choose $l=k$  (and $i\neq j$).  In that case, by \Cref{lem:Hessian_off}, 
\begin{equation*}
\frac{\partial^2 f}{\partial {U}^{*}_{i,k}\partial {U}_{j,k}} (\matr{I}_n) =  
\frac{\partial^2\gamma}{\partial {u}^{*}_i\partial{u}_j}(\vect{e}_k)= 0, 
\quad \frac{\partial^2 f}{\partial {U}^{*}_{i,k}\partial {U}^*_{j,k}} (\matr{I}_n) =  
\frac{\partial^2\gamma}{\partial {u}^{*}_i\partial{u}^*_j}(\vect{e}_k)= 0. 
\end{equation*}
Similarly,  we can show that off-diagonal blocks in   the second Hessian term is also equal to zero. 
Indeed, take $\matr{Z} = \mathcal{P}^{\T}_{i,k} (\matr{\Psi}_1)$, were $\matr{\Psi}_1$ is a $2\times 2$ skew-Hermitian matrix. 
Recall that $\matr{V} =  \Pi_{(\mathbf{T}_{\matr{I}_n}\UN{n})_{\bot}} \nabla^{(\R)} f(\matr{I}_n) =\nabla^{(\R)} f(\matr{I}_n)$ is diagonal, hence  $\matr{A} = \frac{\matr{Z}^{\H}\matr{V} -  \matr{V}^{\H}\matr{Z}}{2} = \mathcal{P}^{\T}_{i,k} (\matr{\Psi}_2)$ for some $2\times 2$ skew-Hermitian matrix $\matr{\Psi}_2$.
In this case, if $(j,l) \neq (i,k)$, then $\RInner{\matr{A}}{\mathcal{P}^{\T}_{j,l} (\matr{\Psi}_3)}= 0$ for any $2\times 2$ skew-Hermitian  $\matr{\Psi}_3$.
Thus the Riemannian Hessian is block-diagonal with the terms given in \Cref{prop:dir_deriv_Hessian}.

Finally, we apply \Cref{lem-cost-quadratic-form} and get that
\begin{enumerate}[(i)]
\item $\HessPS{i}{j}{\matr{I}_n}  = - \matr{I}_2 \sum\limits_{\ell=1}^{L} (\mu^{(\ell)}_{i} -\mu^{(\ell)}_{j} )^2$  for the cost function \eqref{eq-cost-jade}; 
\item $\HessPS{i}{j}{\matr{I}_n}  = - \frac{3}{2}\matr{I}_2 (|\tenselem{D}_{iii}|^2 + |\tenselem{D}_{jjj}|^2)$  for the cost function \eqref{eq-cost-3-tensor}; 
\item$\HessPS{i}{j}{\matr{I}_n}  = - \matr{I}_2 (\tenselem{D}_{iiii} + \tenselem{D}_{jjjj})$  for the cost function \eqref{cost-4-order}. 
\end{enumerate}
 It is easy to check that, in all three cases, $\HessPS{i}{j}{\matr{I}_n}$ is  negative definite  for any $i\neq j$  if and only if the conditions of the proposition are satisfied.
 The proof is complete. 
\end{proof}

\section{Implementation details  and experiments}
In this section, we comment on implementation details for \Cref{Jacobi-G-general} and Jacobi-type methods in general.
Note that implementations of Jacobi-type methods \cite{cardoso1993blind,cardoso1996jacobi,de1997signal,comon2001source} for the cyclic order of pairs are widely available, but they are often tailored to source separation problems and use implicit calculations.
The codes reproducing experiments in this section are publicly available at  \url{https://github.com/kdu/jacobi-G-unitary-matlab} (implemented in MATLAB, version R2019b).
Note that some experiments for the orthogonal group are available in \cite{LUC2018}.
\subsection{Implementation and computational complexity}\label{sec:impl_complexity}
Consider the general  problem of maximizing \eqref{cost-fn-general} (for $d \le 3$).
Note that the Givens rotations (from   \Cref{thm:elementary_updates}), as well as the Riemannian gradient (from \Cref{prop:gradient-general-fn}), are expressed in terms of the rotated tensors.
This leads to the following practical modification of \Cref{Jacobi-G-general}: instead of updating $\matr{U}_k = \matr{U}_{k-1} \Gmat{i_k}{j_k}{\matr{\Utwo}_k}$, we can transform the tensors themselves.
We summarize this idea in \Cref{alg:tensor_rotated} for the case $d=2$ (simultaneous diagonalization of matrices), and the cost function \eqref{eq-cost-jade}.

\begin{algorithm}[ht!]
{\bf Input:} Matrices $\matr{A}^{(\ell)}$, $1 \le \ell \le L$, starting point $\matr{U}_{0}$.\\
{\bf Output:} Sequence of iterations $\matr{U}_{k}$, rotated matrices $\matr{W}^{(\ell)}_k$.
\begin{enumerate}
\item initialize $\matr{W}^{(\ell)}_0  =   \matr{U}^{\H} \matr{A}^{(\ell)} \matr{U}$, for all $\ell$.
\item {\bf For} $k=1,2,\ldots$ until a stopping criterion is satisfied do
\item\quad Choose an index pair  $(i_k,j_k)$
\item\quad Find $\matr{\Psi}_k = \matr{\Psi}_k(c,s_1,s_2)$ that minimizes
\[
\textit{h}_k(\theta) = \sum\limits_{\ell} \|\diag{(\Gmat{i_k}{j_k}{\matr{\Psi}_k})^{\H} \matr{W}^{(\ell)}_{k-1}  \Gmat{i_k}{j_k}{\matr{\Psi}_k}}\|^2
\]       
\item\quad  Update $\matr{W}^{(\ell)}_{k} =  {(\Gmat{i_k}{j_k}{\matr{\Psi}_k})^{\H} \matr{W}^{(\ell)}_{k-1}  \Gmat{i_k}{j_k}{\matr{\Psi}_k}}$
\item {\bf End For}
\end{enumerate}
\caption{Jacobi-type algorithm by rotating the tensors}\label{alg:tensor_rotated}
\end{algorithm}

Let us comment  on the complexities of the steps (in what follows, we only count numbers of complex multiplications).
Some basic comments first:
\begin{itemize}
\item 
We can assume that the complexity of step 4 is constant $O(1)$: indeed, by  \Cref{thm:elementary_updates}, an eigenvector of a $3\times3$ matrix needs to be found.
\item In step 5, only a ``cross'' inside each of the matrices  is updated (the elements with the one of the indices $i$ or $j$.
This gives a total complexity (for naive implementation) of  $8Ln$ multiplications per update.
\end{itemize}
Thus, if a cyclic strategy \eqref{equation-Jacobi-C} is adopted  (the whole gradient is not computed),
then the  cycle of $\frac{n(n-1)}{2}$ plane rotations (often called \emph{sweep}) has the complexity $O(Ln^3)$.

\Cref{Jacobi-G-general} requires more care, since we need to have access to the Riemannian gradient (or the matrix $\matr{\Lambda}(\matr{U})$).
According to \Cref{prop:gradient-general-fn}, $O(Ln^2)$ multiplications are needed to compute the   matrix $\matr{\Lambda}(\matr{U})$ in the Riemannian gradient.
On the other hand, a plane rotation affects also only a part of the Riemannian gradient (also a cross) hence updating the   matrix $\matr{\Lambda}(\matr{U})$ after each rotation has complexity $O(Ln)$.
Thus, the complexity of  one sweep is again $O(Ln^3)$.

Now let us compare with the computational complexity of a first-order method from  \cite{Absil08:Optimization} (e.g., gradient descent).
At each iteration, we need  at least to compute the Riemannian gradient $O(Ln^2)$, and then compute the retraction, which has  complexity $O(n^3)$ for  typical choices (QR or polar decomposition).
Note that at each step we also need to  rotate the matrices, which requires additional $O(Ln^3)$ multiplications.

\begin{remark}
For 3rd order tensors, the complexity of the Jacobi-based methods does not increase, because we again update the cross, which has $O(Ln)$ elements.
\end{remark}

\subsection{Numerical experiments}
In the experiments, we again consider, for simplicity, simultaneous matrix diagonalization \eqref{eq-cost-jade}.
The general setup is as follows: we generate $L$  matrices $\matr{A}^{(\ell)} \in \CC^{n\times n}$, and compare several versions of 
\Cref{Jacobi-G-general}, as well as first-order Riemannian optimization methods implemented in the {\tt manopt} package \cite{BoumM14:manopt} (using {\tt stiefelcomplexfactory} ).
We compare the following methods:
\begin{enumerate}
\item \emph{Jacobi-G-max}: at each step of \Cref{Jacobi-G-general}, we select the pair $(i,j)$ that maximizes the absolute value $\Lambda_{i,j}(\matr{U}_{k-1})$ (see \Cref{rem:choosing_pair}).
\item \emph{Jacobi $0.1$}: we select the pairs in a cyclic-by-row order \eqref{equation-Jacobi-C}, but perform the rotations only if \eqref{inequality-gra-based} is satisfied for $\delta = 0.1 \sqrt{2}/n$. 
\item \emph{Jacobi-cyclic}: we use the cyclic-by-row order \eqref{equation-Jacobi-C}, without \eqref{inequality-gra-based}.
\item \emph{SD}: steepest descent from  \cite{BoumM14:manopt}.
\item \emph{CG}: conjugate gradients from  \cite{BoumM14:manopt}.
\item \emph{BFGS}: Riemannian version of BFGS from  \cite{BoumM14:manopt}.
\end{enumerate}
In all comparisons, $\matr{U}_0 = \matr{I}_n$. We also plot $\sum\limits_\ell \|\matr{A}^{(\ell)}\|^2 - f(\matr{U})$  instead of $f(\matr{U})$.

We first consider a difficult example. $L=5$ matrices of size $10\times 10$ were generated randomly, such that the real and imaginary part are sampled from the uniform distribution on $[0;1]$.
We plot the results in \Cref{fig:randA_comparison}.
\begin{figure}[htb!]
\begin{minipage}[b]{0.49\linewidth}
 \centering
 {\includegraphics[height=4cm]{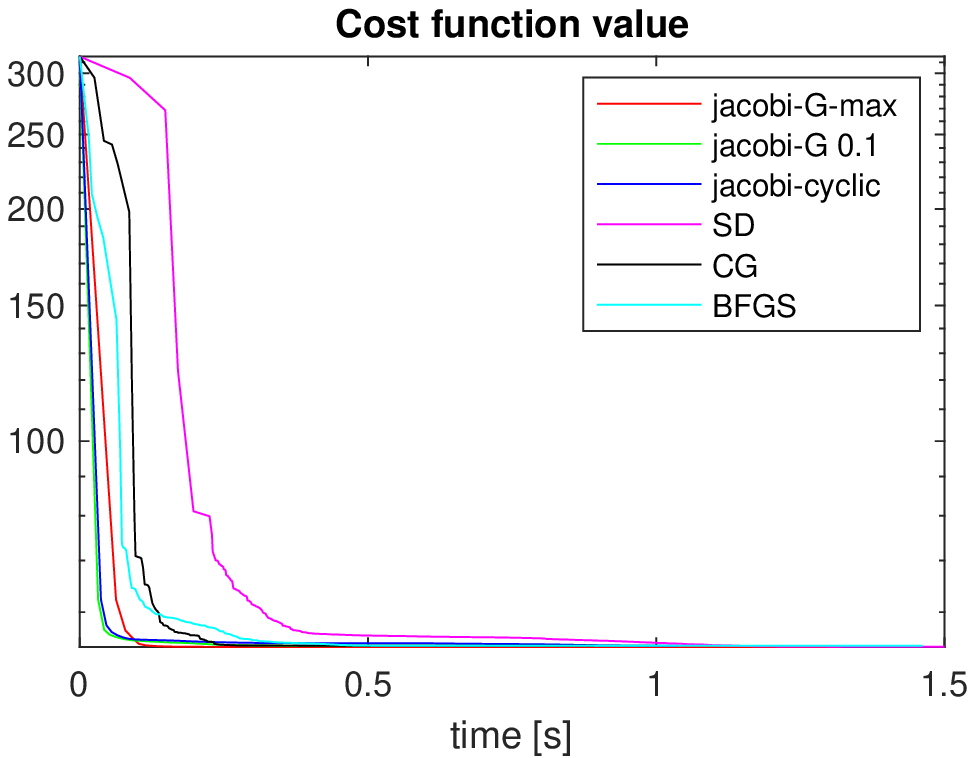}}
\end{minipage}
\hfill
\begin{minipage}[b]{0.49\linewidth}
 \centering
 {\includegraphics[height=4cm]{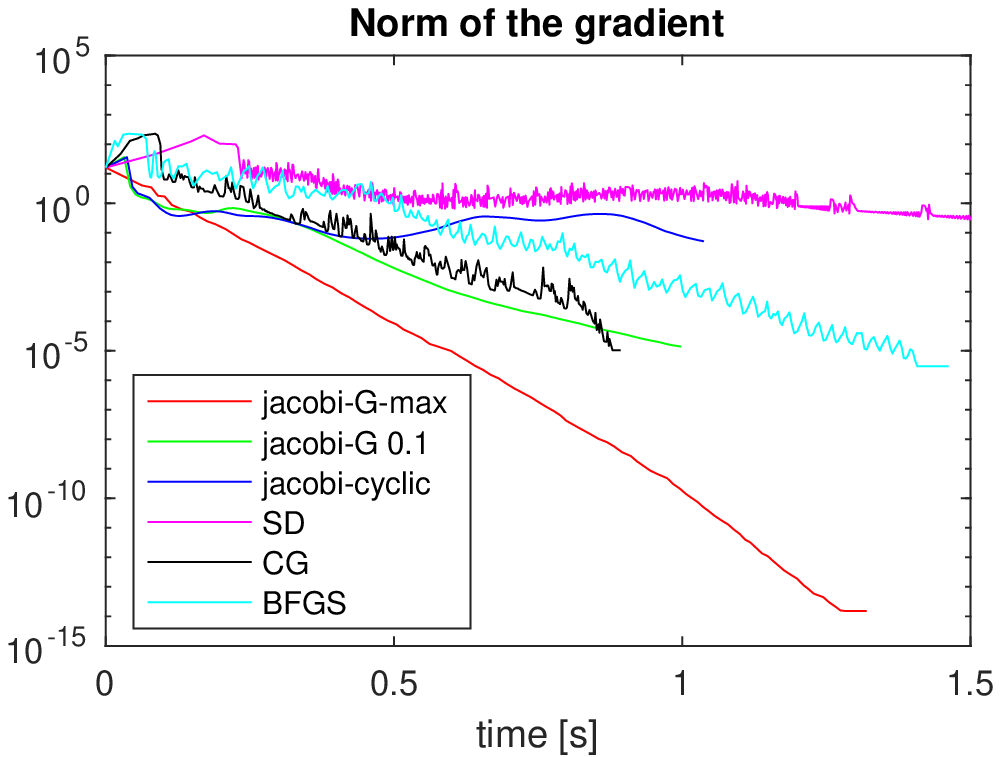}}
\end{minipage}
\caption{Cost function  value (left)  and norm of the gradient   (right), Example 1}
\label{fig:randA_comparison}
\end{figure}

We do not expect this example to be easy for all of methods: this example is far from a diagonalizable, and we are not likely to be in a small neighborhood of a local extremum.
We see that the Jacobi-type methods converge very fast, and for the versions of \Cref{Jacobi-G-general} the gradient seems to converge to zero. 
We also see that the Jacobi-G-max version is the best compared to Jacobi-G with cyclic order and fixed $\delta$ (we tried different values of $\delta$).

We also consider a nearly diagonalizable case, $n=L = 20$.
We take $\matr{A}^{(\ell)} = \matr{Q}^{\H} \matr{D}^{(\ell)} \matr{Q} + \matr{E}^{(\ell)}$, where $\matr{Q}$ is a random unitary matrix, elements of $\matr{E}^{(\ell)}$ are i.i.d. realizations of Gaussian random variable with standard deviation $10^{-6}$, and $\matr{D}^{(\ell)}$ is a diagonal matrix, whose diagonal elements are equal to $1$, except the element ${D}^{(\ell)}_{\ell,\ell} = 2$ (note that such matrices, without noise, satisfy \Cref{prop:diag_neg_def}).
\begin{figure}[htb!]
\begin{minipage}[b]{0.49\linewidth}
 \centering
 {\includegraphics[height=4cm]{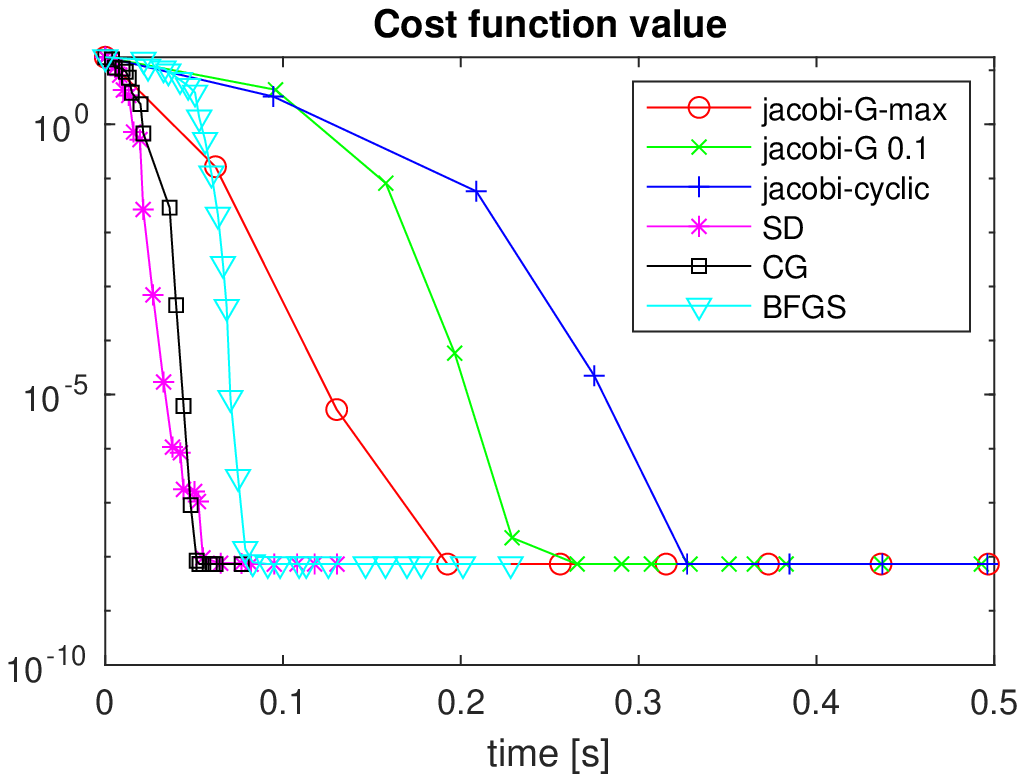}}
\end{minipage}
\hfill
\begin{minipage}[b]{0.49\linewidth}
 \centering
 {\includegraphics[height=4cm]{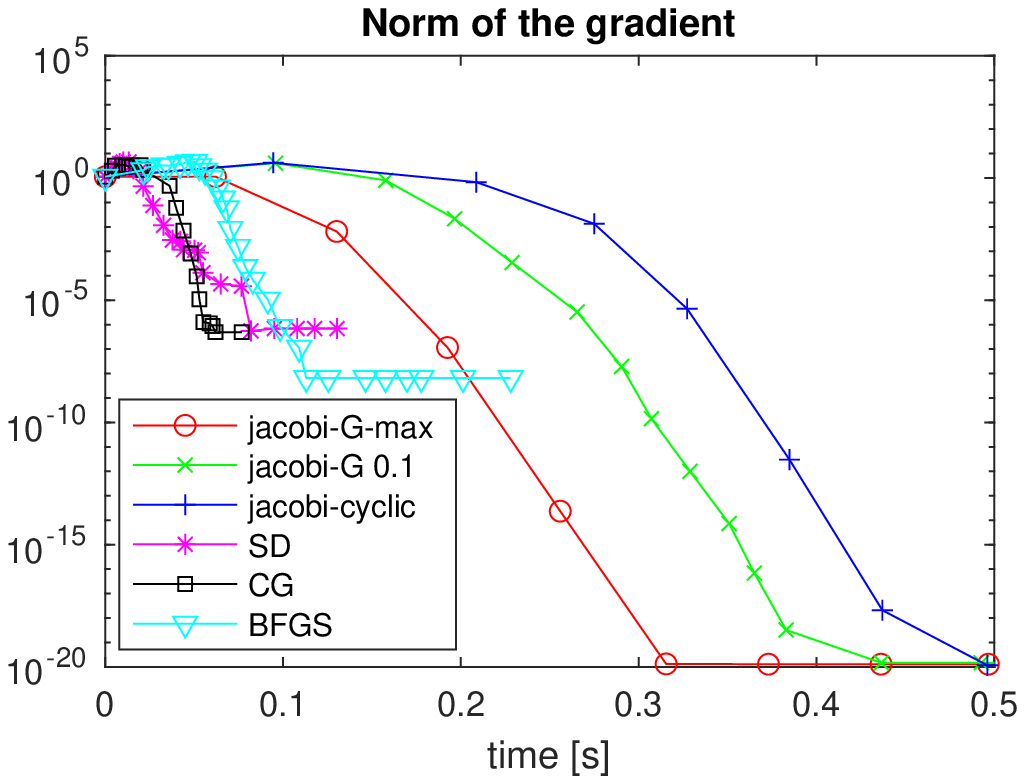}}
\end{minipage}
\caption{Cost function  value (left) and norm of the gradient  (right),  Example 2}
\label{fig:diag_comparison}
\end{figure}

We plot the results in \Cref{fig:diag_comparison}. 
We see that the convergence of  general-purpose Riemannian algorithms is much better in this case.
Still, Jacobi algorithms converge in a  few sweeps. 

Note that in the current implementation (used to produce \Cref{fig:randA_comparison} and \Cref{fig:diag_comparison}), we do not use the $O(Ln)$ update of $\matr{\Lambda}(\matr{U})$ as suggested in \cref{sec:impl_complexity} (i.e., the matrix  $\matr{\Lambda}(\matr{U})$ is recalculated at each step).
This can be observed in \Cref{fig:diag_comparison}, where each marker for the Jacobi-type methods corresponds to one sweep.
Thus, a further speedup of Jacobi-type methods is possible.

{
\section{Discussion}
In this paper, we showed that for a class of optimization problems on the unitary group (corresponding to approximate matrix and tensor diagonalization), convergence of Jacobi-type algorithms to stationary points can be proved (together with  convergence rates).
A gradient-based order of Givens rotations is adopted (which extends the approach of \cite{IshtAV13:simax} for the real-valued case).
By using the tools based on \L{}ojasiewicz gradient inequality, we can ensure single-point convergence, under regularity conditions on one of  the accumulation points;
the speed of convergence is linear for the  non-degenerate case, and local convergence can be proved.
We also provided a characterization of Jacobi rotations for tensors of arbitrary orders.

Still, we believe that stronger results can be obtained.
For the matrix case,  although  the Jacobi-type algorithms are similar in spirit to block-coordinate descent, they enjoy quadratic convergence  (of the cost function value)   for the classic matrix case \cite{GoluV96:jhu}  and the case of a pair of commuting matrices \cite{BunsBM93:simax}. 

Also, in the matrix case, many results are available for cyclic strategies (at least weak convergence is known, see \cite{GoluV96:jhu}).
It would be interesting to see if similar results can be proved for  tensor and joint matrix diagonalization cases;
in fact, the convergence for the pure cyclic strategy is often observed in practice (see \cite{LUC2018} for a comparison in the case of orthogonal group), but there is no convergence proof.

Note that we were not able to prove global single-point convergence, as in \cite{LUC2018} (proved for 3rd order tensors or matrices).
It seems that in the complex case, not only the order of rotations matters (which makes it similar to the higher-order case \cite{LUC2018}).
One possible track is to modify of a way to find the Jacobi rotation itself (i.e. adopt proximal-like steps if needed, see also \cite{LUC2018}).

Another interesting question is whether we can relax the definition of single-point convergence. 
Indeed, if the critical point is degenerate (even in the quotient manifold), then a natural question is whether the potentially different accumulation points, belong the same critical manifold.
This is, in fact, what is typically proved\footnote{In fact, it seems that \cite{LUC2018} is the first paper explicitly showing single-point convergence for the single real-valued matrix case, as the result of \cite{LUC2018}  also apply to the eigenvalue problems.} for the matrix case \cite{Drma10:simax}: if there are multiple eigenvalues, then the convergence of invariant subspaces is guaranteed (which corresponds to the same critical manifold).

\appendix
\section{Multilinear algebra proofs}\label{sec:ml_proofs}

\begin{proof}[Proof of \Cref{prop:equivalence_cost_functions}]
The ``only if'' part follows from the fact that if $\tens{B}$ is Hermitian,  there exist tensors $\tens{A}^{(1)},\ldots,\tens{A}^{(L)}$ of order $d$ and real numbers $\alpha_\ell$, such that
\begin{equation}\label{eq:hermitian_spectral}
\tens{B} = \sum\limits_{\ell=1}^{L}  \alpha_\ell (\tens{A}^{(\ell)})^* \otimes \tens{A}^{(\ell)}, \text{ i.e. }\tenselem{B}_{i_1\ldots i_dj_1\ldots j_d} = \sum\limits_{\ell=1}^{L}  \alpha_\ell  (\tenselem{A}^{(\ell)}_{i_1\ldots i_d})^*  \tenselem{A}^{(\ell)}_{j_1\ldots j_d}.
\end{equation}
This  is nothing but the spectral theorem for Hermitian matrices applied to a matricization  of $\tens{B}$, see also \cite[Propositions 3.5 and 3.9]{JianL16:characterizing}.
Then \eqref{eq:hermitian_spectral} implies that 
\[
g_{\tens{B}, d}(\vect{u}) = \sum\limits_{\ell=1}^L  \alpha_\ell  | g_{\tens{A}^{(\ell)}, 0}(\vect{u}) |^2.
\]
In order to prove the ``if'' part, we make the following remarks.
\begin{enumerate}[(a)]
\item When restricted to $\|\vect{u}\| = 1$, the order of the form can be always increased; 
Indeed, suppose that $\tens{T}$ is $2(d-1)$-order Hermitian, then for all $\|\vect{u}\| = 1$
\[
g_{\tens{T}, d-1}(\vect{u}) = 
(\tens{T} \otimes \matr{I}_n) \contr{1} \vect{u}^{*} \cdots \contr{d-1} \vect{u}^{*} \contr{d} \vect{u} \cdots  \contr{2(d-1)} \vect{u}\contr{2d-1} \vect{u}^* \contr{2d} \vect{u},
\]
where $\matr{I}_n$ is the identity; the expression on the right-hand side is a Hermitian form \eqref{eq-hermitian-form}, where the   $2d$-order tensor $\tens{B}$ can be defined by permuting the indices:
\[
\tenselem{B}_{i_1 \ldots i_d j_1 \ldots j_d} =(\tens{T} \otimes \matr{I}_n)_{i_1\ldots i_{d-1}j_1\ldots j_{d-1}i_dj_d} =  \tenselem{T}_{i_1\ldots i_{d-1}j_1\ldots j_{d-1}} (\matr{I}_n)_{i_dj_d}.
\] 
\item Note that for any $t$, the function $|g_{\tens{A}, t}(\vect{u})|^2 =  g_{\tens{A}, t}(\vect{u})g^*_{\tens{A}, t}(\vect{u})$ is also a $2d$-form: 
\begin{equation}\label{eq-g-A-squared}
|g_{\tens{A}, t}(\vect{u})|^{2} =  (\tens{A} \otimes \tens{A}^*)\contr{1} \vect{u}^{*} \cdots \contr{ t} \vect{u}^{*} \contr{ t+1} \vect{u} \cdots   \contr{d+ t} \vect{u} \contr{d+ t+1} \vect{u}^* \cdots  \contr{2d} \vect{u}^*,  
\end{equation}
which can be written\footnote{An alternative shorter proof of  part (b) follows from the fact that  a $2d$-order form $g_{\tens{B}, d}(\vect{u})$ is real-valued if and only if it is Hermitian, see \cite[Proposition 3.6]{JianL16:characterizing}} as  $g_{\tens{B}, d}(\vect{u})$ for a tensor $\tens{B}$ obtained by permuting  indices:
\[
\tenselem{B}_{i_1 \ldots i_d j_1 \ldots j_d} =
(\tens{A} \otimes \tens{A}^*)_{i_1\ldots i_t j_{t+1} \ldots j_d j_{1}\ldots j_{t} i_{t+1} \ldots i_d} = \tenselem{A}_{i_1\ldots i_t j_{t+1} \ldots j_d}\tenselem{A}^*_{ j_{1}\ldots j_{t} i_{t+1} \ldots i_d}.
\]
Finally, sums of Hermitian tensors are Hermitian, which completes the proof.
\end{enumerate}
\end{proof}

\begin{proof}[{Proof of \Cref{thm:elementary_updates}}]
Since the cost function has the form \eqref{eq-sum-simple-functions}, we have
\[
\widetilde{h}(c,s_1,s_2) = g_{\tens{T},d}(\left[\begin{smallmatrix} c\\ s^{\ast}\end{smallmatrix}\right]) +g_{\tens{T},d}(\left[\begin{smallmatrix} -s\\ c\end{smallmatrix}\right]) 
\]
Let us rewrite the first term using the double contraction:
\begin{align}
g_{\tens{T},d}(\left[\begin{smallmatrix} c\\ s^{\ast}\end{smallmatrix}\right]) & = 
\tens{T} \contr{1,d+1} \left(\begin{bmatrix} c\\ s^{*}\end{bmatrix}^{*} \begin{bmatrix} c& s^{*} \end{bmatrix} \right) \cdots \contr{d,2d} \left(\begin{bmatrix} c\\ s^{*}\end{bmatrix}^{*} \begin{bmatrix} c& s^{*} \end{bmatrix} \right) \notag \\
&= 
 \frac{1}{2^d}
\tens{T} \contr{1,d+1} (\matr{I}_2  + \matr{R}) \contr{2,d+2} (\matr{I}_2 +\matr{R}) \cdots \contr{d,2d} (\matr{I}_2 +\matr{R}),\label{eq:2x2_first}
\end{align}
\[
 \text{where }\matr{R} \eqdef \begin{bmatrix} 2c^2-1& 2cs^{\ast}\\2cs & 1-2c^2\end{bmatrix}, \quad \text{so that }
 \begin{bmatrix} c\\ s^{*}\end{bmatrix}^{*} \begin{bmatrix} c& s^{*} \end{bmatrix}= 
\begin{bmatrix} c^2& cs^{\ast}\\cs & |s|^2\end{bmatrix}
= \frac{1}{2} ( \matr{I}_2 + \matr{R}).
\]
Similarly, by noting that
\[
\begin{bmatrix} -s\\ c\end{bmatrix}^{*} \begin{bmatrix} -s& c \end{bmatrix}= 
\begin{bmatrix} |s|^2& -cs^{*}\\-cs & c^2\end{bmatrix}
= \frac{1}{2} ( \matr{I}_2 -\matr{R}),
\]
we can rewrite the second term
\begin{equation}\label{eq:2x2_second}
 g_{\tens{T},d}(\left[\begin{smallmatrix} -s\\ c\end{smallmatrix}\right]) = 
 \frac{1}{2^d}
\tens{T} \contr{1,d+1} (\matr{I}_2  - \matr{R}) \contr{2,d+2} (\matr{I}_2 -\matr{R}) \cdots \contr{d,2d} (\matr{I}_2 -\matr{R}).
\end{equation}
When summing \eqref{eq:2x2_first} and \eqref{eq:2x2_second}, we note that the odd powers of $\matr{R}$ cancel, and  the even powers have positive signs; therefore, due to symmetries we get
\begin{equation*}
\widetilde{h}(c,s_1,s_2) = \frac{1}{2^{d-1}} \sum\limits_{j=0}^{m} 
{d \choose {2j}} \tens{T} \contr{1,d+1} \matr{R} \cdots \contr{2j,d+2j}  \matr{R}   \contr{2j+1,d+2j+1} \matr{I}_2 \cdots \contr{d,2d}  \matr{I}_2,
\end{equation*}
where the binomial coefficient appears  when we sum over all possible locations of $\matr{R}$.

Next, we remark that $\matr{R}$ can be expressed in the following orthogonal basis 
\begin{equation}\label{eq:basis_matr_R}
\matr{R} = r_1 \bsmallmatrix{1 & 0\\0&-1} + r_2 \bsmallmatrix{0 & -1\\-1&0} + r_3   \bsmallmatrix{0 & i\\-i&0},
\end{equation}
where $\vect{r} = \begin{bmatrix} r_1 &  r_2 & r_3\end{bmatrix}^{\T}  = \begin{bmatrix} 2c^2-1 &  -2cs_1 & -2cs_2\end{bmatrix}^{\T}$ is defined in \eqref{eq:w_definition}.
Then by the multilinearity of the contractions, we can rewrite the expression for $\widetilde{h}(c,s_1,s_2)$ as
\[
\widetilde{h}(c,s_1,s_2) =  \sum\limits_{j=0}^{m} \tens{F}^{(j)} \contr{1}  \vect{r} \cdots  \contr{2j}  \vect{r},
\]
where each $\tens{F}^{(j)}$ is a symmetric complex $2j$-order  $3\times \cdots \times 3$ tensor, whose entries are obtained by contractions  of $\tens{T}$ with basis matrices in \eqref{eq:basis_matr_R} or  $\matr{I}_2$.

It is only left to show that all the elements in each of the tensors $\tens{F}^{(j)}$ are real. 
This is indeed the case, because for a Hermitian tensor $\tens{T}$ contraction with 
one of  the basis  matrices keeps it  Hermitian:
\begin{align*}
& (\tens{T} \contr{1,d+1} \bsmallmatrix{0 & i\\-i&0})_{i_2\ldots i_d, j_2 \ldots j_d} =
-i(\tenselem{T}_{2i_2\ldots i_d,1 j_2 \ldots j_d} - \tenselem{T}_{1i_2\ldots i_d,2 j_2 \ldots j_d}) \\
&=  i(\tenselem{T}^{*}_{2 j_2 \ldots j_d,1i_2\ldots i_d } - \tenselem{T}^{*}_{1 j_2 \ldots j_d,2i_2\ldots i_d} )  = 
 ((\tens{T} \contr{1,d+1} \bsmallmatrix{0 & i\\-i&0} )_{j_2 \ldots j_d,i_2\ldots i_d})^*,
\end{align*}
and similarly for contractions with $\bsmallmatrix{1 & 0\\0&-1}$, $\bsmallmatrix{0 & -1\\-1&0}$ and $\matr{I}_2$.

Finally, we note that, since $\|\vect{r}\| =1$, all the tensors $\tens{F}^{(j)}$ can be combined in one tensor $\tens{F}$ of order $2m$, as in  the proof of \Cref{prop:equivalence_cost_functions} (see part (a) of the proof).
\end{proof}

\section*{Acknowledgments} 
The authors would like to acknowledge  the two anonymous reviewers and the associate editor for their useful remarks that helped to improve the presentation of the results.

\nocite{edelman1998geometry}
\nocite{helgason1978}

\bibliographystyle{siamplain}
\bibliography{Jacobi_G_Unitary_Group}

\end{document}